\setlist[enumerate]{topsep=1ex, itemsep=-0.5ex, parsep=1.5ex}
\setlist[itemize]{topsep=1ex, itemsep=-0.5ex, parsep=1.5ex}
\newtheorem{Thm}{Theorem}[section]
\newtheorem{Lem}[Thm]{Lemma}
\newtheorem{Prop}[Thm]{Proposition}
\newtheorem{Cor}[Thm]{Corollary}
\theoremstyle{definition}
\newtheorem{Def}[Thm]{Definition}
\newtheorem{Rem}[Thm]{Remark}
\theoremstyle{remark}
\newcommand\tab{\;\;\;\;\;\;}
\newcommand{\C}{\mathbb{C}}
\newcommand{\Z}{\mathbb{Z}}
\newcommand{\BO}{\mathbb{O}}
\newcommand{\BV}{\mathbb{V}}
\newcommand{\CM}{\mathcal{M}}
\newcommand{\CW}{\mathcal{W}}
\newcommand{\CA}{\mathcal{A}}
\newcommand{\CB}{\mathcal{B}}
\newcommand{\CH}{\mathcal{H}}
\newcommand{\CHC}{\mathcal{HC}}
\newcommand{\CZ}{\mathcal{Z}}
\newcommand{\CT}{\mathcal{T}}
\newcommand{\CN}{\mathcal{N}}
\newcommand{\CU}{\mathcal{U}}
\newcommand{\CP}{\mathcal{P}}
\newcommand{\SP}{\mathscr{P}}
\def \a{\alpha}
\def \b{\beta}
\newcommand{\w}{\omega}
\newcommand{\g}{\mathfrak{g}}
\newcommand{\fh}{\mathfrak{h}}
\newcommand{\n}{\mathfrak{n}}
\newcommand{\m}{\mathfrak{m}}
\newcommand{\upsi}{\underline{\psi}}
\newcommand{\uP}{\underline{P}}
\newcommand{\ulambda}{\underline{\lambda}}
\newcommand{\fb}{\mathfrak{b}}
\def \x{\times}
\def \<{\langle}
\def \>{\rangle}
\def\({\left(}
\def  \){\right)}
\newcommand{\HC}{\textnormal{HC}}
\newcommand{\Spec}{\textnormal{Spec}}
\newcommand{\Hom}{\textnormal{Hom}}
\newcommand{\End}{\textnormal{End}}
\newcommand{\pr}{\textnormal{pr}}
\newcommand{\bim}{\textnormal{-bim}}
\newcommand{\fin}{\textnormal{fin}}
\newcommand{\ad}{\textnormal{ad}}
\newcommand{\gr}{\textnormal{gr}}
\newcommand{\reg}{\textnormal{reg}}
\newcommand{\Eq}[1]{\begin{align}#1\end{align}} 
\newcommand{\Eqn}[1]{\begin{align*}#1\end{align*}}  
\tikzset{->-/.style={decoration={
  markings,
  mark=at position .5 with {\arrow{latex}}},postaction={decorate}}}
\tikzset{
    >=latex
    }
\author{Trung Vu}
\title{On the functor relating  Harish-Chandra bimodules and  Soergel bimodules}
\address{Department
of Mathematics, Yale University, New Haven CT USA}
\email{trung.vu@yale.edu}
\begin{document}
\begin{abstract}In the 90's  Soergel constructed a functor that relates Harish-Chandra bimodules to Soergel bimodules. We revisit this functor and relate it to the restriction functor constructed by Losev between Harish-Chandra bimodules and bimodules over $W$-algebra associated to the regular nilpotent element. We compute the images of certain Harish-Chandra bimodules under the restriction functor and provide alternative proofs for many properties of the functor constructed by Soergel. Blocks of Harish-Chandra bimodules with integral central characters were studied in the works of Soergel and Stroppel. We will generalize results of Soergel and Stroppel in the case of integral blocks to  general blocks.
\end{abstract}

\maketitle
\section{Introduction}
In this paper the base field is $\C$. Let $\g$ be a semisimple Lie algebra, $G$ be its simply connected algebraic group, and $U(\g)$ be the universal enveloping algebra. For any $U(\g)$-bimodule $M$, we have the adjoint $\g$-action defined by $\ad_x m=xm-mx$ for any $x \in \g$ and $m \in M$.   A $U(\g)$-bimodule is called \emph{Harish-Chandra} if it is finitely generated as a bimodule and its adjoint $\g$-action is locally finite, i.e., every $m \in M$ is contained in a finite dimensional $\g$-stable subspace. Harish-Chandra bimodules  were first introduced by Harish-Chandra to  study the representations of complex algebraic groups viewed as  real reductive groups. The Harish-Chandra bimodules are also closely related to many other important objects in Representation theory such as category O \cite{BG80} or finite W-algebras \cite{IL11}. 


Let $Z$ be the center of $U(\g)$. In \cite{So92}, Soergel constructed a functor from the category of Harish-Chandra bimodules to the category of bimodules over $Z$. Statements about Harish-Chandra bimodules then can be translated into statements about $Z$-bimodules which are easier. In this paper, we will interpret Soergel's functor as the special case of the restriction functor in \cite{IL11} associated to the regular nilpotent element. We will give alternative ways to prove many properties of the functor in \cite{So92} by means of the restriction functor in \cite{IL11}. Furthermore, we will treat all blocks of the category of Harish-Chandra bimodules, including non-integral blocks which are not treated in \cite{So92} and \cite{STL}. Our main motivation behind this paper  is to use a similar approach  to study the Harish-Chandra bimodules for quantum groups at roots of unity. This should appear in the future work.


To continue, we will need some notations. Let $\fh$ be a Cartan subalgebra of $\g$. Let $\Lambda \subset \fh^*$ be the weight lattice and $\Lambda_r \subset \Lambda$ be the root lattice of $\g$. Let $\Phi$ be the root system of $\g$ and $\rho$ be half the sum of positive roots. The dot action of the  Weyl group $W$ on $\fh^*$ is defined by  $w \cdot \lambda=w (\lambda+\rho)-\rho$ for any $\lambda \in \fh^*, w \in W$. Recall the Harish-Chandra isomorphism $Z \cong \C[\fh^*]^{(W, \cdot)}$. For any $\chi \in \Spec Z$, let $\m_\chi$ be the maximal ideal of $\chi$ in $Z$. Let $Z^{\wedge_\chi}$ be the completion of $Z$ at the maximal ideal $\m_\chi$. Let $U^\chi:=U(\g) \otimes_Z Z^{\wedge_\chi}$. For any $\lambda \in \fh^*$, let $\chi_\lambda$ denote the image of $\lambda$ under the natural map $\fh^*\rightarrow \fh^*/(W, \cdot)$. Whenever we write $\lambda \in \chi$, we view $\chi$ as a dot $W$-orbit in $\fh^*$. Let $\HC(U)$ denote the category of Harish-Chandra bimodules. Let $R:=\C[\fh^*]^{\wedge_0}$ be the completion of $\C[\fh^*]$ at the point $0 \in \fh^*$. 


\textbf{Completed Harish-Chandra bimodules:} Let $\chi, \chi' \in \Spec Z$. A  $(U^\chi, U^{\chi'})$-bimodule is called  \emph{Harish-Chandra} if it is finitely generated as a left module and its adjoint $\g$-action is locally finite. Note that for a $(U^\chi, U^{\chi'})$-bimodule with locally finite adjoint action of $\g$, being finitely generated as a left module is equivalent to being finitely generated as a right module, see Proposition $\ref{prop: equivalent axioms}$. Let $\HC(U^{\chi, \chi'})$ be the full subcategory of the category of $(U^\chi, U^{\chi'})$-bimodules consisting of all Harish-Chandra bimodules. 

Two points $\chi, \chi'$ in $\Spec Z$ are called \emph{compatible} if there are $\lambda, \lambda' \in \fh^*$ such that $\lambda-\lambda' \in \Lambda$ and $\chi=\chi_\lambda, \chi'=\chi_{\lambda'}$. Then the category $\HC(U^{\chi, \chi'})$ is zero unless $\chi$ and $\chi'$ are compatible, see Remark \ref{rem HCbim is zero}. A weight $\lambda \in \fh^*$ is \emph{dominant} if $\< \lambda+\rho, \a^\vee\> \neq -1, -2, \dots$ for each positive root $\a$. When $\chi$ and $\chi'$ are compatible, we can find a non-unique pair of dominant weights $(\mu, \lambda)$ such that $\mu \in \chi, \lambda \in \chi'$ and $\lambda -\mu \in \Lambda$. To such pair of dominant weights $(\mu, \lambda)$, we will introduce  the \emph{translation bimodule} $P^{\mu, \lambda}$ in $\HC(U^{\chi, \chi'})$, see Definition \ref{def translation bimod}.


Let $C(G)$ be the center of $G$ and $\mathfrak{X}$ be the group of characters of $C(G)$. Let $Z\bim^{C(G)}$ be the category of $\mathfrak{X}$-graded $Z$-bimodules. Let $(Z^{\wedge_\chi}, Z^{\wedge_{\chi'}})\bim^{C(G)}$ be the category of $\mathfrak{X}$-graded $(Z^{\wedge_\chi}, Z^{\wedge_{\chi'}})$-bimodules. For a nilpotent orbit $O \subset \g$, Losev in \cite{IL11} constructed a functor $\bullet_\dag$ from $\HC(U)$ to a suitable category of equivariant bimodules over the $W$-algebra associated to $O$. In the case when $O$ is principal, the $W$-algebra is identified  with the center $Z$, and the functor becomes 
\[ \bullet_\dag: \HC(U)\rightarrow Z\bim^{C(G)}.\]
One can extend this functor to the functor
\begin{align}\label{restriction functor 1}\bullet_\dag: \HC(U^{\chi, \chi'})\rightarrow (Z^{\wedge_\chi}, Z^{\wedge_{\chi'}})\bim^{C(G)},
\end{align}
see Section \ref{ssec: complete-restriction-functor} for details. Let $(\mu, \lambda)$ be a pair of dominant weights such that $\mu \in \chi, \lambda \in \chi', \mu-\lambda \in \Lambda$.  Let $W_\mu, W_\lambda$ be the stabilizers of $\mu,\lambda $ under the dot $W$-action, respectively. Then there are algebra isomorphisms $\epsilon_\mu: Z^{\wedge_\chi}\cong R^{W_\mu}$ and $ \epsilon_\lambda: Z^{\wedge_{\chi'}}\cong R^{W_\lambda}$ as in \eqref{identify complete Z}. Suppose $W_\lambda \subset W_\mu$, then we will have an algebra embedding
\begin{equation}\label{eq Z inclusion} Z^{\wedge_\chi} \xrightarrow[]{\epsilon_\mu} R^{W_\mu} \hookrightarrow R^{W_\lambda} \xrightarrow[]{\epsilon^{-1}_\lambda} Z^{\wedge_{\chi'}}.
\end{equation}
This algebra morphism makes $Z^{\wedge_{\chi'}}$ into a $(Z^{\wedge_\chi}, Z^{\wedge_{\chi'}})$-bimodule in an obvious way. Similarly, if $W_\mu \subset W_\lambda$ then $Z^{\wedge_{\chi}}$ becomes a $(Z^{\wedge_\chi}, Z^{\wedge_{\chi'}})$-bimodule. Now we can state the first main result of this paper:
\begin{Thm}\label{Thm1}The functor $\bullet_\dag$ in $(\ref{restriction functor 1})$ has the following properties:
\begin{enumerate}[label=\alph*)]
\item  \label{Thm1a}It is exact and monoidal.
\item  \label{Thm1b} Let $P$ be any projective object in $\HC(U^{\chi, \chi'})$, and $M$ be any object in $\HC(U^{\chi, \chi'})$.  The following map is bijective:
\[ \Hom_{\HC(U^{\chi, \chi'})}(M,P)\rightarrow \Hom_{(Z^{\wedge_\chi}, Z^{\wedge_{\chi'}})\bim^{C(G)}}(M_\dag, P_\dag).\]
\item  \label{Thm1c} Suppose $W_\lambda \subset W_\mu$ then $P^{\mu, \lambda}_\dag\cong Z^{\wedge_{\chi'}}$ as $(Z^{\wedge_\chi}, Z^{\wedge_{\chi'}})$-bimodules, here $Z^{\wedge_{\chi'}}$ is equipped with the bimodule structure as above . Similarly, if $W_\mu \subset W_\lambda$ then $P^{\mu, \lambda}_\dag\cong Z^{\wedge_\chi}$.
\end{enumerate}
\end{Thm}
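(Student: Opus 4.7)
The plan is to prove the three parts in the order (a), (c), (b), reducing (c) and (b) to (a) together with a direct computation on generators. For part (a), I would inherit exactness and monoidality from Losev's functor $\bullet_\dag : \HC(U) \to Z\bim^{C(G)}$ on the non-completed category. The category $\HC(U^{\chi,\chi'})$ arises as the completion of the $(\chi,\chi')$-generalized block of $\HC(U)$, and since completion of finitely generated modules over a Noetherian ring is exact, exactness of the completed $\bullet_\dag$ follows. For monoidality, given $M \in \HC(U^{\chi,\chi''})$ and $N \in \HC(U^{\chi'',\chi'})$, the tensor product $M \otimes_{U^{\chi''}} N$ identifies with the completion of the uncompleted tensor product, so the natural monoidal isomorphism $(M \otimes N)_\dag \cong M_\dag \otimes N_\dag$ descends from the non-completed case.

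For part (c), the strategy is to express $P^{\mu,\lambda}$ in terms of simpler bimodules. Following Definition \ref{def translation bimod}, $P^{\mu,\lambda}$ should arise, up to the projection $\pr_{\chi'}$ to the generalized central character on one side, as a summand of $U^\chi \otimes_\C V(\lambda-\mu)$, where $V(\lambda-\mu)$ is the finite-dimensional simple $\g$-module with extremal weight $\lambda-\mu$. Since $\bullet_\dag$ is monoidal by (a) and sends the unit $U^\chi$ to $Z^{\wedge_\chi}$, tensoring with a finite-dimensional $\g$-module produces a free $R$-module whose weight decomposition is explicit. Applying $\pr_{\chi'}$ and the identifications $\epsilon_\mu : Z^{\wedge_\chi} \cong R^{W_\mu}$, $\epsilon_\lambda : Z^{\wedge_{\chi'}} \cong R^{W_\lambda}$, the relevant weight space is identified with $R^{W_\lambda}$ when $W_\lambda \subset W_\mu$, and the left $Z^{\wedge_\chi}$-action factors through the composition in (\ref{eq Z inclusion}). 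The symmetric case $W_\mu \subset W_\lambda$ follows by swapping roles.

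For part (b), I would use the fact that projectives in $\HC(U^{\chi,\chi'})$ are direct summands of translation bimodules $P^{\mu,\lambda}$, a standard consequence of block theory via translation functors, so it suffices to verify the Hom bijection for $P = P^{\mu,\lambda}$. Via the adjunction between translation functors and projection-to-central-character functors, $\Hom_{\HC(U^{\chi,\chi'})}(M, P^{\mu,\lambda})$ is computed as an explicit invariant subspace of $M$ determined by $\lambda - \mu$ and $W_\lambda$. By (c), the target side computes the same invariant as the appropriate graded component of $\Hom_{(Z^{\wedge_\chi}, Z^{\wedge_{\chi'}})\bim^{C(G)}}(M_\dag, Z^{\wedge_{\chi'}})$. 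Verifying that the natural comparison map is a bijection reduces to unwinding both constructions and checking they coincide.

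The main obstacle lies in part (c): precise verification that Losev's functor commutes with tensoring by finite-dimensional $\g$-modules and with the projection $\pr_{\chi'}$, and that the induced bimodule structure matches (\ref{eq Z inclusion}) exactly. This requires careful tracking of the $\mathfrak{X}$-grading together with the two distinct $R$-module structures mediated by $W_\mu$ and $W_\lambda$, and it is on this computation that parts (b) and ultimately (c) itself rest.
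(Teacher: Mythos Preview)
Your proposal has genuine gaps in parts (c) and (b).

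For part (c): monoidality of $\bullet_\dag$ concerns tensor products over $U^\chi$, i.e.\ $(M\otimes_{U^\chi} N)_\dag\cong M_\dag\otimes_{Z^{\wedge_\chi}} N_\dag$. But $P^{\mu,\lambda}$ is a summand of $V\otimes_{\C} U^{\chi'}$ with tensor over $\C$, not over $U^\chi$, so monoidality gives you nothing here. There is no obvious way to compute $(V\otimes_\C U^{\chi'})_\dag$ from the formal properties you list; indeed the paper never does this directly. Instead the paper computes $P^{\mu,\lambda}_\dag$ by two separate ingredients: (i) the left and right $Z^{\wedge_\chi}$-actions on $P^{\mu,\lambda}$ coincide, proved via the deformed category $\hat{O}$ by showing that $P^{\mu,\lambda}\otimes_{U^{\chi'}}\hat\Delta(\lambda)\cong\hat\Delta(\mu)$ and that $\End(P^{\mu,\lambda})\hookrightarrow\End(\hat\Delta(\mu))$; and (ii) the fiber $P^{\mu,\lambda}_\dag/P^{\mu,\lambda}_\dag\m_{\chi'}$ is one-dimensional, proved using Losev's companion functor $\bullet_\dag$ on category $O$ together with $\Delta(\mu)_\dag\cong\C$. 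Combined with torsion-freeness this forces $P^{\mu,\lambda}_\dag\cong Z^{\wedge_{\chi'}}$. Your ``identify the relevant weight space'' step is where the real work lies, and it requires these category-$O$ inputs.

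For part (b): you assume every projective is a summand of a translation bimodule $P^{\mu,\lambda}$. This is \emph{not} available a priori; the paper only knows that projectives are summands of $P^{\chi,\chi'}_V$ for arbitrary finite-dimensional $V$. That every projective is a summand of tensor products of translation bimodules is in fact a \emph{consequence} of Theorem~\ref{Thm1} (it is established in the proof of Theorem~\ref{Thm4}), so you cannot invoke it here. The paper's argument for (b) is quite different: it shows that for $P=P^{\chi,\chi'}_V$ the adjunction unit $V\otimes U(\g)/\m_{\chi'}^k\to((V\otimes U(\g)/\m_{\chi'}^k)_\dag)^\dag$ is an isomorphism, using that the associated graded is free over $\C[\CN]$ and that $\CN$ is normal with boundary of codimension~$2$. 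Passing to the inverse limit over $k$ yields the Hom bijection. Your proposed adjunction argument, besides resting on the unavailable claim about projectives, is also too vague to be assessed.

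Part (a) is roughly in the right spirit, though the paper's exactness argument is more delicate than ``completion is exact'': one must control $\varprojlim K_{k,\dag}$ for the kernels $K_k$ arising from reduction mod $\m_{\chi'}^k$, and this uses a representability lemma for the functor $\bullet_\dag$ on finite-length blocks.
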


We will compare the construction in this theorem with the construction of Soergel in Section $1.3$. Let us make comments about the proof of Theorem $\ref{Thm1}$. Part $\ref{Thm1a}$ and $\ref{Thm1b}$ follow from the properties of the restriction functor $\bullet_\dag$ in \cite{IL11}. Let us sketch a proof of the first statement in part $\ref{Thm1c}$. The proof of the second statement is similar.  For any $M$ in $\HC(U^{\chi'', \chi})$ and any $N$ in $\HC(U^{\chi, \chi'})$, we have a functorial isomorphism $(M\otimes_{U^\chi} N)_\dag \cong M_\dag\otimes_{Z^{\wedge_\chi}} N_\dag$. Since $W_\lambda \subset W_\mu$,  we have the map $Z^{\wedge_\chi}\hookrightarrow Z^{\wedge_{\chi'}}$ as in \eqref{eq Z inclusion}. Therefore, the right $Z^{\wedge_{\chi'}}$-action on $P^{\mu, \lambda}$ gives us a right $Z^{\wedge_\chi}$-action on $P^{\mu, \lambda}$. The image $P^{\mu, \lambda}_\dag$ is computed by  the following two observations: The left $Z^{\wedge_\chi}$-action and the right $Z^{\wedge_\chi}$-action on $P^{\mu, \lambda}$ coincide, and $P^{\mu, \lambda}_\dag/ P^{\mu,\lambda}_\dag \m_{\chi'} \cong \C$. These observations will be proved by using the relations between the category $\HC(U^{\chi, \chi'})$ and the category $O$ as well as its deformation $\hat{O}$; see Proposition \ref{prop image of complete tranbimod} and Lemma \ref{lem image of complete tranbimod}. 

Note that by identifications $\epsilon_\mu: Z^{\wedge_\chi}\cong R^{W_\mu}$ and $ \epsilon_\lambda: Z^{\wedge_{\chi'}}\cong R^{W_\lambda}$, we can view the functor \eqref{restriction functor 1} as follows:
\[ \bullet_\dag: \HC(U^{\chi, \chi'}) \rightarrow (R^{W_\mu}, R^{W_\lambda})\bim^{C(G)}.\]

\subsection{Non-integral cases}

Let $\CP$ denote the subcategory of all projective objects in  $\HC(U^{\chi, \chi'})$. Let us fix a pair of dominant weights $(\mu, \lambda)$ such that $\chi=\chi_\mu, \chi'=\chi_\lambda,$ and $\mu-\lambda \in \Lambda$ as above. To such pair of dominant weights, we will introduce a full Karoubian additive subcategory $\CB_{\mu, \lambda}$ of the category $(R^{W_\mu}, R^{W_\lambda})\bim^{C(G)}$. The category $\CB_{\mu, \lambda}$ is just a variant of  the category of singular Soergel bimodules in \cite{GW} that we will formally define in Section $1.1.2$. Let $\widehat{\CH}:=K^b(\CB_{\mu, \lambda})$, the bounded homotopy category of the additive category $\CB_{\mu, \lambda}$. Using Theorem $\ref{Thm1}$, we can deduce the following theorem 
\begin{Thm}\label{Thm4}The functor $\bullet_\dag$ in $(\ref{restriction functor 1})$ induces an equivalence of additive categories:
\[ \CP\cong \CB_{\mu, \lambda},\]
 and it also induces an equivalence of triangulated categories:
\[ D^b(\HC(U^{\chi,\chi'}))\cong \widehat{\CH}.\]
\end{Thm}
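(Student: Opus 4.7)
The plan is to prove the two equivalences in order, leveraging Theorem \ref{Thm1} as the main input. The first equivalence will be proved by showing that $\bullet_\dag$ restricted to $\CP$ is a fully faithful additive functor whose essential image is exactly $\CB_{\mu,\lambda}$, and the second will be deduced from the first via a standard ``$D^b \simeq K^b(\text{proj})$'' argument.

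For the additive equivalence $\CP \cong \CB_{\mu,\lambda}$, I would first argue that $\bullet_\dag$ sends $\CP$ into $\CB_{\mu,\lambda}$. The indecomposable projectives in $\HC(U^{\chi,\chi'})$ can be obtained as direct summands of iterated compositions of translation bimodules $P^{\mu,\nu}$ (passing through various intermediate blocks corresponding to $W$-stabilizers of different sizes). Since by Theorem \ref{Thm1}\ref{Thm1a} the functor $\bullet_\dag$ is monoidal, the image of such a composition is the tensor product of the corresponding images, and by Theorem \ref{Thm1}\ref{Thm1c} each of these images is either $R^{W_\mu}$, $R^{W_\nu}$, or a similar invariant ring viewed as a bimodule via an extension-of-scalars along an inclusion of singular invariant subrings. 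These are exactly the generating Bott--Samelson type objects for $\CB_{\mu,\lambda}$, so their tensor products, direct sums, and direct summands all lie in $\CB_{\mu,\lambda}$, which is Karoubian by definition. Full faithfulness on $\CP$ is immediate from Theorem \ref{Thm1}\ref{Thm1b} applied to $M, P$ both projective.

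For essential surjectivity onto $\CB_{\mu,\lambda}$, observe that $\CB_{\mu,\lambda}$ is by definition the Karoubian closure of the additive span of the generating singular Bott--Samelson bimodules listed above. Since $\bullet_\dag$ hits every such generator (as the image of an appropriate translation bimodule $P^{\mu,\nu}$ or composition thereof) and is fully faithful, idempotents in $\CB_{\mu,\lambda}$ transport to idempotents in $\CP$. Because $\HC(U^{\chi,\chi'})$ is an abelian category in which idempotents split (it is equivalent to modules over a finite-dimensional algebra after taking finitely generated $R^{W_\mu}$-equivariant pieces), the Karoubian envelope of the translation bimodules inside $\CP$ matches precisely with the Karoubian envelope of their images inside $\CB_{\mu,\lambda}$, yielding the desired equivalence.

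Passing to derived and homotopy categories, I would use the standard fact that if $\mathcal{A}$ is an abelian category with enough projectives in which every object admits a bounded projective resolution, then the natural functor $K^b(\CP) \to D^b(\mathcal{A})$ is an equivalence. For $\mathcal{A} = \HC(U^{\chi,\chi'})$ this finiteness of projective dimension follows from the identification of a block of Harish-Chandra bimodules with a finite-dimensional category of deformed category $\mathcal{O}$-type modules (this is the same circle of ideas used in the proof of Theorem \ref{Thm1}\ref{Thm1c} via the passage to $\hat{\mathcal{O}}$). Combined with the additive equivalence just established, this gives
\[ D^b(\HC(U^{\chi,\chi'})) \simeq K^b(\CP) \simeq K^b(\CB_{\mu,\lambda}) = \widehat{\CH}, \]
and exactness of $\bullet_\dag$ from Theorem \ref{Thm1}\ref{Thm1a} ensures that the composite equivalence is induced by the functor $\bullet_\dag$ itself.

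The main obstacle I anticipate is verifying that every indecomposable projective in $\HC(U^{\chi,\chi'})$ is realized as a summand of a composition of translation bimodules of the form $P^{\mu,\nu}$ with $W_\nu \subset W_\mu$ or vice versa, and dually that every indecomposable object in $\CB_{\mu,\lambda}$ is a summand of a Bott--Samelson generator; this is the ``generation'' statement that lets full faithfulness upgrade to an equivalence. In the integral case this is classical (going back to Soergel--Stroppel), and in the non-integral case one needs to set up the corresponding Bott--Samelson calculus for the parabolic subgroup of $W$ stabilizing the non-integral part of $\lambda$, then argue that translation through walls of this parabolic Weyl group suffices to generate all projectives.
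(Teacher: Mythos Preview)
Your outline is essentially the same as the paper's: compute $(\bullet)_\dag$ on Bott--Samelson type compositions of translation bimodules via monoidality and Theorem~\ref{Thm1}\ref{Thm1c}, use Theorem~\ref{Thm1}\ref{Thm1b} for full faithfulness, then transport idempotents, and finally pass to $D^b \simeq K^b(\text{proj})$ using finite homological dimension.

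The one place where the paper is sharper is exactly the obstacle you flag at the end. You propose to show directly that every indecomposable projective in $\HC(U^{\chi,\chi'})$ occurs as a summand of a composition of translation bimodules, and you gesture at a Bott--Samelson calculus for the integral Weyl group $W_{[\lambda]}$ together with the extra abelian group $C$. The paper avoids this direct verification entirely by a counting argument: it first defines $\CP$ as the Karoubian closure of the $P^{w,I}$, proves $\CP \cong \CB_{\mu,\lambda}$ (which only needs full faithfulness plus the fact that every generator of $\CB_{\mu,\lambda}$ is hit, exactly as you say), and then observes that $\#\text{indec}(\CB_{\mu,\lambda}) \geq \#\bigl(W_\mu \backslash \widehat{W}_{[\lambda]} / W_\lambda\bigr)$ by Williamson's classification of singular Soergel bimodules, while $\#\text{indec proj}(\HC(U^{\chi,\chi'})) = \#\bigl(W_\mu \backslash \widehat{W}_{[\lambda]} / W_\lambda\bigr)$ by the Bernstein--Gelfand classification (Proposition~\ref{prop number of complete HCbim}). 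Since $\CP$ sits inside the projectives, the two numbers force $\CP$ to be \emph{all} projectives. This sidesteps any explicit generation argument on the Harish--Chandra side and simultaneously pins down the number of indecomposables in $\CB_{\mu,\lambda}$ (which the paper records as Proposition~\ref{lem:2rel Soergel cat}). Your route would also work, but the counting is cleaner and uses only the two classification inputs already available.
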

In \cite{LY20}, one of the key results is to relate the category of monodromic sheaves on $U\backslash G\slash U$ to the suitable category of Soergel bimodules, see \cite[Proposition $9.5$]{LY20}. Theorem \ref{Thm4} can be thought as a $D$-module analog of this result over complex numbers.
 
 Let $W_{[\lambda]}=\{w\in W|w\cdot \lambda-\lambda \in \Lambda_r\}$ and $\widehat{W}_{[\lambda]}=\{w\in W|w\cdot \lambda-\lambda \in \Lambda\}$. The group $W_{[\lambda]}$ is called the \emph{integral Weyl group} of $\lambda$. In Section \ref{non integral weight},  a subgroup $C$ of $\widehat{W}_{[\lambda]}$ is constructed so that we have  a semi-direct decomposition $\widehat{W}_{[\lambda]}= C\ltimes W_{[\lambda]}$. Furthermore,  $w \cdot \lambda$ is dominant for any $w\in C$ and there is a natural inclusion of groups $C\hookrightarrow \Lambda/\Lambda_r$; particularly, $C$ is abelian.
\subsubsection{The category $\CB_{[\lambda]}$ associated to the group $\widehat{W}_{[\lambda]}$}  \label{ssec: Sbim at non integral }   We recall that $C(G)$ is the center of $G$ and  $\mathfrak{X}$ is  the group of its characters. Let $R\bim^{C(G)}$ be the category of $\mathfrak{X}$-graded $R$-bimodules. We now define two families of objects in $R\bim^{C(G)}$:
\begin{itemize}
    \item Let $\Pi_{[\lambda]}$ be the  collection of simple reflections in $W_{[\lambda]}$ as in the second bullet point of Section $\ref{non integral weight}$.  For any $s\in \Pi_{[\lambda]}$, we consider the $R$-bimodule $R\otimes_{R^s} R$ and equip it with the  trivial $C(G)$-action.
    \item To each $w\in C \subset \widehat{W}_{[\lambda]}$, we equip $R$ with an $R$-bimodule structure as follows: 
    \[f\cdot g=(wf)g,\tab g\cdot f=fg,\]
    for any $f,g \in R$. Let $R_w$ denote this $R$-bimodule and let $C(G)$ act on $R_w$ via the character  corresponding to the element $w \in C\subset \Lambda/\Lambda_r$.
\end{itemize}
\begin{Def}The category $\CB_{[\lambda]}$ is the smallest  additive subcategory of the category $R\bim^{C(G)}$ generated by  the following set:
\[\{R\otimes_{R^s}R, R_w\}_{s\in \Pi_{[\lambda]}, w\in C}\]
under taking tensor products, direct sums and direct summands.
\end{Def}
\subsubsection{The category $\CB_{\mu, \lambda}$ associated to the pair of dominant weights $(\mu, \lambda)$} Let us form the category $\CB_{[\lambda]}$   as above.  Let $M$ be any object of $(R^{W_\mu},R^{W_\lambda})\bim^{C(G)}$ then $M=\oplus_{\xi \in \mathfrak{X}} M_{\xi}$, here $M_{\xi}$ is the $\xi$-component of $M$. To any character $\xi'$ of $C(G)$, the new object $M[\xi']$ is defined by $M[\xi']_{\xi}=M_{\xi+\xi'}$ for any character $\xi$ of $C(G)$. 

 Let us  consider the restriction functor
\[
\mathcal{F}: R\bim^{C(G)}\rightarrow (R^{W_\mu},R^{W_\lambda})\bim^{C(G)}.
\]
 Let us consider the images of objects in $\CB_{[\lambda]}$ under this restriction functor. We then  shift the $\mathfrak{X}$-grading of all these images by the character corresponding to the coset $\mu-\lambda+\Lambda_r$. 
 \begin{Def}The  category $\CB_{\mu, \lambda}$ is the full Karoubian additive subcategory of the category $(R^{W_\mu},R^{W_\lambda})\bim^{C(G)}$ generated by these grading shifted images of objects in $\CB_{[\lambda]}$.
\end{Def}


The grading shift in the definition of $\CB_{\mu, \lambda}$ is only needed to make sure that the $\mathfrak{X}$-grading on objects of $\CB_{\mu, \lambda}$ is compatible  with the functor $(\ref{restriction functor 1})$. Indecomposable objects in $\CB_{\mu, \lambda}$ are one-to-one correspondence with the double cosets $W_\mu\backslash \widehat{W}_{[\lambda]}/W_\lambda$, which is a corollary of Theorem $1$ in \cite{GW}. 


\subsection{Comparison to Soergel's paper}
Let $\chi_0$ be the image of $0 \in \fh^*$ under the natural map $\fh^*\rightarrow \fh^*/(W, \cdot)$. Let $\CHC$ denote the full subcategory of the category $\HC(U)$ consisting of all objects $M$ such that $\m_{\chi_0}^n M=M\m_{\chi_0}^n =0$ for $n \gg 0$. Let $\CHC^n$ be the full subcategory of the category $\CHC$ consisting of all objects $M$ such that $M\m^n_{\chi_0}=0$. 

Let $S:=\C[\fh^*]$ which is naturally graded with $\fh^*$  in degree $1$. Let $\m$ be the maximal ideal at $0$ of $S$. Let $\Pi$ be the set of simple reflections in $W$. To each $s\in \Pi$, let $S^s$ be the $s$-invariant part of $S$. The category of Soergel bimodules is the full additive subcategory of the category of graded $S$-bimodules generated by $\{S\otimes_{S^s} S\}_{s\in \Pi}$ under taking tensor products, direct sums and direct summands. By \cite[Theorem 2]{So92}, the indecomposable Soergel bimodules are labeled by the elements of $W$. Let $B_x$ denote the indecomposable Soergel bimodule corresponding to the element $x\in W$, in particular $B_s=S\otimes_{S^s} S$. Under the natural inclusion $Z\hookrightarrow \C[\fh^*]=S$, any $S$-bimodule can be viewed as $Z$-bimodules.

 Soergel constructed the functor $\BV: \CHC \rightarrow Z\bim$  with the following properties, see Theorem $13$ and Proposition $11,12$ in \cite{So92}:
\begin{Thm} \label{Thm3}

a) $\BV$ is exact and monoidal.

b) For any $Q$ which is projective in $\CHC^n$  and any $M$ in $\CHC$, the following map is bijective:
\[ \Hom_{\CHC}(M,Q)\rightarrow \Hom_{Z\bim}(\BV M, \BV Q).\]

c) Let $P_x^n$ be the indecomposable projectives of $\CHC^n$, suitably parametrized by elements $x$ in the Weyl group $W$. Then $\BV P_x^n \cong B_x/B_x\m^n$.
\end{Thm}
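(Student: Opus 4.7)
The plan is to identify the special case $\chi=\chi'=\chi_0$ of the restriction functor $\bullet_\dag$ with Soergel's $\BV$, and then read off the three statements from Theorems \ref{Thm1} and \ref{Thm4}. Since $0\in\fh^*$ is dominant with trivial dot-stabiliser $W_0=\{1\}$, we have $R^{W_0}=R\cong Z^{\wedge_{\chi_0}}$, so $\bullet_\dag$ specialises to an exact monoidal functor $\HC(U^{\chi_0,\chi_0})\to (R,R)\bim^{C(G)}$. Any object of $\CHC$, being killed by some power of $\m_{\chi_0}$ on each side, is naturally a $U^{\chi_0}$-bimodule, so there is a fully faithful embedding $\CHC\hookrightarrow \HC(U^{\chi_0,\chi_0})$. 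The first task is to verify that composing this embedding with $\bullet_\dag$ and forgetting the $\mathfrak{X}$-grading reproduces $\BV\colon\CHC\to Z\bim$; this uses the identification of the finite $W$-algebra for the regular nilpotent orbit with $Z$, and compares the two constructions on a generating family such as the translation bimodules $P^{\mu,0}$ and $P^{0,\lambda}$.

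Granting this identification, part a) is immediate from Theorem \ref{Thm1}. For part b), observe first that the indecomposable projectives of $\CHC^n$ are quotients $Q=P/I_n$ of the indecomposable projectives $P\in\HC(U^{\chi_0,\chi_0})$, where $I_n\subset P$ is the smallest sub-bimodule with $P/I_n\in\CHC^n$. For any $M\in\CHC$, every morphism $M\to Q$ factors through the analogous quotient of $M$, so the Hom bijection of Theorem \ref{Thm1} for the pair $(M,P)$ descends, by exactness of $\bullet_\dag$, to the Hom bijection for the pair $(M,Q)$.

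For part c), specialise Theorem \ref{Thm4} to $\mu=\lambda=0$. Then $W_{[0]}=\widehat{W}_{[0]}=W$, the abelian complement $C$ is trivial, and the double cosets $W_\mu\backslash \widehat{W}_{[0]}/W_\lambda$ reduce to $W$. The equivalence $\CP\cong\CB_{0,0}$ therefore matches the indecomposable projective $P_x\in\HC(U^{\chi_0,\chi_0})$ labelled by $x\in W$ with the completed indecomposable Soergel bimodule $R\otimes_S B_x$. Applying $\bullet_\dag$ to the projective $P_x^n$ cutting out $x$ in $\CHC^n$ and using exactness, one obtains a finite-dimensional quotient of $R\otimes_S B_x$ which, after identifying the $\m$-adic and $\m_{\chi_0}$-adic topologies via $\sqrt{\m_{\chi_0}S}=\m$, matches Soergel's $B_x/B_x\m^n$.

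The main obstacle is the identification of $\BV$ with the restriction of $\bullet_\dag$ to $\CHC$: the two functors are built by rather different procedures (one through invariants for a principal Whittaker-type triple, the other as Losev's restriction to the finite $W$-algebra), and their comparison requires careful verification on a generating family. Once this identification is in place, the three parts of the theorem become formal consequences of Theorems \ref{Thm1} and \ref{Thm4}.
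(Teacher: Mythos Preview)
The paper does not give its own proof of this theorem. It is stated as Soergel's result, with the reference to Theorem~13 and Propositions~11,~12 of \cite{So92}; the paragraph that follows merely \emph{describes} Soergel's argument via the thickened category $O_0^\infty$ and the representing system $\{P^n\}$. The paper then remarks, without details, that Theorem~\ref{Thm3} ``can be deduced from Theorem~\ref{Thm1} and~\ref{Thm4}'' by specialising to $\chi=\chi'=\chi_0$ and noting that $C(G)$ acts trivially. Your proposal is an attempt to flesh out this one-line remark, so in that sense you are following the paper's intended alternative route rather than Soergel's original one.

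Your outline is essentially correct, and you have put your finger on the one genuine gap: the paper never checks that Soergel's $\BV$ agrees with $\bullet_\dag$ restricted to $\CHC$. Without that identification, what Theorems~\ref{Thm1} and~\ref{Thm4} actually give is that \emph{some} functor $\CHC\to Z\bim$ (namely $\bullet_\dag$) enjoys properties a)--c); they do not, by themselves, say anything about Soergel's specific $\BV$. The paper is content with this, and so, implicitly, is your proposal.

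One technical comment on part~b): your descent argument from projectives in $\HC(U^{\chi_0,\chi_0})$ to projectives in $\CHC^n$ is a little indirect. The paper has a cleaner path that avoids the completed category entirely: any projective $Q$ in $\CHC^n$ is a direct summand of some $_{\chi_0}(V\otimes U(\g)/\m_{\chi_0}^n)$, and Lemma~\ref{lem ff of restriction} together with Corollary~\ref{cor: ff of restriction} gives the Hom bijection directly for such objects and any $M\in\HC_\CN(U)\supset\CHC$. This is simpler than lifting to $\HC(U^{\chi_0,\chi_0})$ and invoking Theorem~\ref{Thm1}\ref{Thm1b}.
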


In \cite[$\mathsection 3.3$]{So92}, Soergel introduced the category $O^{\infty}_0$ which can be viewed as a "thickened" version of the principal block $O_0$. He established a Beinstein-Gelfand type equivalence $BG: \CHC\xrightarrow[]{\cong} O^{\infty}_0$ and a functor $\BV: O^{\infty}_0\rightarrow Z\bim$. The functor $\BV: \CHC \rightarrow Z\bim$ can be viewed as the  composition $\CHC\xrightarrow[]{BG}O^{\infty}_0\xrightarrow[]{\BV} Z\bim$. This construction was used in \cite{So92} to give proofs for most of parts of Theorem \ref{Thm3} based on propeties of $\BV: O^\infty_0 \rightarrow Z\bim$. However,  the construction on its own cannot be used to show that $V$ is monoidal because $O_0^\infty$ does not carry an obvious monoidal structure.

 Soergel used another realization of the functor $\BV: \CHC\rightarrow Z\bim$ to establish its monoidal structure. There is an irreducible object in $\CHC$ which is  called the principal series $L$.  The subcategory $\CHC^n$ has enough projective objects  for any $n$. One can choose projective cover $P^n$ of $L$ in $\CHC^n$ for each $n$  such that we have a  system consisting of surjective morphisms $\dots P^{n+1}\twoheadrightarrow P^n \dots \twoheadrightarrow L$. Then $\BV(X)=\varprojlim\Hom_{\CHC}(P^n, X)$ for any $X$ in $\CHC$. Soergel then constructed a suitable system of morphisms $P^n\rightarrow P^n \otimes_{U(\g)} P^n$ which gives a monoidal structure on $\BV$.

 The category $\HC(U^{\chi_0,\chi_0})$ contains the category $\CHC$. Theorem $\ref{Thm3}$ can be deduced from Theorem $\ref{Thm1}$ and $\ref{Thm4}$ by considering the functor $\bullet_\dag: \HC(U^{\chi_0,\chi_0})\rightarrow Z^{\wedge_{\chi_0}}\bim^{C(G)}$. Note that $C(G)$ acts trivially on any objects of $\HC(U^{\chi_0,\chi_0})$, hence the functor $\bullet_\dag$ can be viewed as $\bullet_\dag: \HC(U^{0,0})\rightarrow Z^{\wedge_{\chi_0}}\bim$. The monoidal structure of the functor $\BV$ is then straightforward with the approach using the restriction functor as above.

\subsection{Organization of the paper}
In Section $\ref{sec: preliminaries}$ we collect some basic definitions, notations and study the group $\widehat{W}_{[\lambda]}$. Section $\ref{sec: Cat O and HC}$ recalls basic facts about the Harish-Chandra $U(\g)$-bimodules and the category O. Section $\ref{sec: Deform O and Complete HC}$ is about the deformed category $\hat{O}$ and the category of Harish-Chandra bimodules $\HC(U^{\chi, \chi'})$. Section $\ref{sec: restriction functor}$ is the main part of this paper where we study in detail the functor $\bullet_\dag$ in $(\ref{restriction functor 1})$ and prove  Theorem $\ref{Thm4}$.

\subsection{Acknowledgment} I am deeply grateful to Ivan Losev for suggesting this problem and for many fruitful discussions. I thank Pablo Boixeda Alvarez for useful conversations. I  thank Zachary Carlini for helpful comments which have greatly improved the exposition.  This work is partially supported by the NSF under grant DMS-2001139.

\section{Preliminaries} \label{sec: preliminaries}
In this section we establish notations and collect some facts that will be used later. Let $\g$ be a semisimple Lie algebra, $G$ be the  simply connected algebraic group and $U(\g)$ be the universal enveloping algebra. To simplify notations, we sometimes write $U$ instead of $U(\g)$. Let $\fh, \fb$ and $ \fb_{-}$ be a Cartan subalgebra, a Borel subalgebra and the opposite Borel subalgebra with respect to the pair $(\fh, \fb)$, respectively. Let $\n$ and $ \n_{-}$ be the nilradicals of $\fb$ and $ \fb_-$, respectively. We have the triangular decomposition $U(\g) \cong U(\n_-)\otimes U(\fh)\otimes U(\n)$.

Let $\Lambda$ and $\Lambda_r$ be the weight lattice and the root lattice in $\fh^*$, respectively. Let $\Phi$ be the set of roots in $\fh^*$. Let $\Phi_+$ and $\Phi_-$ be the set of positive roots and negative roots in $\fh^*$, respectively. Let $\a_1, \dots, \a_n$ be the set of simple positive roots in $\Phi_+$. 

Let $W$ be the corresponding Weyl group. Let $s_i$ be the simple reflection in $W$ corresonding to the simple root $\a_i$. The group $W$ acts naturally on $\fh$ and $\fh^*$. We also have the dot action of $W$ on $\fh^*$ defined by $w\cdot \lambda =w(\lambda+\rho)-\rho$, here $\rho$ is half the  sum of positive roots in $\fh^*$.


The weight $\lambda$ in $\fh^*$ is called \emph{regular} if $W \cdot \lambda$ is a free orbit. The weight $\lambda$ is \emph{dominant} if $\<\lambda+\rho, \a^\vee\> \neq -1, -2, \dots$ for each positive root $\a$. The weight $\lambda$ is \emph{anti-dominant} if $\<\lambda+\rho, \a^\vee\> \neq 1, 2, \dots$ for each positive root $\a$.

Let $Z$ denote the center of $U(\g)$. Recall the Harish-Chandra isomorphism 
\[ Z\cong \C[\fh^*]^{(W, \cdot)}.\]
For any $\lambda \in \fh^*$, let $\chi_\lambda$ be the image of $\lambda$ in $\fh^*/(W,\cdot)$ under the natural map $\fh^*\rightarrow \fh^*/(W,\cdot)$.

For any $\g$-module $V$, the vector space $\Hom_{\g}(V, U(\g))$ carries a natural $Z$-module structure as follows: $(z\cdot f)(v)=zf(v)$ for $v\in V, f\in \Hom_\g(V, U(\g))$ and $ z\in Z$. The $Z$-module $\Hom_\g(V, U(\g))$ is free by the  Kostant theorem \cite[Theorem 21]{K63}.

\begin{Thm}[Kostant theorem]
Let $V$ be an irreducible representation of $\g$. The $Z$-module $\Hom_{\g}(V, U(\g))$ is free and its rank equal to the multiplicity of the zero weight in $V$.
\end{Thm}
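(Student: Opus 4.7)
The plan is to reduce the theorem to Kostant's "separation of variables," which asserts the existence of a graded, $\ad(\g)$-stable subspace $\CH \subset U(\g)$ (the harmonics) such that multiplication induces an isomorphism $Z \otimes_\C \CH \xrightarrow[]{\sim} U(\g)$ of both $Z$-modules and $\g$-modules, where $Z$ carries the trivial adjoint action. Granting this, for any $\g$-module $V$ one obtains
\[ \Hom_\g(V, U(\g)) \cong Z \otimes_\C \Hom_\g(V, \CH), \]
which is manifestly a free $Z$-module whose rank equals the multiplicity of $V$ in $\CH$. The proof then reduces to the assertion that when $V$ is irreducible this multiplicity equals $\dim V^0$.

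First I would establish the separation of variables at the symmetric-algebra level. Set $S := S(\g)$ with its $G$-action, and let $S^G$ be the subring of invariants. Via the Chevalley restriction isomorphism $S^G \cong \C[\fh]^W$ together with Chevalley--Shephard--Todd, $\C[\fh]$ is free over $\C[\fh]^W$. Picking a graded $G$-stable complement $\CH_{\gr} \subset S$ to the ideal $S \cdot S^G_+$ gives a multiplication isomorphism $S^G \otimes_\C \CH_{\gr} \xrightarrow[]{\sim} S$ of $G \times S^G$-modules. The $\g$-module structure of $\CH_{\gr}$ is then identified with $\C[\CN]$, the coordinate ring of the nilpotent cone, using that $\CN$ is cut out scheme-theoretically by the fundamental invariants, so that $\CH_{\gr} \cong S / S \cdot S^G_+ \cong \C[\CN]$ as graded $G$-modules. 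Finally, the decomposition $\C[\CN] = \bigoplus_V V^* \otimes V^0$ is read off from the Springer-type resolution $G \times^{B} \n \to \CN$: pushing forward and applying Borel--Weil on $G/B$ computes the multiplicity of each irreducible as $\dim V^0$.

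Second I would transport this result to the enveloping algebra via the symmetrization map $\sigma: S \xrightarrow[]{\sim} U(\g)$, a $G$-equivariant filtered linear isomorphism that restricts to an isomorphism $S^G \xrightarrow[]{\sim} Z$ and whose associated graded is the identity. Setting $\CH := \sigma(\CH_{\gr})$ and inducting on the PBW filtration degree upgrades the associated-graded multiplication isomorphism to $Z \otimes_\C \CH \xrightarrow[]{\sim} U(\g)$ in $U(\g)$ itself. Combining with the multiplicity computation from the previous step completes the proof.

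The main obstacle is the $\g$-module identification of the harmonics with $\C[\CN]$, which is the geometric heart of Kostant's argument: it needs the normality of $\CN$ so that global sections pull back along the Springer resolution without correction, together with Borel--Weil to evaluate $H^0$ of the relevant line bundles on $G/B$. Everything else, including the promotion from $S$ to $U(\g)$ via symmetrization and the extraction of freeness of $\Hom_\g(V, U(\g))$, is formal once the harmonic decomposition is in place.
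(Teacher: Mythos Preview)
The paper does not supply its own proof of this statement: it is quoted as a classical result and attributed to Kostant's original paper \cite[Theorem~21]{K63}, with no argument given. So there is no in-paper proof to compare against; your outline is essentially a sketch of Kostant's original approach (separation of variables, identification of the harmonics with $\C[\CN]$, and the zero-weight multiplicity count), which is the standard route.

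One small gap in your sketch: the sentence invoking Chevalley restriction and Chevalley--Shephard--Todd establishes that $\C[\fh]$ is free over $\C[\fh]^W$, but what you actually need for the multiplication map $S^G \otimes_\C \CH_{\gr} \to S$ to be an isomorphism is freeness of $S = S(\g)$ over $S^G$, and that does not follow formally from the finite-group statement. The missing step is that the homogeneous generators $f_1,\dots,f_r$ of $S^G$ form a regular sequence in the polynomial ring $S$; this comes from the dimension identity $\dim \CN = \dim \g - r$ together with $S$ being Cohen--Macaulay. With that filled in (and granting the normality of $\CN$ and the Borel--Weil computation you already flag as the geometric heart), the outline is correct.
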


\subsection{Completions}
\

For any closed point $\chi$ in Spec $Z$, let $\m_\chi$ be its maximal ideal. Let $Z^{\wedge_\chi}$ be the completion of $Z$ at the ideal $\m_\chi$. Let  $U^{\wedge_\chi}$ be the completion of $U(\g)$ at the two-sided ideal $U(\g) \m_\chi$ and $U^\chi$ be the tensor product $U(\g) \otimes_Z Z^{\wedge_\chi}$. The adjoint $\g$-action on $U(\g)$ extends to $\g$-actions on $U^\chi$ and $U^{\wedge_\chi}$. There is a natural map:
\[\iota: U^\chi \rightarrow U^{\wedge_\chi}.\]
Since the $\g$-action on $U^\chi$ is locally finite, $\iota$ maps $U^\chi$ to $U^{\wedge_\chi}_{\fin}$, the $\g$-locally finite part of $U^{\wedge_\chi}$.

\begin{Lem}
Under the map $\iota$, the algebra $U^\chi$ maps isomorphically onto $U^{\wedge_\chi}_{\fin}$.
\end{Lem}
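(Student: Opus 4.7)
The plan is to exploit the Kostant decomposition of $U(\g)$ under the adjoint action to get explicit descriptions of both $U^\chi$ and $U^{\wedge_\chi}$, and then to identify the locally finite part of the latter.

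First I would decompose $U(\g)$ as an adjoint $\g$-module. Since the adjoint action is locally finite and completely reducible, we have
\[ U(\g) \cong \bigoplus_V V \otimes_\C \Hom_\g(V, U(\g)) \]
where the sum runs over (isomorphism classes of) finite-dimensional irreducible $\g$-modules. By the Kostant theorem quoted just above, each $F_V := \Hom_\g(V, U(\g))$ is a free $Z$-module of finite rank equal to the zero weight multiplicity of $V$. In particular, $U(\g)$ is free as a $Z$-module, so tensoring with $Z^{\wedge_\chi}$ over $Z$ gives
\[ U^\chi = \bigoplus_V V \otimes_\C (F_V \otimes_Z Z^{\wedge_\chi}) = \bigoplus_V V \otimes_\C \widehat{F_V}, \]
where $\widehat{F_V}$ denotes the $\m_\chi$-adic completion of $F_V$ (the equality with $F_V \otimes_Z Z^{\wedge_\chi}$ uses freeness and finite rank).

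Next I would describe $U^{\wedge_\chi}$ using the same decomposition. Since $\m_\chi \subset Z$ acts on each summand through $F_V$, we have $U(\g)\m_\chi^n = \bigoplus_V V \otimes F_V \m_\chi^n$, hence
\[ U(\g)/U(\g)\m_\chi^n = \bigoplus_V V \otimes (F_V/F_V\m_\chi^n). \]
Passing to the inverse limit, which does not commute with the direct sum, yields
\[ U^{\wedge_\chi} \cong \prod_V V \otimes_\C \widehat{F_V}. \]
The map $\iota$ becomes the obvious inclusion of the direct sum into the direct product, so injectivity is immediate.

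It remains to identify $U^{\wedge_\chi}_{\fin}$. Given an element $x = \sum_V x_V$ with $x_V \in V \otimes \widehat{F_V}$, I would observe that the adjoint action respects the isotypic decomposition: $Z$ (and in particular $\widehat{F_V}$) is $\g$-invariant, so $U(\g)\cdot x_V \subset V \otimes \mathrm{span}_\C(\text{coefficients of } x_V)$ is automatically finite-dimensional for each single $V$, while $\g$-orbits from distinct isotypic components remain linearly independent. Therefore $U(\g)\cdot x$ is finite-dimensional if and only if only finitely many $x_V$ are nonzero, i.e.\ if and only if $x \in \bigoplus_V V \otimes \widehat{F_V} = \iota(U^\chi)$. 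This identifies $\iota(U^\chi)$ with $U^{\wedge_\chi}_{\fin}$, proving the lemma. The only slightly delicate point is the last one, checking that no ``infinite-sum'' element of the product can be locally finite; this is what makes completing a direct sum and then taking the locally finite part give back exactly the termwise completion.
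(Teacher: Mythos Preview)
Your approach is essentially the paper's: both rest on the Kostant isotypic decomposition $U(\g)\cong\bigoplus_V V\otimes F_V$ with $F_V=\Hom_\g(V,U(\g))$ free of finite rank over $Z$, and then track what happens under completion. The paper packages the comparison as showing that $\Hom_\g(V,U^\chi)\to\Hom_\g(V,U^{\wedge_\chi})$ is bijective for each irreducible $V$, which is exactly matching the $V$-isotypic components; you write out the isotypic pieces explicitly.

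One inaccuracy worth flagging: the claimed isomorphism $U^{\wedge_\chi}\cong\prod_V V\otimes\widehat{F_V}$ is not correct. The inverse limit $\varprojlim_n\bigoplus_V V\otimes(F_V/\m_\chi^n F_V)$ consists of those families $(x_V)_V$ for which, at each fixed level $n$, only finitely many $x_V$ are nonzero modulo $\m_\chi^n$; any family with infinitely many $x_V\not\equiv 0\pmod{\m_\chi}$ lies in the product but not in $U^{\wedge_\chi}$. Fortunately your argument only needs the \emph{embedding} $U^{\wedge_\chi}\hookrightarrow\prod_V V\otimes\widehat{F_V}$, which is genuine, and then the chain
\[
U^{\wedge_\chi}_{\fin}\ \subset\ \Bigl(\prod_V V\otimes\widehat{F_V}\Bigr)_{\fin}\ =\ \bigoplus_V V\otimes\widehat{F_V}\ =\ \iota(U^\chi)
\]
still gives what you want. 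The paper's $\Hom_\g(V,-)$ formulation sidesteps this issue entirely by never invoking a global description of $U^{\wedge_\chi}$.
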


\begin{proof} It is enough to show that the natural map
\Eq{\label{Hom(V,.)}\Hom_\g(V, U^\chi)\rightarrow \Hom_\g(V, U^{\wedge_\chi})}
is bijective for any finite dimensional irreducible $\g$-representation $V$.

Let $\textnormal{Irr}(\g)$ be the set of irreducible representations of $\g$. The $\g$-action on $V$ and $Z$-module structure on $\Hom_\g(V, U(\g))$ give the $\g$-action and $Z$-module structure on the tensor product $V\otimes \Hom_{\g}(V, U(\g))$.  The natural isomorphism 
\[ \bigoplus_{V\in \textnormal{Irr}(\g)} V\otimes \Hom_\g(V, U(\g))\xrightarrow[]{\cong} U(\g)\]
is $\g$-linear and $Z$-linear. Therefore, we have
\Eqn{\Hom_\g(V, U^\chi)&\cong \Hom_\g(V, U(\g))\otimes_Z Z^{\wedge_\chi}\\
 \Hom_\g(V, U(\g)/\m_\chi^k)&\cong \Hom_\g(V, U(\g))/\m_\chi^k}

 Since $\Hom_\g(V, U(\g))$ is a free $Z$-module of finite rank by the Kostant theorem, we have
 \Eqn{\Hom_\g(V, U^\chi)&\cong  \Hom_\g(V, U(\g))\otimes_Z Z^{\wedge_\chi}\\
 &\cong \varprojlim \Hom_\g(V, U(\g))/\m_\chi^k\\
 &\cong \varprojlim \Hom_\g(V, U(\g)/\m_\chi^k)\\
 &\cong \Hom_\g(V, U^{\wedge_\chi}).}
This shows that \eqref{Hom(V,.)} is an isomorphism.

\end{proof}

\begin{Lem} $U^\chi$ is Noetherian.
\end{Lem}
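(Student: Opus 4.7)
The plan is to equip $U^\chi$ with a PBW-type filtration whose associated graded is a commutative Noetherian ring, and then invoke the standard principle that a filtered ring with Noetherian associated graded is itself Noetherian.

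Concretely, let $U(\g)^{\leq n}$ denote the standard PBW filtration on $U(\g)$, and set
\[ F_n U^\chi := Z^{\wedge_\chi} \cdot U(\g)^{\leq n} \subseteq U^\chi. \]
Since $Z$ is central in $U(\g)$, the image of $Z^{\wedge_\chi}$ in $U^\chi = U(\g)\otimes_Z Z^{\wedge_\chi}$ is central; hence each $F_n U^\chi$ is a $Z^{\wedge_\chi}$-submodule, and since $U(\g)^{\leq n}$ is a finite-dimensional $\C$-vector space, $F_n U^\chi$ is finitely generated over $Z^{\wedge_\chi}$. The filtration is exhaustive (every element of $U^\chi$ is a $Z^{\wedge_\chi}$-linear combination of elements of $U(\g)$ of bounded PBW degree), satisfies $F_n U^\chi \cdot F_m U^\chi \subseteq F_{n+m} U^\chi$, and $[F_n U^\chi, F_m U^\chi] \subseteq F_{n+m-1} U^\chi$ by the corresponding estimate on $U(\g)$. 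In particular $\gr U^\chi$ is commutative.

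Next I would identify $\gr U^\chi$ as a quotient of a Noetherian ring. The symbol maps of elements of $\g \subseteq U(\g)^{\leq 1}$ together with $Z^{\wedge_\chi}$ generate $\gr U^\chi$ as a commutative $Z^{\wedge_\chi}$-algebra, yielding a surjection
\[ Z^{\wedge_\chi} \otimes_\C S(\g) \twoheadrightarrow \gr U^\chi. \]
The left-hand side is isomorphic to the polynomial ring $Z^{\wedge_\chi}[x_1, \ldots, x_d]$ with $d = \dim \g$, and is Noetherian by Hilbert's basis theorem, since $Z^{\wedge_\chi}$ is the completion of the finitely generated commutative ring $Z \cong \C[\fh^*]^{(W,\cdot)}$ at a maximal ideal, hence itself Noetherian. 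Consequently $\gr U^\chi$ is Noetherian, and the standard filtered-to-graded principle (e.g.\ McConnell--Robson, Theorem 1.6.9) gives that $U^\chi$ is Noetherian.

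The only technical point that needs real verification is that $Z^{\wedge_\chi}$ is genuinely central in $U^\chi$, so that the filtration $F_n U^\chi$ is a well-defined multiplicative filtration with commutative associated graded; this is immediate from the definition $U^\chi = U(\g) \otimes_Z Z^{\wedge_\chi}$ and the centrality of $Z$ in $U(\g)$. Everything else is routine once this is in place.
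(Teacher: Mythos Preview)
Your proof is correct and is essentially the same as the paper's: both use the PBW filtration together with the filtered-to-graded Noetherianity principle, reducing to the Noetherianity of $Z^{\wedge_\chi}\otimes_\C S(\g)\cong Z^{\wedge_\chi}\otimes_\C \C[\g^*]$. The only cosmetic difference is that the paper first applies the argument to the auxiliary algebra $Z^{\wedge_\chi}\otimes_\C U(\g)$ and then passes to its quotient $U^\chi$, whereas you filter $U^\chi$ directly and exhibit $\gr U^\chi$ as a quotient of $Z^{\wedge_\chi}\otimes_\C S(\g)$.
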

\begin{proof}
    We have a surjective map of algebras: $Z^{\wedge_\chi}\otimes_\C U(\g)\twoheadrightarrow U^\chi$, therefore, it is enough to show that $Z^{\wedge_\chi}\otimes_\C U(\g)$ is Noetherian. By the PBW filtration , $\gr \; Z^{\wedge_\chi}\otimes_\C U(\g) \cong Z^{\wedge_\chi}\otimes_\C \C[\g^*]$. The latter algebra is Noetherian hence so is $Z^{\wedge_\chi}\otimes_\C U(\g)$.
\end{proof}

\subsection{Non-integral weights} \label{non integral weight}
\

When  we are dealing with the non-integral weights, it is necessary to consider the integral Weyl group instead of the Weyl group $W$. We now recall some facts about non-integral weights. For any $\lambda \in \fh^*$, let  $\Phi_{[\lambda]}$ denote $\{\a\in \Phi: \<\lambda, \a^\vee\> \in \Z\}$, the integral root system associated to $\lambda$. Let $E(\lambda) \subset \fh^*$ be the real subspace generated by $\Phi_{[\lambda]}$. The integral Weyl group $W_{[\lambda]}$ is the subgroup of $W$ generated by all reflections $s_\a$ for all $\a$ in $\Phi_{[\lambda]}$, equivalently, $W_{[\lambda]}$ is the set $\{w\in W|w\cdot \lambda-\lambda \in \Lambda_r\}$.  Let $W_\lambda$ be the stabilizer of $\lambda$ in $W$ under the dot action. The following results can be found in  \cite[$\mathsection 7.4$]{H08}.
\begin{itemize}
    \item If $\mu-\lambda \in \Lambda$ then $\Phi_{[\lambda]}=\Phi_{[\mu]}, W_{[\lambda]}=W_{[\mu]}$ . 
    \item The set $\Phi_{[\lambda]}$ forms a root system in $E(\lambda)$ and $W_{[\lambda]}$ is its Weyl group. There is a unique simple root system of $\Phi_{[\lambda]}$  in the set of positive roots $\Phi_+$. Let $\{\a^\lambda_1, \dots, \a^\lambda_k\}$ be this simple root system of $\Phi_{[\lambda]}$ and $\Pi_{[\lambda]}:=\{s_{\a^\lambda_1}, \dots, s_{\a^\lambda_k}\}$ be the set of  corresponding simple reflections in $W_{[\lambda]}$.
    \item For each $\lambda \in\fh^*$, there is a unique $\lambda^\sharp$ in $E(\lambda)$ such that $\<\lambda^\sharp, \a^\vee\>=\<\lambda, \a^\vee\>$ for all $\a \in \Phi_{[\lambda]}$.
    \item The weight $\lambda$ is dominant if and only if $\< \lambda+\rho, \a^\vee\> \geq 0$ for all $\a\in \Phi^+_{[\lambda]}:=\Phi_{[\lambda]}\cap \Phi^+$.
    \item The orbit $W_{[\lambda]}\cdot \lambda$ has a  unique dominant weight and a unique anti-dominant weight.
    \item For any $w\in W$, we have that $W_{[w\cdot \mu]}= W_{[w\mu]}=wW_{[\mu]}w^{-1}.$
\end{itemize}


The next lemma concerns about the intersection of the coset $\lambda +\Lambda$ with the reflection hyperplanes in $\fh^*$
\begin{Lem}\label{subgeneric weights} There exists a dominant weight $\mu \in \lambda +\Lambda$ such that $\mu$ is regular. For any simple root $\a^\lambda_i$  in $\Phi_{[\lambda]}$, there exists a dominant weight $\mu_i\in \lambda +\Lambda$ such that $\textnormal{Stab}_W(\mu_i)=\{1, s_{\a^\lambda_i}\}$.
\end{Lem}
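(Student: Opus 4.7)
The plan is to build both weights starting from the unique dominant weight $\lambda_0$ in the $W_{[\lambda]}$-dot-orbit of $\lambda$, whose existence is given by the fifth bullet of this subsection; by the definition of $W_{[\lambda]}$ we have $\lambda_0\in\lambda+\Lambda_r\subset\lambda+\Lambda$. A general simplification underlies both parts: for any $\mu\in\lambda+\Lambda$ and any $\a\in\Phi\setminus\Phi_{[\lambda]}$, the quantity $\<\mu+\rho,\a^\vee\>$ is not an integer, since $\<\Lambda,\a^\vee\>\subset\Z$ and $\<\rho,\a^\vee\>\in\Z$ while $\<\lambda,\a^\vee\>\notin\Z$. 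Consequently such $\a$ neither violate dominance nor contribute to $\textnormal{Stab}_W(\mu)$ under the dot action, so it suffices to control the pairings $\<\mu+\rho,(\a^\lambda_j)^\vee\>$ at the simple coroots of $\Phi_{[\lambda]}$.

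For the first statement I would take $\mu=\lambda_0+\rho$, which lies in $\lambda+\Lambda$ since $\rho\in\Lambda$. For every $\a\in\Phi_{[\lambda]}^+$,
\[
\<\mu+\rho,\a^\vee\>=\<\lambda_0+\rho,\a^\vee\>+\<\rho,\a^\vee\>\geq 0+1>0,
\]
using dominance of $\lambda_0$ (fourth bullet) together with the fact that $\a^\vee$ is a nonzero non-negative integer combination of simple coroots of $\Phi$. Combined with the non-integrality observation, this yields both dominance and regularity of $\mu$.

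For the second statement I would seek $\mu_i=\lambda_0+\nu$ with $\nu\in\Lambda$ satisfying $\<\nu,(\a^\lambda_i)^\vee\>=-\<\lambda_0+\rho,(\a^\lambda_i)^\vee\>$ and $\<\nu,(\a^\lambda_j)^\vee\>$ arbitrarily large for each $j\neq i$. Since $\<\Lambda,(\a^\lambda_i)^\vee\>=\Z$, I first pick $\nu_1\in\Lambda$ realizing the equation. To adjust the remaining pairings without disturbing $\<\cdot,(\a^\lambda_i)^\vee\>$, I consider the map
\[
\Lambda\cap H_0\longrightarrow\Z^{k-1},\qquad\nu\longmapsto\bigl(\<\nu,(\a^\lambda_j)^\vee\>\bigr)_{j\neq i},
\]
where $H_0=\{\eta\in\fh^*:\<\eta,(\a^\lambda_i)^\vee\>=0\}$. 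Linear independence of the simple coroots of $\Phi_{[\lambda]}$ forces this map to have full-rank image in $\Z^{k-1}$, and a standard density argument then produces $\nu_2\in\Lambda\cap H_0$ with $\<\nu_2,(\a^\lambda_j)^\vee\>>0$ for every $j\neq i$. The weight $\mu_i=\lambda_0+\nu_1+N\nu_2$ then works for $N$ sufficiently large; the stabilizer claim is immediate because in the reduced root system $\Phi_{[\lambda]}$ the only positive root with coroot proportional to $(\a^\lambda_i)^\vee$ is $\a^\lambda_i$ itself.

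The step I expect to require the most care is producing the lattice element $\nu_2$ in the open positive orthant, but this reduces to the density of rational points in $\R^{k-1}$ together with the full-rank image property, both of which flow from the linear independence of $\{(\a^\lambda_j)^\vee\}$ in $\fh$; everything else is routine bookkeeping.
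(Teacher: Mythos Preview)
Your proof is correct and follows essentially the same strategy as the paper's: for the regular weight, shift into the open chamber; for the subgeneric weight, first land on the wall $\langle\cdot+\rho,(\a^\lambda_i)^\vee\rangle=0$ and then translate along it by a large multiple of something orthogonal to that wall but positive against the remaining simple coroots of $\Phi_{[\lambda]}$. The organizational differences are minor: you start from the dominant representative $\lambda_0$ and add $\rho$, whereas the paper adds a large dominant integral weight directly to $\lambda$; and you produce the perpendicular direction $\nu_2$ by a full-rank lattice argument on $\Lambda\cap H_0\to\Z^{k-1}$, whereas the paper routes through the weight lattice $\Lambda_{[\lambda]}$ of the subsystem $\Phi_{[\lambda]}$ and lifts back to $\Lambda$. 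Your explicit isolation of the non-integrality observation for $\a\notin\Phi_{[\lambda]}$ is a nice touch that the paper leaves implicit.
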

\begin{proof} Let $\delta=\sum_i m_i \w_i$ be a dominant weight in $\Lambda$. By choosing  $m_i \gg 0$ for all $i$, we can ensure that $\mu:=\lambda +\delta$ is a regular dominant weight in $\lambda +\Lambda$. The construction of $\mu_i$ is as follows:

\emph{Step $1$:} We will show that there is $\mu' \in \lambda+\Lambda$ such that $\<\mu', (\a^\lambda_i)^\vee\>=0$. Equivalently, there is $\mu''\in \Lambda$ such that $\< \mu'', (\a^\lambda_i)^\vee\>=\<\lambda, (\a^\lambda_i)^\vee\>$. Note that the right hand side is an integer . We can choose a set of simple roots  in $\Phi$ containing $\a^\lambda_i$, therefore, there is some $\w\in \Lambda$ such that $\< \w, (\a^\lambda_i)^\vee\>=1$. So we can choose $\mu''=\< \lambda, (\a^\lambda_i)^\vee\>\w$.

\emph{Step $2$:} We will show that there is $\delta \in \Lambda$ such that $\< \delta, (\a^\lambda_i)^\vee\> =0$ and $\< \delta, \b^\vee\> >0$ for any $\b\in \Pi_{[\lambda]}/\{\a^\lambda_i\}$. Let $\Lambda_{[\lambda]}$ be the weight lattice of $E(\lambda)$ corresponding the root system $\Phi_{[\lambda]}$.    Let choose $\delta'\in \Lambda_{[\lambda]}$ such that $\< \delta', (\a^\lambda_i)^\vee\>=0$ and $\< \delta', \b^\vee\>=1$ for all $\b\in \Pi_{[\lambda]}/\{\a^\lambda_i\}$. The restriction of the weight lattice $\Lambda$ on $E(\lambda)$ gives us a full rank sublattice of the weight lattice $\Lambda_{[\lambda]}$. Therefore, there is $m\in \Z_{\geq 0}$ such that $m \delta'$ belongs to the restriction of $\Lambda$ on $E(\lambda)$. Then we can choose $\delta$ to be  a lift of $m \delta'$ in $\Lambda$.

\emph{Step $3$:} We can choose $\mu_i=\mu'+l \delta$ for $l \gg 0$. 
\end{proof}
Let $\widehat{W}_{[\lambda]}:=\{w\in W|w\lambda-\lambda \in \Lambda\}$, equivalently, $\widehat{W}_{[\lambda]}=\{w\in W|w\cdot \lambda -\lambda \in \Lambda\}$. It is easy to see that $\widehat{W}_{[\lambda]}$ is a subgroup of $W$. Define the map $\tau: \widehat{W}_{[\lambda]} \rightarrow \Lambda/\Lambda_r$ as follows: $\tau(w)=w\lambda-\lambda +\Lambda_r$. Note that $w\cdot \lambda -\lambda +\Lambda_r=w\lambda -\lambda+\Lambda_r$.
\begin{Lem}
    The map $\tau$ is a group homomorphism. The kernel of $\tau$ is $W_{[\lambda]}$. As a consequence, we have an  inclusion $\widehat{W}_{[\lambda]}/W_{[\lambda]}\hookrightarrow \Lambda/\Lambda_r$ and $\widehat{W}_{[\lambda]}/W_{[\lambda]}$ is an abelian group.
\end{Lem}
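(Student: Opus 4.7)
The plan is to verify the three claims of the lemma in order, using the standard fact that $W$ acts trivially on the quotient $\Lambda/\Lambda_r$ (equivalently, $w\mu-\mu\in\Lambda_r$ for every $w\in W$ and every $\mu\in\Lambda$). This is the one external input I will need, and it is well known: it suffices to check it on simple reflections, where $s_\alpha\mu-\mu=-\langle\mu,\alpha^\vee\rangle\alpha\in\Lambda_r$.

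First I would check that $\tau$ is well defined, i.e.\ that $w\lambda-\lambda\in\Lambda$ indeed lies in a class of $\Lambda/\Lambda_r$; this is immediate from the definition of $\widehat{W}_{[\lambda]}$. Next I would verify the homomorphism property. For $w_1,w_2\in\widehat{W}_{[\lambda]}$, I rewrite
\[
w_1w_2\lambda-\lambda=w_1(w_2\lambda-\lambda)+(w_1\lambda-\lambda).
\]
Since $w_2\lambda-\lambda\in\Lambda$ and $W$ acts trivially on $\Lambda/\Lambda_r$, the element $w_1(w_2\lambda-\lambda)$ is congruent to $w_2\lambda-\lambda$ modulo $\Lambda_r$. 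Reducing the displayed identity modulo $\Lambda_r$ yields $\tau(w_1w_2)=\tau(w_1)+\tau(w_2)$.

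For the kernel, $w\in\ker\tau$ is equivalent to $w\lambda-\lambda\in\Lambda_r$, which is precisely the defining condition for $w\in W_{[\lambda]}$ recalled in the excerpt. The parenthetical remark that $w\cdot\lambda-\lambda+\Lambda_r=w\lambda-\lambda+\Lambda_r$ (because $w\rho-\rho\in\Lambda_r$ by the same triviality of the $W$-action on $\Lambda/\Lambda_r$) shows that $\tau$ can be defined using either action and agrees with the stated description of $W_{[\lambda]}$. The injection $\widehat{W}_{[\lambda]}/W_{[\lambda]}\hookrightarrow\Lambda/\Lambda_r$ is then the first isomorphism theorem, and the quotient group is abelian because $\Lambda/\Lambda_r$ is.

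No serious obstacle is expected; the entire argument hinges on the single observation that $W$ fixes $\Lambda/\Lambda_r$ pointwise, which is what makes the nonlinear expression $w_1(w_2\lambda-\lambda)$ collapse modulo $\Lambda_r$ to the linear one needed for the homomorphism property.
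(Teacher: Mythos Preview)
Your proof is correct and follows essentially the same route as the paper: both use the decomposition $w_1w_2\lambda-\lambda=w_1(w_2\lambda-\lambda)+(w_1\lambda-\lambda)$ and the fact that $w_1(w_2\lambda-\lambda)\equiv w_2\lambda-\lambda\pmod{\Lambda_r}$ because $w_2\lambda-\lambda\in\Lambda$. You make the underlying fact that $W$ acts trivially on $\Lambda/\Lambda_r$ explicit, whereas the paper leaves it implicit, but the argument is the same.
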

\begin{proof}
    Let $w_1, w_2 \in \widehat{W}_{[\lambda]}$. We have 
    \begin{align*}
        \tau(w_1w_2)-(\tau(w_1)+\tau(w_2))&=(w_1w_2(\lambda)-\lambda+\Lambda_r)-(w_1\lambda-\lambda+w_2(\lambda)-\lambda+\Lambda_r)\\
        &=w_1(w_2\lambda-\lambda)-(w_2\lambda-\lambda)+\Lambda_r\\
        &=\Lambda_r
    \end{align*}
The last equality holds since $w_2\lambda-\lambda \in \Lambda$. So $\tau$ is a group homomorphism. 

The element $w \in \widehat{W}_{[\lambda]}$ is in the kernel of $\tau$ if and only if $w\lambda-\lambda \in \Lambda_r$. This means that $W_{[\lambda]}$ is the kernel of $\tau$.
\end{proof}

The dot action of $W$ on $E$ induces an action of $\widehat{W}_{[\lambda]}$ on $E(\lambda)$ which consists of Euclidean transformations. The group $W_{[\lambda]}$ is the Weyl group of the root system $\Phi_{[\lambda]}$ in $E(\lambda)$, so we can talk about the dominant Weyl chamber under the dot action of $W_{[\lambda]}$. Let $C$ be the subgroup of $\widehat{W}_{[\lambda]}$ consisting of all elements that map the dominant Weyl chamber of $E(\lambda)$ to itself. The following lemma follows by standard results about Euclidean transformations on Euclidean spaces:
\begin{Lem} We have a semi-direct decomposition $\widehat{W}_{[\lambda]} =C\ltimes W_{[\lambda]}$; hence, we have a natural embedding $C\hookrightarrow \Lambda/ \Lambda_r$. If $\lambda$ is a dominant weight,  then $w \cdot \lambda$ is a dominant weight for any $w\in C$. 
\end{Lem}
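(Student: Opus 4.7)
The plan is to exploit the Euclidean action of $\widehat{W}_{[\lambda]}$ on $E(\lambda)$, in which $W_{[\lambda]}$ appears as the (finite) Weyl group of the root system $\Phi_{[\lambda]}$ and therefore acts simply transitively on the Weyl chambers. This is the only non-trivial structural input we need; everything else is bookkeeping using the homomorphism $\tau$ of the previous lemma.

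First I would check that $W_{[\lambda]}$ is normal in $\widehat{W}_{[\lambda]}$: this was already shown, since $W_{[\lambda]} = \ker\tau$. Next, to see that $\widehat{W}_{[\lambda]} = C\cdot W_{[\lambda]}$, I take any $w \in \widehat{W}_{[\lambda]}$ and consider the image (under the dot action) of the dominant Weyl chamber of $\Phi_{[\lambda]}$ in $E(\lambda)$. This image is itself a chamber of $\Phi_{[\lambda]}$, because the conjugation action of $w$ permutes the reflections in $W_{[\lambda]}$ by normality. By simple transitivity of $W_{[\lambda]}$ on chambers, there is a unique $u \in W_{[\lambda]}$ sending it back to the dominant chamber; then $uw \in C$ by definition, and $w = u^{-1}(uw) \in W_{[\lambda]}\cdot C = C\cdot W_{[\lambda]}$. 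Simple transitivity also gives $C \cap W_{[\lambda]} = \{1\}$, since only the identity in $W_{[\lambda]}$ stabilizes the dominant chamber. Combined with normality, this yields $\widehat{W}_{[\lambda]} = C \ltimes W_{[\lambda]}$.

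The embedding $C \hookrightarrow \Lambda/\Lambda_r$ is then immediate from the previous lemma: restrict $\tau$ to $C$; its kernel is $C \cap W_{[\lambda]} = \{1\}$, so $\tau|_C$ is injective.

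Finally, for the dominance statement, I would use the reformulation from the bullet list that $\lambda$ is dominant if and only if $\langle\lambda+\rho,\alpha^\vee\rangle \geq 0$ for all $\alpha \in \Phi^+_{[\lambda]}$, i.e.\ $\lambda$ lies in the closed dot-dominant chamber (with respect to $\Phi_{[\lambda]}$). Since $w \in C$ preserves this closed chamber by definition, $w\cdot\lambda$ also lies in it. Moreover $w\cdot\lambda - \lambda \in \Lambda$, so by the first bullet $\Phi^+_{[w\cdot\lambda]} = \Phi^+_{[\lambda]}$; hence $w\cdot\lambda$ is dominant. The main delicate point to be careful about is that $\lambda$ need not literally lie in $E(\lambda)$, so the chamber-stabilization argument has to be applied to the relevant affine translate (equivalently, to $\lambda^\sharp$), but the inequality $\langle w\cdot\lambda+\rho,\alpha^\vee\rangle = \langle\lambda+\rho, w^{-1}\alpha^\vee\rangle \geq 0$ follows from the fact that $w^{-1} \in C$ permutes $\Phi^+_{[\lambda]}$ and $\lambda$ is dominant.
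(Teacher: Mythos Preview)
Your argument is correct and is precisely the ``standard result about Euclidean transformations'' that the paper invokes without proof: you have simply unpacked that reference by using normality of $W_{[\lambda]}$ (as $\ker\tau$), simple transitivity of the finite reflection group $W_{[\lambda]}$ on its chambers, and the observation that the linear part of any $w\in C$ permutes $\Phi_{[\lambda]}^+$. There is no substantive difference in approach; your write-up just makes explicit what the paper leaves to the reader.
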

\section{Category O and Harish-Chandra bimodules}\label{sec: Cat O and HC}

\subsection{Category $O$} \label{ssec: Cat O}\

Category $O$ is the full subcategory of the category of finitely generated $U(\g)$-modules such that the action of $\fh$ on each object in $O$ is semisimple and the action of $\n$ on each object in $O$ is locally nilpotent. We will recall some facts about the category $O$ based on \cite{H08}. 

Category $O$ is Artinian, i.e., each object in $O$ has finite length. All irreducible modules in $O$ are of the form $L(\lambda)$, the unique irreducible quotient of the Verma module $\Delta(\lambda)$. If $V$ is a finite dimensional $\g$-representation and $M$ is an object in $O$ then $V\otimes M$ also belongs to $O$.

 
 Let us recall the root lattice $\Lambda_r$ in $\fh^*$. Let $O_\lambda$ be the full subcategory of $O$ consisting of all objects whose weights lie in $\lambda+\Lambda_r$ and whose supports in Spec $Z$ are $\{\chi_\lambda\}$. The category $O$ decomposes into $\bigoplus_\lambda O_\lambda$ where $\lambda$ runs over all dominant weights in $\fh^*$. Let $\pr_\lambda: O\rightarrow O_\lambda$ be the natural projection. Both $O$ and $O_\lambda$ have enough projectives. If $\lambda$ is dominant then $\Delta(\lambda)$ is a projective object in both $O_\lambda$ and $O$. All simple objects of $O_\lambda$ are of the form $L(w\cdot \lambda)$ for some $ w\in W_{[\lambda]}$.

 Let $O_{\chi}=\bigoplus O_{\lambda'}$ where $\lambda'$ runs over all dominant weights in the $W$-orbit $\chi$. Then $O_{\chi}$ is the full subcategory of the category $O$ consisting of all objects whose supports in Spec $Z$ are $\{\chi\}$. We have a direct sum decomposition $O=\bigoplus O_{\chi}$. If $\lambda$ is integral and dominant then $O_{\chi_\lambda}=O_\lambda$. Let $\pr_{\chi}: O\rightarrow O_{\chi}$ be the projection.

Let $\lambda$ and $\mu$ be two dominant weights such that $\mu-\lambda\in \Lambda$. Let $L(\mu-\lambda)$ be the finite dimensional irreducible $\g$-representation whose the highest weight is contained in the orbit $W(\mu-\lambda)$.

\begin{Def} \label{def translation functor}

a) For any finite dimensional $\g$-representation $V$, the \emph{projective functor} $\CP_{\chi'\rightarrow \chi}^V:O_{\chi'} \rightarrow O_{\chi}$ is defined by $\pr_{\chi}(V\otimes M)$ for any $M$ in $O_{\chi'}$.

b) Suppose there is a pair of dominant weights $(\lambda, \mu)$ such that $\mu \in \chi, \lambda\in \chi'$ and $\mu-\lambda \in \Lambda$. The \emph{translation functor} $\CT_{\lambda \rightarrow \mu}: O_{\chi'}\rightarrow O_{\chi}$ is defined by $\CP_{\chi'\rightarrow \chi}^{L(\mu-\lambda)}$.

\end{Def}

The projective functor $\CP^V_{\chi'\rightarrow \chi}$ sends projective objects in $O_{\chi'}$ to projective objects in $O_{\chi}$. We will need the following lemma about the translation functor , see \cite[$\mathsection 7.6$]{H08}. 
\begin{Lem}\label{lem image of Verma} Let $(\mu, \lambda)$ be a pair of dominant weights such that $ \mu \in \chi, \lambda \in \chi'$ and $\mu-\lambda \in \Lambda$. Assume that $W_\lambda\subset W_\mu$. Then $\CT_{\lambda\rightarrow \mu}(\Delta(w\cdot \lambda))=\Delta(w\cdot \mu)$ for any $w\in W_{[\lambda]}$.
\end{Lem}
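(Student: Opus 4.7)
The plan is to compute $\CT_{\lambda\to\mu}(\Delta(w\cdot\lambda)) = \pr_\chi(L(\mu-\lambda)\otimes\Delta(w\cdot\lambda))$ by analyzing the Verma flag on the tensor product and then identifying which factors survive the projection $\pr_\chi$. First I would invoke the standard fact that tensoring a Verma module with a finite-dimensional representation produces a module with a Verma flag: $L(\mu-\lambda)\otimes\Delta(w\cdot\lambda)$ admits a filtration whose successive subquotients are the Verma modules $\Delta(w\cdot\lambda+\xi)$, with $\xi$ ranging over the weights of $L(\mu-\lambda)$ counted with multiplicity.

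Applying the exact projection $\pr_\chi$ keeps only those factors $\Delta(w\cdot\lambda+\xi)$ for which $w\cdot\lambda+\xi\in W\cdot\mu$. Writing this condition as $\xi=u(\mu+\rho)-w(\lambda+\rho)$ for some $u\in W$, one immediately sees that $\xi_0:=w(\mu-\lambda)$ (corresponding to $u=w$) works, since
\[
w\cdot\lambda+w(\mu-\lambda) \;=\; w(\lambda+\rho)-\rho+w(\mu-\lambda) \;=\; w(\mu+\rho)-\rho \;=\; w\cdot\mu,
\]
and $\xi_0\in W(\mu-\lambda)$ is an extremal weight of $L(\mu-\lambda)$, hence occurs with multiplicity one.

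The heart of the argument is to show that $\xi_0$ is the unique surviving weight. Since the weights of $L(\mu-\lambda)$ lie in the convex hull of $W(\mu-\lambda)$, any $\xi$ satisfies $\|\xi\|\le\|\mu-\lambda\|$. Expanding both sides using the identity $\mu-\lambda=(\mu+\rho)-(\lambda+\rho)$ gives the inequality $\langle w^{-1}u(\mu+\rho),\lambda+\rho\rangle\ge\langle\mu+\rho,\lambda+\rho\rangle$. On the other hand, since $\mu$ and $\lambda$ are dominant, both $\mu+\rho$ and $\lambda+\rho$ lie in the closed dominant chamber for the integral Weyl group $W_{[\lambda]}=W_{[\mu]}$, and the standard fact that $\mu+\rho - v(\mu+\rho)$ is a non-negative combination of simple integral roots for every $v\in W_{[\lambda]}$ yields the reverse inequality. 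Equality then forces $w^{-1}u\in W_\mu W_\lambda$, which under the hypothesis $W_\lambda\subset W_\mu$ collapses to $w^{-1}u\in W_\mu$, whence $u\cdot\mu=w\cdot\mu$ and $\xi=\xi_0$.

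Assembling the pieces, $\pr_\chi(L(\mu-\lambda)\otimes\Delta(w\cdot\lambda))$ has a Verma flag consisting of a single copy of $\Delta(w\cdot\mu)$, yielding $\CT_{\lambda\to\mu}(\Delta(w\cdot\lambda))\cong\Delta(w\cdot\mu)$. The main obstacle will be the uniqueness step, where the hypothesis $W_\lambda\subset W_\mu$ is essential: without it, additional cosets of $u$ would satisfy the equality condition, further Verma factors would survive $\pr_\chi$, and the conclusion would fail (instead one would get a module with a longer Verma filtration indexed by $W_\mu W_\lambda / W_\mu$).
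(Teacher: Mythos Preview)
Your overall strategy is exactly the paper's: use the standard Verma flag on $L(\mu-\lambda)\otimes\Delta(w\cdot\lambda)$ and show that a single subquotient survives $\pr_\chi$. The paper outsources the combinatorial core---observation~(1), that $w\cdot\lambda+\nu\in W\cdot\mu$ forces $\nu=w(\mu-\lambda)$---to \cite[Lemma~7.5]{H08}, while you unpack this into an explicit norm computation; the underlying argument is the same.

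There is, however, one step you skipped. You write $\xi=u(\mu+\rho)-w(\lambda+\rho)$ with $u\in W$ arbitrary, and then apply the fact that $\mu+\rho-v(\mu+\rho)$ is a non-negative integral-root combination for $v=w^{-1}u\in W_{[\lambda]}$. But nothing so far places $v$ in the integral Weyl group, and since $\mu+\rho$ is dominant only for $W_{[\lambda]}$ (not for $W$), the reverse inequality you need is not available for general $v\in W$. This is precisely the first equivalence in the paper's observation~(1), namely $w\cdot\lambda+\nu\in W\cdot\mu\Leftrightarrow w\cdot\lambda+\nu\in W_{[\lambda]}\cdot\mu$, and it needs its own one-line justification: every weight $\xi$ of $L(\mu-\lambda)$ lies in $(\mu-\lambda)+\Lambda_r$, and $w\cdot\lambda-\lambda\in\Lambda_r$ because $w\in W_{[\lambda]}$, hence
\[
u\cdot\mu-\mu \;=\; (w\cdot\lambda-\lambda)+\bigl(\xi-(\mu-\lambda)\bigr)\;\in\;\Lambda_r,
\]
which forces $u\in W_{[\mu]}=W_{[\lambda]}$. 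With this in hand your norm argument and the equality analysis (using $W_\lambda\subset W_\mu$) go through cleanly.
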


The module $L(\mu-\lambda)\otimes \Delta(w\cdot \lambda)$ has a filtration whose successive quotients are $\Delta(w\cdot \lambda+\nu)$ in which $\nu$ runs over all weights of $L(\mu-\lambda)$ counted with multiplicities. Lemma \ref{lem image of Verma} follows from two observations: $(1)$ $ w\cdot \lambda+\nu \subset W \cdot \mu \Leftrightarrow w\cdot \lambda+\nu \in W_{[\lambda]}\cdot \mu \Leftrightarrow \nu=w(\mu-\lambda)$; and $(2)$ $ \dim_\C L(\mu-\lambda)_{w(\mu-\lambda)}=1$. The first observation is a consequence of Lemma $7.5$ in \cite{H08} and the second observation is trivial.


\subsection{Harish-Chandra bimodules}\

On any $U(\g)$-bimodule $M$, we have the adjoint $\g$-action defined by $\ad_xm=xm-mx$ for any $x\in \g$ and $m \in M$. A bimodule $M$ is called Harish-Chandra if it satisfies the following axioms:
\begin{enumerate}[label={(\bfseries R\arabic*)}]
\itemsep-0.3em 
\item The adjoint $\g$-action on $M$ is locally finite.
\item  \label{R2} $M$ is finitely generated as a bimodule.
\end{enumerate}

Let $\HC(U)$ denote the full subcategory of the category of $U(\g)$-bimodules consisting of all Harish-Chandra bimodules. We now go over some facts about $\HC(U)$ based on \cite{BG80}.

For each finite dimensional $\g$-representation $V$, the tensor product $V\otimes U(\g)$ has a natural $U(\g)$-bimodule structure as follows
\[ x(v\otimes u)=xv\otimes u+v\otimes xu, \tab (v\otimes u)x=v\otimes ux,\]
for $x\in \g, v\in V$ and $u\in U(\g)$. Similarly, we can equip $U(\g)\otimes V$ with a $U(\g)$-bimodule structure by 
\[ x(u\otimes v)=xu \otimes v, \tab (u\otimes v)x=ux\otimes v-u\otimes xv.\]
With these bimodules structures, both $U(\g)\otimes V $ and $V\otimes U(\g)$ become Harish-Chandra bimodules. Moreover, they are isomorphic to each other via a natural isomorphism:
\[ V\otimes U(\g) \rightarrow U(\g) \otimes V: \tab v\otimes u\mapsto \sum u_{(1)}\otimes S(u_{(2)})v,\]
where $S$ is the antipode of the Hopf algebra $U(\g)$ and $\Delta(u)=\sum u_{(1)}\otimes u_{(2)}$ is the comultiplication of $u$; here we use the Sweedler notation for the coproduct. For finite dimensional $\g$-modules $V$ and $V'$, we have an isomorphism of Harish-Chandra bimodules
\[(V\otimes U(\g))\otimes_{U(\g)}(V'\otimes U(\g)) \cong (V\otimes V')\otimes U(\g).\]

For any $M$ in $\HC(U)$, the space $\Hom_\g(V, M)$ is a module over $\CZ:=Z\otimes Z$ the center of $U(\g) \otimes U(\g)^{\textnormal{op}}$. Let $M^{\ad}$ be the space $M$ viewed as a $\g$-module under the adjoint $\g$-action.
\begin{Prop}\label{prop property of HCbim}
a) $\Hom_{\HC(U)}(V\otimes U(\g), M)=\Hom_\g(V, M^{\ad})$. The bimodule $V\otimes U(\g)$ is projective in $\HC(U)$.

b) Any Harish-Chandra bimodule is the quotient of $V\otimes U(\g)$ for some finite dimensional $\g$-representation $V$.

c) The space $\Hom_\g(V, M^{\ad})$ is finitely generated as a left $Z$-module and as a right $Z$-module for any finite dimensional  $\g$-representation $V$.

\end{Prop}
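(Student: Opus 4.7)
The plan is to set up parts (a), (b), (c) in sequence, since (b) relies on the adjunction proved in (a), and (c) relies on the surjection produced in (b) together with Kostant's theorem.

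\textbf{Part (a).} I would first check directly that the map $v \mapsto v \otimes 1$ is $\g$-equivariant from $V$ into $(V \otimes U(\g))^{\ad}$; the computation $\ad_x(v \otimes 1) = xv \otimes 1 + v \otimes x - v \otimes x = xv \otimes 1$ makes this transparent. Given this, I define mutually inverse maps
\[
\Hom_{\HC(U)}(V \otimes U(\g), M) \rightleftarrows \Hom_\g(V, M^{\ad})
\]
by $\phi \mapsto (v \mapsto \phi(v \otimes 1))$ and $f \mapsto (v \otimes u \mapsto f(v) u)$. Right $U$-linearity of $\phi_f$ is immediate; left $U$-linearity follows from $\phi_f(x(v\otimes u)) = f(xv)u + f(v)(xu) = (xf(v)-f(v)x)u + f(v)xu = x\phi_f(v\otimes u)$, using that $f$ is equivariant for the adjoint action. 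Projectivity of $V \otimes U(\g)$ in $\HC(U)$ is then an immediate consequence: for an exact sequence of Harish-Chandra bimodules, the induced sequence of adjoint $\g$-modules is a sequence of locally finite, hence semisimple, $\g$-modules, on which $\Hom_\g(V,-)$ is exact.

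\textbf{Part (b).} Given $M \in \HC(U)$ with bimodule generators $m_1, \dots, m_n$, local finiteness of the adjoint action lets me enclose them in a finite-dimensional $\g$-stable subspace $V \subset M^{\ad}$. Via part (a), the inclusion $V \hookrightarrow M^{\ad}$ produces a bimodule map $V \otimes U(\g) \to M$ sending $v \otimes u \mapsto v u$. The image contains $V \cdot U(\g)$, which is stable under right multiplication by $U(\g)$; the identity $x v = \ad_x(v) + v x$ valid for $x \in \g$, $v \in V$, together with an induction on PBW degree, shows $U(\g) \cdot V \subseteq V \cdot U(\g)$, so the image is in fact a sub-bimodule containing each $m_i$ and therefore equals $M$.

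\textbf{Part (c).} The main obstacle here is keeping track of the $Z$-bimodule structure on $\Hom_\g(V, M^{\ad})$, where the left and right $Z$-actions come from the left and right $U(\g)$-actions on $M$. Applying part (b), I choose a surjection $V' \otimes U(\g) \twoheadrightarrow M$ in $\HC(U)$; applying $\Hom_\g(V, -)$ to the adjoint $\g$-actions, which are locally finite and hence semisimple, yields a surjection of $Z$-bimodules
\[
\Hom_\g(V, (V' \otimes U(\g))^{\ad}) \twoheadrightarrow \Hom_\g(V, M^{\ad}).
\]
On $V' \otimes U(\g)$ the left and right $Z$-actions both act by multiplication on the $U(\g)$-tensor-factor (since $Z$ is central), so they coincide, and it suffices to show finite generation of the source over $Z$. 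Decomposing $U(\g)$ as a $\g$-module (under the adjoint action) via $U(\g) \cong \bigoplus_{W \in \mathrm{Irr}(\g)} W \otimes \Hom_\g(W, U(\g))$ — which is the isomorphism already recalled in the previous lemma — I obtain
\[
\Hom_\g(V, V' \otimes U(\g)) \cong \bigoplus_W \Hom_\g(V, V' \otimes W) \otimes_\C \Hom_\g(W, U(\g)),
\]
a finite direct sum since only finitely many $W$ appear in $V^* \otimes V'$. By Kostant's theorem each summand $\Hom_\g(W, U(\g))$ is a free $Z$-module of finite rank, so the whole thing is finitely generated over $Z$; the surjection pushes this down to $\Hom_\g(V, M^{\ad})$ for both the left and right $Z$-actions.
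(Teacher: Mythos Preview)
The paper does not give its own proof of this proposition; it is recorded as a fact from \cite{BG80}. So let me assess your argument directly.

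Parts (a) and (b) are fine. The adjunction in (a) is set up correctly, and the projectivity follows exactly as you say since the adjoint $\g$-module underlying any Harish-Chandra bimodule is semisimple. In (b) your argument that the image is a sub-bimodule is correct, though in fact this is automatic once you know the map is a bimodule homomorphism.

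There is a genuine gap in (c). Your claim that on $V'\otimes U(\g)$ the left and right $Z$-actions both act by multiplication on the $U(\g)$-factor, and hence coincide, is false. The right action $(v'\otimes u)z=v'\otimes uz$ is indeed of that form, but the left action is the diagonal one coming from the comultiplication: for $a\in U(\g)$ one has $a\cdot(v'\otimes u)=\sum a_{(1)}v'\otimes a_{(2)}u$. For the Casimir $C$ one gets $C\cdot(v'\otimes u)=Cv'\otimes u+v'\otimes Cu+2\Omega(v'\otimes u)$ with $\Omega$ the split Casimir, which is not $v'\otimes Cu$. Concretely, tensoring over $U$ with a Verma $\Delta(\mu)$ shows that the right $Z$-action on $V'\otimes\Delta(\mu)$ is via the single character $\chi_\mu$, while the left $Z$-action has several generalized eigenvalues (one for each block of the Verma filtration), so the two actions cannot agree on $V'\otimes U(\g)$. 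Consequently your Kostant decomposition, which is visibly compatible with the right $Z$-action, does not automatically control the left one.

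The fix is short and uses an isomorphism already recorded in the paper: $V'\otimes U(\g)\cong U(\g)\otimes V'$ as bimodules. On $U(\g)\otimes V'$ the \emph{left} action is honestly $x(u\otimes v')=xu\otimes v'$, so the left $Z$-action is multiplication on the $U$-factor, and the adjoint $\g$-module is again $U(\g)^{\ad}\otimes V'$. Running your same Kostant decomposition on $\Hom_\g(V,(U(\g)\otimes V')^{\ad})$ now gives finite generation over $Z$ for the left action; your original argument already handles the right action. With this correction the proof goes through.
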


\begin{Rem} \label{rem support of HCbim}The support of $V\otimes U(\g)$ over Spec $\CZ$ is the image of 
\[ \mathcal{S}=\cup_{\nu, V_{\nu}\neq 0}\{(\lambda, \mu)|\mu-\lambda =\nu\}\]
under the natural map $\fh^* \x \fh^* \rightarrow \fh^*/(W, \cdot) \x \fh^*/(W, \cdot)=\Spec \;\CZ$, see Theorem $2.5$ in \cite{BG80}. Therefore, the projections from the support of $V\otimes U(\g)$ to the two copies of $\fh^*/(W, \cdot)$ are finite morphisms.
\end{Rem}


The following result is Corrollary $5.7$ in \cite{BG80}:
\begin{Prop} \label{prop char of HCbim}
a) Let $J\subset \CZ$ be an ideal of finite codimension. Then there are only finitely many simple Harish-Chandra bimodules $L$ such that $JL=0$.

b) Let $Y$ be a $U(\g)$-bimodule such that  the adjoint $\g$-action on $Y$ is locally finite. If $Y$ is annihilated by an ideal $J$ in $\CZ$ of finite codimension and $\Hom_\g(V,Y)< \infty$ for any finite dimensional $\g$-representation $V$, then $Y$ is a Harish-Chandra bimodule of finite length.

\end{Prop}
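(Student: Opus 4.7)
The plan is to reduce both parts to a fixed pair of central characters and then exploit the surjection $V\otimes U(\g)\twoheadrightarrow L$ from Proposition \ref{prop property of HCbim}(b), together with a multiplicity count on adjoint $\g$-isotypic components. Throughout, I will use that $\Hom_\g(V_0,-)$ is exact on $\g$-locally finite modules, which holds because $\g$ is semisimple.

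For part (a), since $J\subset\CZ=Z\otimes Z$ has finite codimension, $\Spec(\CZ/J)$ consists of finitely many points $(\chi,\chi')$, so it suffices to bound the number of simple HC bimodules $L$ with $\m_\chi L=L\m_{\chi'}=0$ for a fixed such pair. If such $L$ exists then $\chi$ and $\chi'$ are compatible; pick dominant weights $\lambda\in\chi$, $\lambda'\in\chi'$ with $\lambda'-\lambda\in\Lambda$ and a finite-dimensional $\g$-representation $V$ whose weights contain $\lambda'-\lambda$. By Remark \ref{rem support of HCbim}, $(\chi,\chi')$ lies in the support of $V\otimes U(\g)$, so by Proposition \ref{prop property of HCbim}(b) any such $L$ is a quotient of the bimodule
\[
B:=(V\otimes U(\g))\big/\bigl(\m_\chi\cdot(V\otimes U(\g))+(V\otimes U(\g))\cdot\m_{\chi'}\bigr).
\]
By Proposition \ref{prop property of HCbim}(c), each isotypic component $\Hom_\g(V',B^{\ad})$ is a finitely generated module over $\CZ/(\m_\chi\otimes 1+1\otimes\m_{\chi'})\cong\C$, hence finite-dimensional. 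As in part (b) below, this multiplicity control forces $B$ to have finite bimodule length, so only finitely many simple HC bimodules occur as quotients of $B$.

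For part (b), the support decomposition again reduces us to the case where $Y$ is supported at a single $(\chi,\chi')$, so that $\m_\chi^N Y=Y\m_{\chi'}^N=0$ for some $N$; every simple subquotient of $Y$ is then killed by $\m_\chi$ on the left and by $\m_{\chi'}$ on the right, so part (a) supplies a finite list $L_1,\dots,L_k$ of possible composition factors. Pick a finite-dimensional $V_0$ with $c_j:=\dim\Hom_\g(V_0,L_j^{\ad})\ge 1$ for all $j$, and set $d:=\dim\Hom_\g(V_0,Y^{\ad})<\infty$. Exactness of $\Hom_\g(V_0,-)$ applied to any finite composition series gives
\[
\bigl(\min_j c_j\bigr)\cdot\textnormal{length}(M)\le\dim\Hom_\g(V_0,M^{\ad})\le d
\]
for every finite-length sub-bimodule $M\subseteq Y$. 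Any finitely generated sub-bimodule $UyU\subseteq Y$ is contained in $U\cdot V\subseteq Y$ for a finite-dimensional $\g$-submodule $V\ni y$ of $Y^{\ad}$, hence is finitely generated as a left module over the Noetherian algebra $U^\chi$; by the same multiplicity estimate applied to any descending chain of sub-bimodules, $UyU$ has finite bimodule length, uniformly bounded by some constant $C$. Picking a finitely generated sub-bimodule $M\subseteq Y$ of maximal length $C$ and noting that for any $y\in Y$ the sum $M+UyU$ is again finitely generated of length $\le C$ containing $M$, we conclude $M+UyU=M$ and so $Y=M$ has finite length; in particular $Y$ is finitely generated and Harish-Chandra.

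The main obstacle is the apparent circularity between (a) and (b): both rely on the same multiplicity estimate, and (b) invokes (a) for the enumeration of simple composition factors. This is resolved by establishing (a) independently, for instance via the Bernstein-Gelfand tensor construction $L\mapsto L\otimes_{U(\g)}\Delta(\lambda')$ for a dominant $\lambda'\in\chi'$: this functor embeds the isomorphism classes of simple HC bimodules with central characters $(\chi,\chi')$ into the simple objects of $O_\chi$, which form a finite set parametrized by cosets in $W_{[\lambda]}/W_\lambda$ for a dominant $\lambda\in\chi$. With (a) established independently, the multiplicity argument of (b) then runs without circularity and also retroactively supplies the finite-length statement for $B$ used at the end of (a).
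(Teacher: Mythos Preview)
The paper does not give its own proof of this proposition; it simply quotes it as Corollary~5.7 of \cite{BG80}. So there is no argument in the paper to compare against, and the question is only whether your proof stands on its own.

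Your argument for part~(b) is correct. Once (a) is known, the multiplicity bound via a fixed $V_0$ with $\Hom_\g(V_0,L_j)\neq 0$ for every possible simple subquotient $L_j$ is the standard device, and your Noetherianity argument showing that each $U y U$ has finite length, followed by the maximality trick to get $Y=M$, is clean.

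For part~(a), however, your first argument has a genuine gap. You write that since $(\chi,\chi')$ lies in the support of $V\otimes U(\g)$, ``by Proposition~\ref{prop property of HCbim}(b) any such $L$ is a quotient of the bimodule $B$.'' But Proposition~\ref{prop property of HCbim}(b) only guarantees that each simple $L$ is a quotient of \emph{some} $V_L\otimes U(\g)$; it gives no uniform $V$. Knowing that $(\chi,\chi')$ lies in the support of $V\otimes U(\g)$ does not force $\Hom_\g(V,L^{\ad})\neq 0$, which is what you would need. So the finite-length claim for $B$ does not by itself bound the number of simples at $(\chi,\chi')$.

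You do recognize a circularity and propose to break it via the Bernstein--Gelfand functor $T_{\lambda'}=\bullet\otimes_{U}\Delta(\lambda')$. This is the right idea, but the sentence ``embeds \dots\ into the simple objects of $O_\chi$'' is only justified when $\lambda'$ is regular, since then $T_{\lambda'}$ is an exact equivalence (Theorem~\ref{Thm BGG equivalence}(a)) and therefore sends simples to simples. If $\chi'$ is singular there is no regular dominant $\lambda'\in\chi'$, and in that case $T_{\lambda'}$ is merely fully faithful and need not preserve simplicity; full faithfulness alone does not inject into a finite set. The cleanest repair is to bypass $T_{\lambda'}$ and invoke directly the classification in Proposition~\ref{prop indecom HCbim}: the indecomposable projectives of $_\chi\HC^1_{\chi'}$ are parametrized by the finite set $\Xi_{\chi,\chi'}$, and since every object of $_\chi\HC^1_{\chi'}$ is covered by a finite direct sum of these, the simples are in bijection with the same finite set. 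With (a) secured this way, your argument for (b), and the retroactive finite-length claim for $B$, go through.
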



Let $\HC_{\chi'}^1$ be the full subcategory of the category $\HC(U)$ consisting of all objects $M$  such that $M \m_{\chi'}=0$. Let $_\chi\HC_{\chi'}^1$ denote the full subcategory of  the category $\HC(U)$ consisting of all objects $M$ such that $\m_{\chi}^k M=0$ for some $k$ and $M\m_{\chi'}=0$.

One of the main results in \cite{BG80} is the classification of indecomposable projective objects in $\HC_{\chi'}^1$. To state the result, we need to introduce some notation. Let $\tilde{\Xi}:=\{(\mu,\lambda)\in \fh^*\x \fh^*|\mu-\lambda\in \Lambda\}$. The Weyl group $W$ acts diagonally on the set $\tilde{\Xi}$ under the dot action. Let $\Xi$  be the quotient set of $\tilde{\Xi}$ under this $W$-action. A pairing $(\mu, \lambda)$ is \emph{proper} if $\lambda$ is dominant and $\mu$ is minimal in the orbit $W_\lambda \cdot \mu$. Any element $\xi$ in $\Xi$ can always be represented by some proper pair $(\mu, \lambda)$.

Let $\Xi^r_{\chi'}$ be the subset of $\Xi$ consisting of all elements $\xi$ which  can be represented by a pair $(\mu, \lambda)$ for $\lambda \in \chi'$. Let $\Xi_{\chi, \chi'}$ be the  subset of $\Xi$ consisting of all elements $\xi$ which can be represented by a pair $(\mu, \lambda)$ such that $\mu \in \chi $ and $\lambda \in \chi'$. 
\begin{Prop}\label{prop indecom HCbim}
a) There is one-to-one correspondence between the indecomposable projective objects in $\HC^1_{\chi'}$ and the elements of the set $\Xi^r_{\chi'}$. Similarly, there is one to one correspondence between indecomposable projective objects in $_\chi\HC^1_{\chi'}$ and  the elements of the set $\Xi_{\chi, \chi'}$.

b) Let $P_\xi$ be the indecomposable projective object corresponding to $\xi$ in $\Xi$. If $(\mu, \lambda)$ is a proper pairing representing $\xi$, then $P_{\xi}\otimes_U \Delta(\lambda)\cong P(\mu)$; here $P(\mu)$ is the projective cover of the simple object $L(\mu)$. 

\end{Prop}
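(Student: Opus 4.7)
The plan is to exploit a Bernstein--Gelfand style functor
\[ \phi_\lambda: \HC^1_{\chi'} \to O, \qquad M \mapsto M \otimes_{U(\g)} \Delta(\lambda), \]
where $\lambda \in \chi'$ is chosen dominant. This transports the indecomposable projectives we want to classify into indecomposable projectives of category $O$, which are well understood as the $P(\mu)$. First, I would check that $\HC^1_{\chi'}$ has enough projectives of a controlled form: by Proposition~\ref{prop property of HCbim}(b) every object of $\HC(U)$ is a quotient of some $V\otimes U(\g)$, so
\[ P_V := (V\otimes U(\g))/(V\otimes U(\g))\m_{\chi'} \]
is a family of projective generators of $\HC^1_{\chi'}$. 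Part (c) of the same proposition shows these are finitely generated over $\CZ$, so Krull--Schmidt holds and every indecomposable projective of $\HC^1_{\chi'}$ appears as a direct summand of some $P_V$. For $_\chi\HC^1_{\chi'}$ one additionally applies the left block projection by $\m_\chi^k$ for $k\gg 0$.

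Next, I would compute $\phi_\lambda(P_V)$. Since $\m_{\chi'}$ annihilates $\Delta(\lambda)$, we get $\phi_\lambda(P_V) \cong V \otimes \Delta(\lambda)$. This has a Verma filtration with subquotients $\Delta(\lambda+\nu)$ as $\nu$ runs over the weights of $V$; since $\lambda$ is dominant, it is a projective object of $O$, and Krull--Schmidt decomposes it into indecomposable projective covers $P(\mu)$ with $\mu\in \lambda+\Lambda$. The core technical step is to show that $\phi_\lambda$ is fully faithful on the subcategory generated by the $P_V$, i.e.
\[ \Hom_{\HC(U)}(P_V, P_{V'}) \xrightarrow{\sim} \Hom_O(V\otimes \Delta(\lambda), V'\otimes \Delta(\lambda)). \]
This can be carried out along the lines of \cite{BG80}, using the adjunction between tensoring by finite dimensional representations and taking locally finite vectors, together with the freeness of $\Hom_\g(V, U(\g))$ over $Z$ provided by the Kostant theorem. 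Combined with Krull--Schmidt, this yields a bijection between indecomposable summands of the $P_V$ in $\HC^1_{\chi'}$ and indecomposable summands of the $V\otimes \Delta(\lambda)$ in $O$.

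Finally, I would extract the parametrization. Two summands $P(\mu), P(\mu')$ of $V\otimes \Delta(\lambda)$ coincide iff $\mu'\in W_\lambda\cdot \mu$. Translating to pairs modulo the diagonal dot action of $W$ on $\tilde{\Xi}$, fixing $\lambda$ dominant selects a canonical representative of the second coordinate in its $W$-orbit, and the \emph{proper pairing} condition ($\mu$ minimal in $W_\lambda\cdot \mu$) selects a canonical representative of the first. Proper pairings $(\mu,\lambda)$ with $\lambda\in \chi'$ therefore correspond bijectively to isomorphism classes of indecomposable projectives of $\HC^1_{\chi'}$, which is exactly $\Xi^r_{\chi'}$, giving part (a). For $_\chi\HC^1_{\chi'}$ the additional left-support condition forces $\mu\in \chi$, cutting down to $\Xi_{\chi,\chi'}$. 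Part (b) is then immediate from the construction: by definition $P_\xi$ is the indecomposable summand of some $P_V$ whose $\phi_\lambda$-image is $P(\mu)$, so $P_\xi \otimes_{U(\g)} \Delta(\lambda)\cong P(\mu)$.

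The main obstacle is the full-faithfulness claim: exactness and preservation of projectivity by $\phi_\lambda$ are formal, but matching the Hom spaces on the two sides requires the substantive Bernstein--Gelfand analysis of $\Hom_\g(V,U^\chi)$ as a $Z$-module and of how this structure interacts with tensoring against $\Delta(\lambda)$. Everything downstream---the parametrization by proper pairings and the identity $P_\xi\otimes_{U(\g)}\Delta(\lambda)\cong P(\mu)$---is an essentially combinatorial consequence once that bijection on indecomposables is in hand.
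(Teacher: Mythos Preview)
The paper does not give its own proof of this proposition; immediately after the statement it simply refers to \cite[Section~5.4]{BG80}. Your sketch is essentially the Bernstein--Gelfand argument underlying that reference and Theorem~\ref{Thm BGG equivalence}, so the approach is the right one and matches what the paper invokes.

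One imprecision worth flagging: you write that two summands $P(\mu), P(\mu')$ of $V\otimes\Delta(\lambda)$ coincide iff $\mu'\in W_\lambda\cdot\mu$. In category $O$ this is false as stated, since $P(\mu)\cong P(\mu')$ forces $\mu=\mu'$. The correct statement is on the other side of the functor: the elements of $\Xi^r_{\chi'}$ represented by $(\mu,\lambda)$ and $(\mu',\lambda)$ (with $\lambda$ dominant) agree iff $\mu'\in W_\lambda\cdot\mu$, and the indecomposable projectives of $O$ that actually occur as summands of some $V\otimes\Delta(\lambda)$ are precisely the $P(\mu)$ with $\mu$ minimal in $W_\lambda\cdot\mu$ (the class $\SP_\lambda$ in the paper's notation). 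With that correction your final paragraph goes through.
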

 We refer more details about the classification to Section $5.4$ in \cite{BG80}.
\begin{Lem} Suppose there is a pair of dominant weights $(\mu, \lambda)$ such that $\mu\in \chi, \lambda\in \chi'$ and $\mu-\lambda \in \Lambda$, then the number of elements in $\Xi_{\chi, \chi'}$ is equal to the number of double cosets $W_\mu\backslash \widehat{W}_{[\lambda]}/W_\lambda$. Otherwise, the set $\Xi_{\chi, \chi'}$ is empty.
\end{Lem}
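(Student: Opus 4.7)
My plan is to normalize representatives of each class in $\Xi_{\chi,\chi'}$ to the form $(w \cdot \mu, \lambda)$ with $w \in \widehat{W}_{[\lambda]}$, and then read off the equivalence as a double coset relation. First I would dispatch the ``otherwise'' clause by contrapositive: if $\Xi_{\chi,\chi'}$ is nonempty, then a dominant pair exists. Take any $(\mu', \lambda') \in \tilde{\Xi}$ with $\mu' \in \chi$, $\lambda' \in \chi'$, and choose $u \in W_{[\lambda']}$ so that $u \cdot \lambda' =: \lambda$ is the unique dominant weight of the $W_{[\lambda']}$-orbit of $\lambda'$; applying $u$ diagonally keeps the pair in $\tilde{\Xi}$ since $W$ preserves $\Lambda$. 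Then pick $u' \in W_{[u \cdot \mu']}$ so that $u'u \cdot \mu' =: \mu$ is dominant in its $W_{[u \cdot \mu']}$-orbit; this moves the first coordinate by an element of $\Lambda_r \subset \Lambda$. The resulting $(\mu, \lambda)$ is a dominant pair with $\mu \in \chi$, $\lambda \in \chi'$, $\mu - \lambda \in \Lambda$.

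Next, assuming the dominant pair $(\mu, \lambda)$ from the hypothesis, I would normalize an arbitrary representative $(\mu', \lambda')$ of a class in $\Xi_{\chi,\chi'}$: pick $v \in W$ with $v \cdot \lambda' = \lambda$ (possible since $\chi' = W \cdot \lambda$), giving the equivalent representative $(v \cdot \mu', \lambda)$, and then write $v \cdot \mu' = w \cdot \mu$ for some $w \in W$ (possible since $v \cdot \mu' \in \chi = W \cdot \mu$). The identity $w \cdot \mu - \lambda = v(\mu' - \lambda') \in v(\Lambda) = \Lambda$ combined with $\mu - \lambda \in \Lambda$ forces $w \cdot \mu - \mu \in \Lambda$, i.e., $w \in \widehat{W}_{[\mu]} = \widehat{W}_{[\lambda]}$, where the last equality uses $\mu - \lambda \in \Lambda$. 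So every class admits a normalized representative of the claimed form.

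Finally I would check that two normalized representatives $(w_1 \cdot \mu, \lambda)$ and $(w_2 \cdot \mu, \lambda)$ represent the same class iff $w_1 \in W_\lambda w_2 W_\mu$: a witnessing $v \in W$ must satisfy $v \cdot \lambda = \lambda$ (hence $v \in W_\lambda$) and $v w_1 \cdot \mu = w_2 \cdot \mu$ (hence $w_2^{-1} v w_1 \in W_\mu$), and conversely any such $v$ works. This yields $\Xi_{\chi,\chi'} \cong W_\lambda \backslash \widehat{W}_{[\lambda]} / W_\mu$, which is in bijection with $W_\mu \backslash \widehat{W}_{[\lambda]} / W_\lambda$ via the inversion $w \mapsto w^{-1}$, matching the lemma. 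The main subtlety is the normalization step---specifically the argument that $w$ actually lies in $\widehat{W}_{[\lambda]}$---and this is precisely where the hypothesis $\mu - \lambda \in \Lambda$ is essential; the rest reduces to standard double coset bookkeeping.
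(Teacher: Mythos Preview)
Your proof is correct and follows essentially the same route as the paper's: normalize the second coordinate to $\lambda$, identify the first coordinate as an element of $W\cdot\mu\cap(\lambda+\Lambda)=\widehat{W}_{[\lambda]}\cdot\mu$, recognize the residual equivalence as the $W_\lambda$-action, and finish with the inversion bijection $W_\lambda\backslash\widehat{W}_{[\lambda]}/W_\mu\cong W_\mu\backslash\widehat{W}_{[\lambda]}/W_\lambda$. You are in fact more careful than the paper in explicitly dispatching the ``otherwise'' clause, which the paper leaves implicit.
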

\begin{proof} The number of elements in $\Xi_{\chi, \chi'}$ is equal to the number of $W_\lambda$-orbits in the set $\{w\cdot \mu|w\cdot \mu \in W\cdot \mu \cap \lambda+\Lambda\}$. The latter set is just $\widehat{W}_{[\lambda]} \cdot \mu$. So the number of elements in $\Xi_{\chi, \chi'}$ is equal to the number of double cosets $W_\lambda\backslash \widehat{W}_{[\lambda]}/W_\mu$, which is the same as the number of double cosets $W_\mu\backslash \widehat{W}_{[\lambda]}/W_\lambda$.
\end{proof}

Let $\lambda$ be a dominant weight in $\chi'$. For any $M$ in $\HC(U)$, the tensor product $M\otimes_{U} \Delta(\lambda)$ is contained in the category $O$. We define the functor: 
\[ T_\lambda: \HC(U) \rightarrow O\]
by $T_\lambda (M):= M \otimes_U \Delta(\lambda)$ for any $M$ in $\HC(U)$. Restricting to the full subcategory $\HC^1_{\chi'}$, we have the functor $T_\lambda: \HC^1_{\chi'}\rightarrow O$.

Denote $\SP_\lambda$ be the class of projective objects of the category $O$ consisting of direct sums of projective modules $P(\psi)$, where $\psi \in \lambda+\Lambda$ such that $\psi$ is minimal in the orbit $W_\lambda \cdot \psi$ and $P(\psi)$ is the projective cover of the simple object $L(\psi)$. An object $M$ in $O$ is called \emph{$\SP_\lambda$-representable} if there is an exact sequence $P_2\rightarrow P_1\rightarrow M\rightarrow 0$ with $P_1, P_2\in \SP_\lambda$.  Let $O_{\SP_\lambda}$ denote the full subcategory of the category $O$ consisting of all $\SP_\lambda$-representable objects. Let $O_{\lambda+\Lambda}$ be the full subcategory of the category $O$ consisting of all objects whose weights are contained in the lattice $\lambda+\Lambda$. We see that the category $O_{\SP_\lambda}$ is a full subcategory of the category $O_{\lambda+\Lambda}$. Moreover, if $\lambda$ is regular then $O_{\SP_\lambda}=O_{\lambda+\Lambda}$.
The following theorem is Theorem $5.9$ in \cite{BG80}:
\begin{Thm} \label{Thm BGG equivalence}For any dominant weight $\lambda \in \chi'$,  the functor $T_\lambda: \HC^1_{\chi'} \rightarrow 
O$ is a fully faithful.

a) If $\lambda$ is regular, then $T_\lambda$ is exact and induces an equivalence of categories $\HC^1_{\chi'} \cong O_{\lambda+\Lambda}$.

b) In general, $T_\lambda$ induces an equivalence of categories $\HC^1_{\chi'} \cong O_{\SP_\lambda}$. But $T_\lambda$ may not be exact since $O_{\SP_\lambda}$ is not closed under taking subquotients. 

\end{Thm}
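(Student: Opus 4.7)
The plan is to follow the classical Bernstein--Gelfand argument, reducing everything to a Hom comparison on projective generators of $\HC^1_{\chi'}$. The first step is to analyze $T_\lambda$ on the generators $V \otimes U(\g)$ supplied by Proposition \ref{prop property of HCbim}(b): directly from the definitions one has $T_\lambda(V \otimes U(\g)) \cong V \otimes_\C \Delta(\lambda)$, which carries a standard filtration with subquotients $\Delta(\lambda + \nu)$ indexed by the weights $\nu$ of $V$ with multiplicity. Restricting to the block $\HC^1_{\chi'}$ amounts to projecting this onto the central character $\chi_\lambda$. Combined with Proposition \ref{prop indecom HCbim}(b), this identifies $T_\lambda(P_\xi) \cong P(\mu)$ for every indecomposable projective $P_\xi$ in $\HC^1_{\chi'}$ and every proper pairing $(\mu,\lambda)$ representing $\xi$.

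The core step is to show that for every finite-dimensional $V$ and every $M \in \HC^1_{\chi'}$ the canonical map
\[
\Hom_{\HC(U)}(V \otimes U(\g),\, M) \longrightarrow \Hom_O\bigl(V \otimes \Delta(\lambda),\, M \otimes_U \Delta(\lambda)\bigr)
\]
is a bijection. By Proposition \ref{prop property of HCbim}(a) the left side equals $\Hom_\g(V, M^{\ad})$. For the right side I would use the standard filtration on $V \otimes \Delta(\lambda)$ together with the dominance of $\lambda$ (which kills the obstructing Ext-contributions between distinct Verma layers) to reduce to computing the $\n$-invariant weight-$\lambda$ vectors in $M \otimes_U \Delta(\lambda)$; under the map $m \mapsto m \otimes v_\lambda^+$ this space is naturally identified with $M^{\ad}$ as a $\g$-module. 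Since $\HC^1_{\chi'}$ has enough projectives of the form $V \otimes U(\g)$, the bijection extends from projectives to all of $\HC^1_{\chi'}$ by a five-lemma argument applied to a two-step projective presentation, yielding full faithfulness.

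For the essential image, when $\lambda$ is regular dominant every $\psi \in \lambda + \Lambda$ has $W_\psi = \{1\}$, hence $\psi$ is trivially minimal in $W_\psi \cdot \psi$ and each $P(\psi)$ lies in $\SP_\lambda$, giving $O_{\SP_\lambda} = O_{\lambda + \Lambda}$; since $T_\lambda$ is then an equivalence between abelian categories it is automatically exact. In the general case the essential image is generated by the $T_\lambda(P_\xi) \cong P(\mu)$ under cokernels of morphisms of projectives, which is exactly the definition of $O_{\SP_\lambda}$; $T_\lambda$ need not be exact here because $O_{\SP_\lambda}$ is not closed under subobjects in $O$. The main obstacle is the Hom comparison in the middle paragraph: matching $\Hom_\g(V, M^{\ad})$ with $\Hom_O(V \otimes \Delta(\lambda), M \otimes_U \Delta(\lambda))$ requires the dominance of $\lambda$ to trivialize the contributions from the non-top layers of the Verma filtration on $V \otimes \Delta(\lambda)$, and it is precisely this input that propagates through the rest of the proof.
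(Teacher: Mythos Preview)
The paper does not prove this theorem; it is quoted as Theorem~5.9 of \cite{BG80} with no argument supplied. So there is nothing in the paper to compare against beyond the citation. What can be assessed is whether your sketch reconstructs the Bernstein--Gelfand proof. The overall strategy---reduce full faithfulness to a Hom comparison on the projective generators $V\otimes U(\g)/\m_{\chi'}$, then read off the essential image from Proposition~\ref{prop indecom HCbim}(b)---is indeed theirs.

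The second paragraph, however, does not go through as written. The claim that one can ``reduce to computing the $\n$-invariant weight-$\lambda$ vectors in $M\otimes_U\Delta(\lambda)$'' and that ``under $m\mapsto m\otimes v_\lambda^+$ this space is naturally identified with $M^{\ad}$ as a $\g$-module'' is garbled. The adjunction yields $\Hom_O(V\otimes\Delta(\lambda),N)\cong\Hom_O(\Delta(\lambda),V^*\otimes N)$, which for dominant $\lambda$ is the $\lambda$-weight $\n$-invariants of $V^*\otimes N$, not of $N$; and that is a vector space, not a $\g$-module, so it cannot be ``identified with $M^{\ad}$''. What Bernstein--Gelfand actually do is build the right adjoint $\mathcal L(\Delta(\lambda),-)$ (the ad-finite part of $\Hom_\C(\Delta(\lambda),-)$) and show the unit $M\to\mathcal L(\Delta(\lambda),T_\lambda M)$ is an isomorphism. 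Your alternative route via the Verma filtration on $V\otimes\Delta(\lambda)$ does not obviously work either: dominance of $\lambda$ does not make $\lambda+\nu$ dominant for the other weights $\nu$ of $V$, so those layers are not projective and the Ext terms do not vanish for the reason you give.

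There is also a slip in the last paragraph: when $\lambda$ is regular the group that trivializes is $W_\lambda$, not $W_\psi$. It is the $W_\lambda$-orbit of $\psi$ that appears in the definition of $\SP_\lambda$, and regularity of $\lambda$ makes every such orbit a singleton.
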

\begin{Cor} \label{cor BGG equivalence}
Suppose $\chi$ and $\chi'$ are compatible with a pair of dominant weights $(\mu, \lambda)$ such that $\mu\in \chi, \lambda\in \chi'$ and $\mu-\lambda \in \Lambda$. Assume $\chi'$ is regular.  Then $T_\lambda$ induces an equivalence of categories $_\chi \HC^1_{\chi'} \cong \bigoplus_{\mu'}O_{\mu'}$, where $\mu'$ runs over all dominant weights in the orbit $\widehat{W}_{[\lambda]}\cdot\mu=W\cdot \mu \cap \lambda+\Lambda$.  \end{Cor}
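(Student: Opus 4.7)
The plan is to restrict the equivalence $T_\lambda:\HC^1_{\chi'}\xrightarrow{\cong} O_{\lambda+\Lambda}$ from Theorem~\ref{Thm BGG equivalence}(a) --- applicable because $\chi'$ regular forces $\lambda$ regular --- to the full subcategory ${}_\chi\HC^1_{\chi'}\subset \HC^1_{\chi'}$ and to identify the image as the claimed direct sum. Fully faithfulness is automatic from the ambient equivalence, so the real work is to match the ``left central character $\chi$'' condition on the bimodule side with a block decomposition on the $O$-side.

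First I would verify the set-theoretic identity $\widehat{W}_{[\lambda]}\cdot \mu = W\cdot \mu\cap(\lambda+\Lambda)$ asserted in the statement: if $w\cdot \mu\in \lambda+\Lambda$, then $w\mu-\mu=(w\cdot \mu-\mu)-(w\rho-\rho)\in \Lambda+\Lambda_r=\Lambda$, using $\mu-\lambda\in \Lambda$, so $w\in \widehat{W}_{[\lambda]}$; the reverse inclusion is immediate. Using the block decomposition $O_\chi=\bigoplus_{\mu'}O_{\mu'}$ over dominant $\mu'\in \chi$ recalled in Section~\ref{ssec: Cat O}, the candidate image category becomes $O_\chi\cap O_{\lambda+\Lambda}=\bigoplus_{\mu'}O_{\mu'}$, where $\mu'$ runs over the dominant weights of $\widehat{W}_{[\lambda]}\cdot \mu$.

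Next I would check that $T_\lambda$ restricts to a functor into this subcategory and is essentially surjective. The left $Z$-action on $T_\lambda(M)=M\otimes_U\Delta(\lambda)$ is induced from the left $Z$-action on $M$, so $\m_\chi^k M=0$ forces $\m_\chi^k T_\lambda(M)=0$, placing $T_\lambda(M)$ in $O_\chi$; combined with $T_\lambda(M)\in O_{\lambda+\Lambda}$ from Theorem~\ref{Thm BGG equivalence}(a) this gives the containment. Conversely, given $N\in O_\chi\cap O_{\lambda+\Lambda}$, Theorem~\ref{Thm BGG equivalence}(a) supplies $M\in \HC^1_{\chi'}$ with $T_\lambda(M)\cong N$; since $N$ has finite length and $Z$ acts locally finitely through $\chi$, we have $\m_\chi^k N=0$ for some $k$. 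Since every $z\in Z$ is central in $U$, left multiplication by $z$ is a well-defined endomorphism of $M$ in $\HC(U)$, and $T_\lambda$ sends it to left multiplication by $z$ on $N$; fully faithfulness of $T_\lambda$ then transports $\m_\chi^k N=0$ back to $\m_\chi^k M=0$, so $M\in {}_\chi\HC^1_{\chi'}$.

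The only subtle step --- and thus the main obstacle --- is this fully-faithful transport of annihilation: one must view ``multiplication by $z$'' as a morphism in the relevant category rather than as a scalar action, so that Theorem~\ref{Thm BGG equivalence} can be applied to it. Everything else is a routine bookkeeping check of compatibility between the two natural $Z$-actions and of the indexing set.
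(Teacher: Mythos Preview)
Your proposal is correct and matches the paper's intent: the paper states this corollary without proof, treating it as immediate from Theorem~\ref{Thm BGG equivalence}(a), and your argument is precisely the natural way to fill in the details. The only point worth noting is that your ``fully-faithful transport of annihilation'' step can be streamlined slightly---faithfulness alone suffices, since for each $z\in\m_\chi^k$ the bimodule endomorphism $\ell_z\colon M\to M$ satisfies $T_\lambda(\ell_z)=0$, hence $\ell_z=0$---but this is a cosmetic observation, not a gap.
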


\section{Deformed Category $\hat{O}$ and Complete Harish-Chandra bimodules} \label{sec: Deform O and Complete HC}
\subsection{Deformed category $\hat{O}$} \label{ssec: Deformed Cat O}\

Let us recall $R=\C[\fh^*]^{\wedge_0}$, the completion of $\C[\fh^*]$ at $0$. Set $U_R(\g)=U(\g) \otimes R$. Let $z$ denote the natural map $z: S(\fh) \cong \C[\fh^*]\rightarrow R$. The deformed category $\hat{O}$ is the full subcategory of the category of $U_R(\g)$-modules consisting of all objects  $M$ satisfying:
\begin{enumerate}[label={(\bfseries T\arabic*)}]
\item \label{T1}Under the action of the Cartan subalgebra $\fh$, we have that $M=\bigoplus_{\lambda\in \fh^*} M_{\lambda}$, where $M_{\lambda}:=\{m \in M| hm=(\lambda(h)+z(h))m \;\text{for all $h\in \fh$}\}$
\item \label{T2} The action of $\n$ on $M$ is locally nilpotent.
\item \label{T3} $M$ is finitely generated over $U_R(\g)$.
\end{enumerate}

The category $\hat{O}$ shares many properties with the category $O$. Any $U_R(\g)$-submodule and $U_R(\g)$-quotient of any object in $\hat{O}$ is again contained in $\hat{O}$. For any $M$ in $\hat{O}$, the weight space $M_\lambda$ is finitely generated over $R$ for all $\lambda$ in $\fh^*$. The following properties of $\hat{O}$ can be founded in \cite{So92} or obtained by similar treatments for the category $O$ in Section $\ref{ssec: Cat O}$.

The deformed Verma module $\hat{\Delta}(\lambda)=U(\g) \otimes_{U(\fb)} (\C_\lambda \otimes R)$ defined as follows is contained in $\hat{O}$: The $U(\fh)$-action on the tensor product $\C_\lambda\otimes R$ makes $\C_\lambda\otimes R$ into a $(U(\fh), R)$-bimodule. Under the natural map $U(\fb)\rightarrow U(\fh)$, we can view $\C_\lambda\otimes R$ as a $(U(\fb), R)$-bimodule. Therefore, the induction $\hat{\Delta}(\lambda)$ is a $(U(\g), R)$-bimodule, equivalently, a $U_R(\g)$-module. It is then obvious that $\hat{\Delta}(\lambda)$ belongs to $\hat{O}$.

The center of $U_R(\g)$ is $Z\otimes R$. The closed points in Spec $(Z\otimes R)$ are of the form $(\chi, \m_0)$ in which $\chi$ is a point in $\fh^*/(W, \cdot)$ and $\m_0$ is the unique maximal ideal in $R$.  Any object in $\hat{O}$ has a finite filtration whose subquotients are quotients of deformed Verma modules. The support of the deformed Verma module $\hat{\Delta}(\lambda)$ contains the unique closed point $(\chi_\lambda, \m_0)$. For any dominant weight $\lambda$, let $\hat{O}_{\lambda}$ be the full subcategory of $\hat{O}$ consisting of objects whose supports in Spec $(Z\otimes R)$ contain the unique closed point $(\chi_\lambda, \m_0)$ and whose weights lie in the coset $\lambda +\Lambda_r$ in $\fh^*$. Then we have the decomposition of categories: $\hat{O}=\bigoplus_\lambda \hat{O}_\lambda$ in which $\lambda$ runs over all dominant weights in $\fh^*$. 
Let $\pr_\lambda: \hat{O}\rightarrow \hat{O}_\lambda$ be the natural projection. 

Let $\hat{O}_{\chi}=\bigoplus  \hat{O}_{\lambda}$ where $\lambda$ runs over all dominant weights in the orbits $\chi$. Then $\hat{O}_{\chi}$ is the full subcategory of $\hat{O}$ consisting of all objects whose  supports in Spec $(Z\otimes R)$ contain the unique closed point $(\chi, \m_0)$. There is a decomposition $\hat{O}=\bigoplus \hat{O}_{\chi}$. If $\lambda$ is integral and dominant then $\hat{O}_{\chi_\lambda}=\hat{O}_\lambda$. Let $\pr_{\chi}: \hat{O}\rightarrow \hat{O}_{\chi}$ be the natural projection. 


Let $\lambda$ be a dominant weight. Although the category $\hat{O}_\lambda$ is not Artinian, it does have finitely many simples $L(w\cdot \lambda)$ for all $w$ in $W_{[\lambda]}$, and it has enough projectives: $\hat{\Delta}(\lambda)$ is projective, and every simple is covered by some projective object of the form  $\pr_\lambda (V\otimes \hat{\Delta}(\lambda))$ for some finite dimensional $\g$-representation $V$. We can view the category $\hat{O}_\lambda$ as a deformation of the category $O_\lambda$.

For any two projective objects $\hat{P}_1, \hat{P}_2$ in $\hat{O}$, the space $\Hom_{U_R(\g)}(\hat{P}_1, \hat{P}_2)$ is a free $R$-module of finite rank,
 and for any $R$-algebra $R'$,  the canonical map:
 \Eq{\label{Hom of proj}\Hom_{U_R(\g)}(\hat{P}_1, \hat{P}_2) \otimes_R R' \rightarrow \Hom_{U_{R'}(\g)}(\hat{P}_1\otimes_R R', \hat{P}_2\otimes_R R')}
 is an isomorphism; see \cite[Theorem $5$]{So92}.

\begin{Lem} \label{prop finite homological O}
    Any object in $\hat{O}_\lambda$ has a finite resolution by projective objects.
\end{Lem}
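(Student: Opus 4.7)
The plan is to realize $\hat O_\lambda$ as the category of finitely generated modules over an $R$-algebra $\hat A$ that is free of finite rank over $R$, and then to deduce finite global dimension from the classical statement for $O_\lambda$ by a flat-deformation argument.

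First I would construct a projective generator of $\hat O_\lambda$. By the discussion preceding the lemma, the category has finitely many simples $L(w\cdot\lambda)$ with $w\in W_{[\lambda]}$, and each is covered by a projective object of the form $\pr_\lambda(V\otimes\hat\Delta(\lambda))$ for some finite dimensional $\g$-representation $V$. Let $\hat P$ be the direct sum of one such projective cover of each simple and set $\hat A:=\End_{\hat O_\lambda}(\hat P)^{\mathrm{op}}$. Applying \eqref{Hom of proj} with $R'=R$ shows that $\hat A$ is free of finite rank over $R$, while applying it with $R'=\C=R/\m_0$ yields $\hat A\otimes_R\C\cong \End_{O_\lambda}(P)^{\mathrm{op}}=:A$, where $P$ is the projective generator of $O_\lambda$ obtained by reducing $\hat P$ modulo $\m_0$. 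A standard Morita argument then gives an equivalence $\Hom_{\hat O_\lambda}(\hat P,-)\colon \hat O_\lambda \xrightarrow{\sim} \hat A\text{-mod}^{\mathrm{fg}}$ under which projective objects on the two sides correspond.

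Next I would bound the global dimension of $\hat A$. The category $O_\lambda$ is classically known to have finite global dimension, so $\mathrm{gl.dim}(A)<\infty$. Since $R$ is a regular local ring of dimension $\dim\fh$ and $\hat A$ is a flat $R$-algebra whose special fibre is $A$, the standard inequality $\mathrm{gl.dim}(\hat A)\le \mathrm{gl.dim}(R)+\mathrm{gl.dim}(A)<\infty$ applies. Concretely, given a finitely generated $\hat A$-module $M$, one lifts a minimal projective resolution of $M/\m_0 M$ over $A$ to a resolution of $M$ by projective covers over $\hat A$, and uses Nakayama together with the finite projective dimension of $M$ over $R$ to force the resolution to terminate. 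Transporting this back through the Morita equivalence yields a finite projective resolution of any object in $\hat O_\lambda$.

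The main obstacle I expect is the Morita bookkeeping: showing that $\Hom_{\hat O_\lambda}(\hat P,-)$ is genuinely an equivalence onto $\hat A\text{-mod}^{\mathrm{fg}}$, and that reduction modulo $\m_0$ really identifies a projective generator of $\hat O_\lambda$ with one of $O_\lambda$. Both rely on the structural description of projectives in $\hat O$ via translation from $\hat\Delta(\lambda)$ together with the base-change isomorphism \eqref{Hom of proj}; the deformation-theoretic bound on global dimension is then routine commutative algebra.
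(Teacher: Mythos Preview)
Your approach is correct and genuinely different from the paper's. The paper does not set up a Morita equivalence or run a deformation argument at all: it simply observes that $\hat O_\lambda$ is a highest weight category over the base ring $R$, and since $R$ has finite global dimension it invokes a general result of Rouquier (\cite[Proposition~4.23]{R07}) to conclude. So the paper's proof is a one-line citation relying on the standard/costandard machinery, whereas you bypass the highest weight structure entirely and instead exploit that $\hat O_\lambda$ is a flat $R$-deformation of $O_\lambda$. Your route is more elementary and gives an explicit bound $\mathrm{gl.dim}(\hat A)\le \dim\fh+\mathrm{gl.dim}(A)$; the paper's route is shorter given the reference but needs the highest weight formalism over a base ring.

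One point to tighten: your sketch of the inequality $\mathrm{gl.dim}(\hat A)\le \mathrm{gl.dim}(R)+\mathrm{gl.dim}(A)$ via ``lifting a minimal resolution and using Nakayama'' is imprecise, because syzygies over $\hat A$ do not reduce mod $\m_0$ to syzygies over $A$ (there is a $\mathrm{Tor}^R$ obstruction). The clean argument is: every simple $\hat A$-module $S$ is an $A$-module, and the Koszul complex on a regular system of parameters for $\m_0$ (tensored with $\hat A$, which is $R$-free) shows $\mathrm{pd}_{\hat A}(A)=\dim\fh$; then the change-of-rings inequality $\mathrm{pd}_{\hat A}(S)\le \mathrm{pd}_{\hat A}(A)+\mathrm{pd}_A(S)$ gives the bound. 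The Morita step you flag is indeed the main bookkeeping, but it goes through since $\hat A$ is semiperfect (finite over a complete local ring) and $\hat P$ is a projective generator by the Nakayama argument you indicate.
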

\begin{proof} The category $\hat{O}_\lambda$ is a highest weight category over the finite cohomological dimensional ring $R$. Hence, the lemma follows from general results about highest weight categories, see \cite[Proposition $4.23$]{R07}.
\end{proof}

The following objects are variants of objects in Definition \ref{def translation functor}:
\begin{Def}
a) For any finite dimensional $\g$-representation $V$, the \emph{projective functor} $\CP_{\chi'\rightarrow \chi}^V:\hat{O}_{\chi'}\rightarrow \hat{O}_{\chi}$ is defined by $\pr_{\chi}(V\otimes M)$ for any $M$ in $\hat{O}_{\chi'}$.

b) Suppose there is a pair of dominant weights $(\mu,\lambda)$ such that $\mu\in \chi, \lambda\in \chi'$ and $ \mu-\lambda \in \Lambda$. The \emph{translation functor} $\CT_{\lambda \rightarrow \mu}: \hat{O}_{\chi'}\rightarrow \hat{O}_{\chi}$ is defined by $\CP_{\chi'\rightarrow \chi}^{L(\mu-\lambda)}$.
 
\end{Def}

The projective functor $\CP^V_{\chi'\rightarrow \chi}$ sends projective objects in $\hat{O}_{\chi'}$ to projective objects in $\hat{O}_{\chi}$. We also need the following result later. The proof is similar to the one of Lemma \ref{lem image of Verma}.
\begin{Lem} Suppose there is a pair of dominant weights $(\mu, \lambda)$ such that $\mu\in \chi, \lambda\in \chi'$ and $\mu-\lambda \in \Lambda$. Assume that $W_\lambda\subset W_\mu$. Then $\CT_{\lambda\rightarrow \mu}(\hat{\Delta}(w\cdot \lambda))=\hat{\Delta}(w\cdot \mu)$ for any $w\in W_{[\lambda]}=W_{[\mu]}.$
\end{Lem}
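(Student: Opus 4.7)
My plan is to adapt the proof of Lemma \ref{lem image of Verma} to the deformed setting. The essential ingredient is the following deformed filtration: for any finite dimensional $\g$-module $V$ and any $\psi \in \fh^*$, the $U_R(\g)$-module $V \otimes \hat{\Delta}(\psi)$ admits a filtration by $U_R(\g)$-submodules whose successive quotients are $\hat{\Delta}(\psi+\nu)$, with $\nu$ ranging over the weights of $V$ counted with multiplicity. To prove this I would pick a total ordering on weights refining the dominance order (highest first), giving a $\fb$-stable filtration $0 = F^0V \subset \cdots \subset F^NV = V$ whose subquotients are one-dimensional of weight $\nu_i$. Using the tensor identity
\[
V \otimes \hat{\Delta}(\psi) \;=\; V \otimes U(\g) \otimes_{U(\fb)} (\C_\psi \otimes R) \;\cong\; U(\g) \otimes_{U(\fb)} (V|_\fb \otimes \C_\psi \otimes R),
\]
together with flatness of $U(\g)$ over $U(\fb)$ (from PBW), the induced filtration on the right hand side has subquotients $U(\g) \otimes_{U(\fb)} (\C_{\nu_i+\psi} \otimes R) = \hat{\Delta}(\psi+\nu_i)$, as desired.

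Applied with $V = L(\mu-\lambda)$ and $\psi = w\cdot\lambda$, followed by the exact functor $\pr_\chi$, this yields a filtration of $\CT_{\lambda\to\mu}(\hat{\Delta}(w\cdot\lambda))$ whose successive quotients are $\pr_\chi \hat{\Delta}(w\cdot\lambda + \nu)$. Since $\hat{\Delta}(\psi')$ lies in the block $\hat{O}_{\chi_{\psi'}}$, such a subquotient is $\hat{\Delta}(w\cdot\lambda+\nu)$ when $w\cdot\lambda+\nu \in W\cdot\mu$ and zero otherwise. The two observations used in the proof of Lemma \ref{lem image of Verma} --- namely that, under the hypothesis $W_\lambda \subset W_\mu$, \cite[Lemma $7.5$]{H08} forces $\nu = w(\mu-\lambda)$, and that $\dim_\C L(\mu-\lambda)_{w(\mu-\lambda)} = 1$ --- then imply that exactly one subquotient survives. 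The direct computation $w\cdot\lambda + w(\mu-\lambda) = w\cdot\mu$ identifies this surviving subquotient as $\hat{\Delta}(w\cdot\mu)$, and the filtration collapsing to length one gives the desired isomorphism.

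The only non-routine point is verifying that $\hat{\Delta}(\psi')$ does belong to the block $\hat{O}_{\chi_{\psi'}}$, i.e., that the support of $\hat{\Delta}(\psi')$ in $\Spec(Z\otimes R)$ meets the set of closed points in exactly the single point $(\chi_{\psi'}, \m_0)$. This follows by computing the central character $Z \otimes R \to R$ determined by the action on the highest weight generator of $\hat{\Delta}(\psi')$: its reduction modulo $\m_0$ recovers the Harish-Chandra character $\chi_{\psi'}$, and since $\Spec R$ is irreducible, the image of the associated closed subvariety of $\Spec(Z\otimes R)$ contains no other closed point. I expect this verification, together with the careful bookkeeping of block membership when taking $\pr_\chi$ of each filtration step, to be the only slightly delicate piece of the argument; the filtration itself and the combinatorial identification of the surviving term are direct transcriptions of the undeformed case.
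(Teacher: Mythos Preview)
Your proposal is correct and follows exactly the approach the paper intends: the paper's proof consists of the single sentence ``The proof is similar to the one of Lemma \ref{lem image of Verma},'' and you have carefully spelled out precisely what that similarity entails in the deformed setting. The one point you flag as ``slightly delicate''---that $\hat{\Delta}(\psi')$ lies in $\hat{O}_{\chi_{\psi'}}$---is in fact already recorded in Section~\ref{ssec: Deformed Cat O} (``The support of the deformed Verma module $\hat{\Delta}(\lambda)$ contains the unique closed point $(\chi_\lambda,\m_0)$''), so you may simply cite that rather than reprove it.
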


We now will focus on the relation between the $Z$-action and the $R$-action on any object of $\hat{O}_\lambda$. For  each $\lambda$ in $\fh^*$, we define the map $t_\lambda: \C[\fh^*]\rightarrow \C[\fh^*]$  by $(t_\lambda f)(\nu)=f(\nu+\lambda)$ for any $f\in \C[\fh^*], \nu\in \fh^*$. This map extends to an isomorphism of algebras $\C[\fh^*]^{\wedge_\lambda,(W_\lambda, \cdot)}\cong \C[\fh^*]^{\wedge_0, W_\lambda}$, here $W_\lambda$ is the stabilizer of $\lambda$ in $W$ under the dot action. The Harish-Chandra isomorphism $Z\cong \C[\fh^*]^{(W,\cdot)}$ gives an isomorphism $Z^{\wedge_{\chi'}}\cong \C[\fh^*]^{\wedge_\lambda, (W_\lambda, \cdot)}$. Hence, we can consider the following composition:
\Eq{\label{identify complete Z}\epsilon_\lambda: Z^{\wedge_{\chi_\lambda}}\xrightarrow[]{\cong} \C[\fh^*]^{\wedge_\lambda, (W_\lambda, \cdot)}\xrightarrow[]{\cong} \C[\fh^*]^{\wedge_0, W_\lambda}=R^{W_\lambda}.}

Now, we fix a dominant weight $\lambda$ and consider the category $\hat{O}_\lambda$. For each $M$ in $\hat{O}_\lambda$, the deformed weight space $M_\mu$ of $M$ is a finitely generated $R$-module, hence complete and seperated in the $\m_0$-adic topology. The action of $Z$ preserves all deformed weight spaces $M_\mu$.
\begin{Lem}a)  The action of $Z$ on the deformed weight space $M_\mu$ uniquely lifts to an action of $Z^{\wedge_{\chi_\lambda}}$. As a result, the action of $Z$ on $M$ can be uniquely extended to an action of $Z^{\wedge_{\chi_\lambda}}$ on $M$ by endomorphisms that are continuous in the $\m_0$-adic topology.

b) For any $w\in W_{[\lambda]}$, the action of $Z^{\wedge_{\chi_\lambda}}$ on $\hat{\Delta}(w\cdot \lambda)$ satisfies $zm =\epsilon_{w\cdot \lambda}(z)m$ for any $z\in Z^{\wedge_{\chi_\lambda}}$ and $ m \in \hat{\Delta}(w\cdot \lambda)$.

\end{Lem}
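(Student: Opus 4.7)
The plan for part (a) is to reduce the existence of the $Z^{\wedge_{\chi_\lambda}}$-action to a continuity statement on each deformed weight space. Since $M_\mu$ is finitely generated over the complete local ring $R$, it is Hausdorff and complete in the $\m_0$-adic topology; hence any $Z$-action on $M_\mu$ that is continuous with respect to the $\m_{\chi_\lambda}$-adic topology on $Z$ and the $\m_0$-adic topology on $M_\mu$ extends uniquely, by completeness, to a continuous $Z^{\wedge_{\chi_\lambda}}$-action, and assembling over $\mu$ yields the action on $M$. What must be shown is therefore: for every $j\geq 0$ there exists $k\geq 0$ with $\m_{\chi_\lambda}^k M_\mu \subseteq \m_0^j M_\mu$.

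To establish this, I would invoke the finite filtration, recalled above, of any object of $\hat{O}$ by quotients of deformed Verma modules. For $M\in \hat{O}_\lambda$, the weight and support conditions force the highest weights $\nu_i$ of the successive Verma covers to satisfy $\nu_i \in \lambda+\Lambda_r$ and $\chi_{\nu_i}=\chi_\lambda$, so the isomorphism $\epsilon_{\nu_i}: Z^{\wedge_{\chi_\lambda}} \xrightarrow[]{\cong} R^{W_{\nu_i}}$ of \eqref{identify complete Z} sends $\m_{\chi_\lambda}$ into $\m_0\cap R^{W_{\nu_i}}$. Anticipating part (b), the center $Z$ acts on $\hat{\Delta}(\nu_i)$, and hence on any of its quotients, by scalar multiplication through $\epsilon_{\nu_i}$, so $\m_{\chi_\lambda}^k$ acts as multiplication by elements of $\m_0^k$. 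This gives $\m_{\chi_\lambda}^k Q_\mu \subseteq \m_0^k Q_\mu$ for every subquotient $Q$ of the filtration. A short induction on the filtration length, using that $\m_{\chi_\lambda}$ and $\m_0$ commute within $Z\otimes R$, then propagates the inclusion to $M_\mu$ itself at the cost of enlarging $k$.

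Part (b) I would prove as a direct calculation on the highest weight vector. The deformed Verma $\hat{\Delta}(w\cdot \lambda)$ is generated over $U_R(\g)$ by its canonical highest weight vector $v$, and $h\in \fh$ acts on $v$ as $(w\cdot\lambda)(h)+z(h)$. By the Harish-Chandra isomorphism $Z\cong \C[\fh^*]^{(W,\cdot)}$, any $z_0 \in Z$ with image $p$ acts on $v$ as the scalar obtained by expanding $p$ as a formal power series around $w\cdot \lambda$; by the very definition of $\epsilon_{w\cdot \lambda}$ in \eqref{identify complete Z}, this scalar is $\epsilon_{w\cdot \lambda}(z_0)\in R^{W_{w\cdot\lambda}}\subseteq R$. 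Since $Z$ commutes with $U(\n_-)$ and $\hat{\Delta}(w\cdot \lambda)=U(\n_-)\cdot v$, the same scalar describes the action on all of $\hat{\Delta}(w\cdot \lambda)$. Extending by the continuity established in (a) gives the formula for every $z\in Z^{\wedge_{\chi_\lambda}}$.

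The main obstacle lies in the inductive step in (a). The scalar description on each Verma subquotient is transparent, but one must verify that successive extensions in the filtration do not destroy $\m_{\chi_\lambda}$-adic continuity of the central action. The commutativity of $\m_0$ and $\m_{\chi_\lambda}$ within $Z\otimes R$, together with the finiteness of the filtration, is precisely what allows the estimate $\m_{\chi_\lambda}^k(\cdot)\subseteq \m_0^k(\cdot)$ to be iterated across short exact sequences without losing control of $k$.
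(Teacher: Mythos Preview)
Your argument is correct, and for part (b) it is essentially the paper's argument verbatim: compute the $Z$-action on the highest weight line, recognize it as the map defining $\epsilon_{w\cdot\lambda}$, and pass to the completion by continuity.

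For part (a), however, the paper takes a shorter route. Rather than invoking the Verma filtration and the scalar action on each subquotient, the paper observes directly that $M\otimes_R R/\m_0^k$ lies in the block $O_\lambda$ of the ordinary category $O$; since every object of $O_\lambda$ is of finite length and supported at $\chi_\lambda$, it is annihilated by some power $\m_{\chi_\lambda}^N$. This immediately gives $\m_{\chi_\lambda}^N M_\mu \subseteq \m_0^k M_\mu$, i.e.\ the $\m_{\chi_\lambda}$-adic topology is finer than the $\m_0$-adic one, and the extension to $Z^{\wedge_{\chi_\lambda}}$ follows. Your approach via the Verma filtration recovers the same continuity estimate, but at the cost of the d\'evissage step (your ``at the cost of enlarging $k$''), which is exactly the place where you had to pause. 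The paper's argument sidesteps this entirely: no filtration, no induction, and no need to anticipate the formula from part (b). What your approach buys is that it makes the constant explicit---one sees $\m_{\chi_\lambda}^{nk}M_\mu\subseteq\m_0^kM_\mu$ with $n$ the filtration length---whereas the paper's argument is nonconstructive in $N$.
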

\begin{proof}
a) The action of $Z$ on $M_\mu$ equips $M_\mu$ with an $\m_{\chi_\lambda}$-adic topology. Since $M\otimes_R R/\m_0^k$ belongs to the category $O_\lambda$ for any integer $k$, the $\m_{\chi_\lambda}$-adic topology on $M_\mu$ is at least finer than the $\m_0$-adic topology on $M_\mu$. On the other hand, $M_\mu$ is complete and seperated in the $\m_0$-adic topology. Therefore, the action of $Z$ on $M_\mu$ can be uniquely extended to an action of $Z^{\wedge_{\chi_\lambda}}$ on $M_\mu$.

b)  The action map  $Z \rightarrow \End(\hat{\Delta}(w\cdot \lambda)) \cong R$ is just the following composition:
\[ Z \xrightarrow[]{\cong}\C[\fh^*]^{(W, \cdot)} \xrightarrow[]{t_{w\cdot \lambda}} \C[\fh^*] \rightarrow R.\]
Therefore, the claim follows by comparing with the construction  of $\epsilon_{w\cdot\lambda}$ in $(\ref{identify complete Z})$ with a notice  that $\chi_\lambda=\chi_{w\cdot \lambda}$.
\end{proof}
Via the identification $Z^{\wedge_{\chi_\lambda}} \cong R^{W_\lambda}$ in $(\ref{identify complete Z})$, we have left $R^{W_\lambda}$-actions on each object in $\hat{O}_\lambda$. Therefore, we have left $R^{W_{[\lambda]}}$-actions on each object in $\hat{O}_\lambda$. On the other hand, the right $R$-actions give right $R^{W_{[\lambda]}}$-actions on each object in $\hat{O}_\lambda$.
\begin{Cor} The left action  and the right action of $R^{W_{[\lambda]}}$ on each object in $\hat{O}_\lambda$ coincide.
\end{Cor}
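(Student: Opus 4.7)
The plan is to establish the coincidence first on the deformed Verma modules $\hat{\Delta}(w\cdot\lambda)$ for $w\in W_{[\lambda]}$, then to lift it to projective objects via base change to $K:=\mathrm{Frac}(R)$, and finally to propagate it to all of $\hat{O}_\lambda$ by naturality.

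For the Verma case, part b) of the preceding lemma shows that $z\in Z^{\wedge_{\chi_\lambda}}$ acts on $\hat{\Delta}(w\cdot\lambda)$ as the scalar $\epsilon_{w\cdot\lambda}(z)\in R$. Writing $(w_*g)(\nu):=g(w^{-1}\nu)$ for the linear $W$-action on $R$, a direct computation using the $W$-invariance of the Harish-Chandra image $\mathrm{HC}(z)$ under the dot action, together with the identity $w^{-1}\cdot(\nu+w\cdot\lambda)=w^{-1}\nu+\lambda$, yields $\epsilon_{w\cdot\lambda}=w_*\circ\epsilon_\lambda$. Hence the left action of $f\in R^{W_\lambda}$ on $\hat{\Delta}(w\cdot\lambda)$ is multiplication by $w_*f$, which for $f\in R^{W_{[\lambda]}}$ and $w\in W_{[\lambda]}$ equals $f$, matching the right action.

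For a projective $P\in\hat{O}_\lambda$, $P$ is a direct summand of some $\pr_\lambda(V\otimes\hat{\Delta}(\lambda))$, so every weight space $P_\mu$ is a finite free $R$-module. After base change to $K$, the characters $\epsilon_{w\cdot\lambda}=w_*\circ\epsilon_\lambda$ become pairwise distinct for distinct cosets $w\in W_{[\lambda]}/W_\lambda$: the image $\epsilon_\lambda(Z)$ is dense in $R^{W_\lambda}$, while only elements of $W_\lambda$ fix $R^{W_\lambda}$ pointwise (since $W_\lambda$-invariant polynomials separate $W_\lambda$-orbits in $\fh^*$). Hence any finite filtration of a $U_K(\g)$-module by the localized Vermas $\hat{\Delta}(w\cdot\lambda)\otimes_R K$ splits as their direct sum via central-character projectors; applying this to the Verma filtration of $P\otimes_R K$ and the Verma case on each summand, the difference of the two actions vanishes on $P\otimes_R K$, and the $R$-torsion-freeness of each $P_\mu$ transports the vanishing back to $P$. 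A general $M\in\hat{O}_\lambda$ admits a surjection from a projective $P$, and the naturality of both actions---the left one because $Z$ is central in $U(\g)$, the right one because $R$ is commutative---descends the equality from $P$ to $M$.

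The most delicate step is the generic-fiber splitting of $P\otimes_R K$, specifically (i) the pairwise distinctness of the characters $\epsilon_{w\cdot\lambda}$ over $K$ for different $W_\lambda$-cosets, and (ii) the splitting of the resulting Verma filtration via central-character projectors. Both are standard features of the deformation of category $O$ but merit explicit verification.
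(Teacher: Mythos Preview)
Your proof is correct and follows essentially the same strategy as the paper: reduce to objects of the form $\pr_\lambda(V\otimes\hat{\Delta}(\lambda))$ (equivalently, projectives), pass to the fraction field of $R$ where the Verma filtration splits into a direct sum by central-character/block considerations, verify the claim on each Verma summand, and pull the equality back via $R$-freeness before propagating to arbitrary objects by surjection. You supply more explicit detail than the paper on the Verma case (the identity $\epsilon_{w\cdot\lambda}=w_*\circ\epsilon_\lambda$) and on why the generic central characters separate distinct $W_\lambda$-cosets, both of which the paper leaves implicit.
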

\begin{proof} 
Since each object in $\hat{O}_\lambda$ can be covered by some object of the form $\pr_\lambda( V\otimes \hat{\Delta}(\lambda))$, it suffices to prove the statement for $M=\pr_\lambda(V\otimes \hat{\Delta}(\lambda))$. We know that $M$ has a finite filtration whose successive quotients isomorphic to $\hat{\Delta}(w\cdot \lambda)$ for some $w\in W_{[\lambda]}$, in particular, $M$ is a free $R$-module.

Let $Q$ be the fraction field of $R$. Let $O_Q$ denote the category $O$ over $U_Q(\g)$. The $U_Q(\g)$-module $M\otimes_R Q$ belongs to the category $O_Q$ and has a finite filtration whose successive quotients isomorphic to $\hat{\Delta}_Q(w\cdot \lambda):=\hat{\Delta}(w\cdot \lambda)\otimes_R Q$ for some $w\in W_{[\lambda]}$. Each Verma module $\hat{\Delta}(w\cdot \lambda)$ belongs to different blocks in the category $O_Q$. Therefore, $M\otimes_R Q$ is the direct sum of $U_Q(\g)$-modules of the form $\hat{\Delta}_Q(w\cdot \lambda)$, hence two $R^{W_{[\lambda]}}$-actions on $M\otimes_R Q$ coincide. Since $M$ is free over $R$, we have the inclusion map $M \hookrightarrow M\otimes_R Q$, therefore, two $R^{W_{[\lambda]}}$-actions on $M$ coincide.
\end{proof}

\begin{Rem} \label{rem3}We will also consider the analogs $\hat{O}^r, O^r$ consisting of right $U_R(\g)$-modules. All results in Section $\ref{ssec: Cat O}$ and $\ref{ssec: Deformed Cat O}$ can be carried to $\hat{O}^r$ and $O^r$ with suitable modifications.
\end{Rem}

\subsection{Complete Harish-Chandra bimodules} \label{ssec:  complete HCbim}\


Let $U^{\chi, \chi'}\bim$ denote  the category  of $(U^\chi, U^{\chi'})$-bimodules. 
\begin{Def}An object $M$ in $U^{\chi, \chi'}$-bim is Harish-Chandra if it satisfies the following axioms:

\begin{enumerate}[label={(\bfseries S\arabic*)}]
\itemsep-0.3em 
\item \label{S1} The adjoint $\g$-action on $M$ is locally finite.
\item  \label{S2}$M$ is finitely generated as a left $U^\chi$-module.
\end{enumerate}
\end{Def}
Let $\HC(U^{\chi, \chi'})$ denote the full subcategory of the category $U^{\chi, \chi'}$-bim consisting of all Harish-Chandra bimodules. One may wonder why we require $M$ to be finitely generated as a left $U^\chi$-module instead of being finitely generated as a bimodule as in \ref{R2}. This is because using the  axiom \ref{S2} makes some arguments easier. Nevertheless, we can replace the axiom \ref{S2} by an equivalent axiom as  in the following lemma:
\begin{Prop}\label{prop: equivalent axioms}
Modulo \ref{S1}, the axiom \ref{S2} is equivalent to the following axiom:
\begin{enumerate}[label={(\bfseries S\arabic*')}, start=2]
\item \label{S22} $M$ is finitely generated as a right $U^{\chi'}$-module.
\end{enumerate}
\end{Prop}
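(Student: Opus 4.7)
By symmetry it suffices to prove $(\ref{S1}) + (\ref{S2}) \Rightarrow (\ref{S22})$; the reverse implication follows by swapping the roles of left and right. Suppose $M$ satisfies $(\ref{S1})$ and $(\ref{S2})$. Pick finitely many left $U^\chi$-generators and, by $(\ref{S1})$, enclose them in a finite-dimensional $\ad\g$-stable subspace $V \subset M$, so that $M = U^\chi V$. The plan is to realize $M$ as a quotient, in the category $\HC(U^{\chi, \chi'})$, of a standard bimodule that is manifestly right-finitely generated, with right generators coming from $V$.

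Equip $V \otimes_\C U^{\chi'}$ with the $(U(\g), U^{\chi'})$-bimodule structure
\[
x\cdot(v\otimes u) = (\ad_x v)\otimes u + v\otimes xu,\qquad (v\otimes u)\cdot u' = v\otimes uu',
\]
for $x\in\g$ and $u, u'\in U^{\chi'}$, where $\ad_x v \in V$ uses the $\g$-module structure on $V$ inherited from the adjoint action inside $M$. Define $\phi:V\otimes U^{\chi'}\to M$ by $\phi(v\otimes u)=vu$ (using the right $U^{\chi'}$-action on $M$). Right-$U^{\chi'}$-linearity is immediate; left-$U(\g)$-linearity reduces, by induction on PBW degree, to the case $x\in\g$, where one uses the identity $(\ad_x v)u + v(xu) = (xv)u$ in $M$ (which rearranges $\ad_x v = xv - vx$ together with the commutation of the left and right actions on $M$).

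The left $Z$-action on $V\otimes U^{\chi'}$ has finite support in $\Spec Z$: the images under $\fh^* \to \fh^*/(W,\cdot)$ of $\mu + \nu$ with $\mu\in\chi'$ and $\nu$ a weight of $V$ (compare Remark~\ref{rem support of HCbim}, adapted to the right-completion at $\chi'$). Any single element of $V\otimes U^{\chi'}$ generates a finitely-generated $Z$-submodule with support contained in this finite set, so the classical primary decomposition yields $V\otimes U^{\chi'} = \bigoplus_{\chi''} (V\otimes U^{\chi'})_{(\chi'')}$. Each primary summand is $\m_{\chi''}$-locally nilpotent and thereby inherits a canonical action of $Z^{\wedge_{\chi''}}$, becoming a $(U^{\chi''}, U^{\chi'})$-bimodule. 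Since the left $Z$-action on $M$ factors through $Z^{\wedge_\chi}$ (elements of $Z$ outside $\m_\chi$ are units in $Z^{\wedge_\chi}$ and hence act invertibly on $M$), the left-$Z$-equivariant map $\phi$ vanishes on every primary summand with $\chi''\neq\chi$, and restricts to a $(U^\chi, U^{\chi'})$-bimodule map $\phi_\chi:(V\otimes U^{\chi'})_{(\chi)}\to M$. Its image is the right-$U^{\chi'}$-submodule $V\cdot U^{\chi'}\subset M$; as a sub-bimodule containing $V$ (the $\chi$-primary projection of $v\otimes 1$ still maps to $v\in M$), it contains $U^\chi V = M$. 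Hence $M = V\cdot U^{\chi'}$, establishing $(\ref{S22})$.

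The main subtlety I expect is the primary decomposition step: verifying that the left $Z$-action on $V\otimes U^{\chi'}$ genuinely splits the module as a direct sum of $\m_{\chi''}$-locally-nilpotent summands, that each summand receives a well-defined action of the completion $Z^{\wedge_{\chi''}}$, and that $\phi$ restricts compatibly to the $\chi$-summand as a $(U^\chi, U^{\chi'})$-bimodule map. All of this rests on the finiteness of the left $Z$-support of $V\otimes U^{\chi'}$, which bootstraps the classical primary decomposition from the cyclic submodules generated by individual elements to the full module.
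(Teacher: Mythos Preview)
Your overall strategy coincides with the paper's: construct a surjection from the $\chi$-summand of $V\otimes U^{\chi'}$ (which the paper calls $P^{\chi,\chi'}_V$) onto $M$, and deduce right-finite-generation from that of the summand. The paper establishes the decomposition $V\otimes U^{\chi'}=\bigoplus_{\chi''} P^{\chi'',\chi'}_V$ in a preceding lemma (via the isotypic spaces $\Hom_\g(V',\,\cdot\,)$, each finitely generated over $\CZ$), so its proof of the proposition is a clean two-line application.

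Your justification of that decomposition, however, has a genuine gap---precisely where you flagged the subtlety. The claim that each primary summand is $\m_{\chi''}$-locally nilpotent is false: take $V=\C$ trivial, so $V\otimes U^{\chi'}=U^{\chi'}$, which is $\m_{\chi'}$-adically complete but not $\m_{\chi'}$-torsion. The supporting claim about cyclic $Z$-submodules fails too: the left annihilator of $1\in U^{\chi'}$ in $Z$ is zero (the map $Z\hookrightarrow Z^{\wedge_{\chi'}}$ is injective), so $Z\cdot 1\cong Z$ has support all of $\Spec Z$, not a finite set. What is true is that each isotypic component $\Hom_\g(V', V\otimes U^{\chi'})$ is finitely generated over $Z^{\wedge_{\chi'}}$ on the right, hence complete; the commuting left $Z$-action then lands in a semilocal endomorphism ring, which yields both the direct-sum splitting and the extension to a $Z^{\wedge_{\chi''}}$-action on each summand. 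Once you replace ``locally nilpotent'' by ``adically complete via the right action'' and argue through isotypic components, your proof becomes the paper's.
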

The proof of Proposition \ref{prop: equivalent axioms} will be provided after some preparation. Recall that $\CZ=Z\otimes Z$ is the center of $U \otimes U^{\textnormal{op}}$. Let $\CZ^{\chi, \chi'}$ be the completion of $\CZ$ at the closed point $(\chi, \chi')$ in $\fh^*/(W, \cdot) \x \fh^*/(W, \cdot)$. For any $M$ in $\HC(U)$, we will show that the tensor product $M \otimes_{\CZ} \CZ^{\chi,\chi'}$ is finitely generated as a left $U^\chi$-module, therefore,  $M\otimes_{\CZ}\CZ^{\chi, \chi'}$ belongs to $\HC(U^{\chi, \chi'})$. By Proposition \ref{prop property of HCbim}, for any finite dimensional $\g$-representation $V$, the space $\Hom_\g(V,M)$ is finitely generated as a left $Z$-module and as a right $Z$-module. Therefore, $\Hom_\g(V,M) \otimes_\CZ\CZ^{\chi, \chi'}$ is the completion of the $\CZ$-module $\Hom_\g(V,M)$ at the closed point $(\chi, \chi')$, and $Z^{\wedge_\chi}\otimes_Z \Hom_\g(V,M)$ is the completion of the left $Z$-module $\Hom_\g(V,M)$ at the ideal defining the closed subset $\chi\x \fh^*/(W, \cdot)$. Furthermore, by Remark \ref{rem support of HCbim}, there is a set $S$ of finitely many points in $\fh^*/(W, \cdot)$ such that 
\[\textnormal{Supp}\Hom_\g(V, M)\cap \chi\x \fh^*/(W, \cdot)=\chi\x S.\]
Let $I$ be the defining ideal of $\chi \x S$ in $\CZ$, and let $\CZ^{\wedge_I}$ be the completion of $\CZ$ at the ideal $I$. Then we have  isomorphisms of $Z^{\wedge_\chi}\otimes Z$-modules:
\[ Z^{\wedge_\chi}\otimes_Z \Hom_\g(V,M)\cong  \Hom_\g(V,M)\otimes_\CZ \CZ^{\wedge_I} \cong \bigoplus_{\chi'\in S}\Hom_\g(V,M)\otimes_\CZ \CZ^{\chi, \chi'}.\]
Similarly, we also have a decomposition of $Z\otimes Z^{\wedge_{\chi'}}$-modules:
\[ \Hom_\g(V,M)\otimes_Z Z^{\wedge_{\chi'}} \cong \bigoplus_\chi \Hom_\g(V,M)\otimes_\CZ\CZ^{\chi, \chi'}.\]
These two decompositions imply the following lemma:
\begin{Lem} For any $M$ in $\HC(U)$, we have the following decompositions:
\Eqn{ Z^{\wedge_\chi}\otimes_Z M\cong \bigoplus_{\chi'} M \otimes_\CZ\CZ^{\chi, \chi'} \tab \text{of $(U^\chi,U)$-bimodules,}\\
M\otimes_Z Z^{\wedge_{\chi'}} \cong \bigoplus_\chi M\otimes_\CZ\CZ^{\chi, \chi'} \tab \text{of $(U,U^{\chi'})$-bimodules.}}
As a consequence, $M\otimes_\CZ\CZ^{\chi, \chi'}$ is finitely generated as a left $U^\chi$-module and as a right $U^{\chi'}$-module. 
\end{Lem}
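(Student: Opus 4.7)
My plan is to reduce the statement to commutative algebra on the $\g$-isotypic components of $M$ under the adjoint action. The center $\CZ = Z \otimes Z$ of $U \otimes U^{\mathrm{op}}$ commutes with the adjoint $\g$-action, so the isotypic decomposition
\[
M \;\cong\; \bigoplus_{V \in \mathrm{Irr}(\g)} V \otimes \Hom_\g(V, M^{\ad})
\]
is $\CZ$-linear. The three operations appearing in the lemma---$\cdot \otimes_\CZ \CZ^{\chi,\chi'}$, $Z^{\wedge_\chi} \otimes_Z \cdot$, and $\cdot \otimes_Z Z^{\wedge_{\chi'}}$---are applied purely through the $\CZ$-action, so each respects this decomposition. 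It therefore suffices to establish both claimed isomorphisms after replacing $M$ by $N_V := \Hom_\g(V, M^{\ad})$, regarded as a $\CZ$-module.

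Fix $V$ and write $N = N_V$. By Proposition \ref{prop property of HCbim}(c), $N$ is finitely generated over $Z$ on both sides, hence over $\CZ$. By Remark \ref{rem support of HCbim}, the support of $N$ in $\Spec \CZ = \Spec Z \times \Spec Z$ meets the slice $\{\chi\} \times \Spec Z$ in a finite set of closed points $\{\chi\} \times S$. Finite generation then identifies $Z^{\wedge_\chi} \otimes_Z N$ with the $\CZ$-adic completion of $N$ at the ideal $I = \bigcap_{\chi' \in S} \m_{(\chi, \chi')}$. Since the maximal ideals $\m_{(\chi,\chi')}$ for $\chi' \in S$ are pairwise coprime, the standard Chinese-remainder argument applied to each $\CZ/I^n$ splits this completion as a product of local ones, yielding
\[
Z^{\wedge_\chi} \otimes_Z N \;\cong\; \bigoplus_{\chi' \in S} N \otimes_\CZ \CZ^{\chi,\chi'}.
\]
Since $N \otimes_\CZ \CZ^{\chi,\chi'}$ vanishes for $\chi' \notin S$, the sum extends trivially over all $\chi' \in \Spec Z$. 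Summing across $V$ reassembles the first bimodule decomposition; the second follows identically after swapping the roles of left and right.

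For the stated consequence, Proposition \ref{prop property of HCbim}(b) supplies a surjection $V_0 \otimes U \twoheadrightarrow M$ of Harish-Chandra bimodules for some finite-dimensional $V_0$. Tensoring on the left over $Z$ with $Z^{\wedge_\chi}$ produces a surjection $V_0 \otimes U^\chi \twoheadrightarrow Z^{\wedge_\chi} \otimes_Z M$ of left $U^\chi$-modules, so the target is finitely generated over $U^\chi$ on the left. Via the first isomorphism, the summand $M \otimes_\CZ \CZ^{\chi,\chi'}$ inherits this finite generation; the right-module statement is symmetric, using the second isomorphism. The main technical obstacle I anticipate is the identification of $Z^{\wedge_\chi} \otimes_Z N$ with the $\CZ$-adic completion at $I$: this step relies crucially on the finiteness of $S$ (from Remark \ref{rem support of HCbim}) together with finite generation of $N$ over $\CZ$, and on the compatibility of one-sided central completion with completion at the full support. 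Once that identification is justified, the remainder of the argument is bookkeeping.
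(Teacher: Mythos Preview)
Your proposal is correct and follows essentially the same route as the paper: both reduce to the isotypic pieces $N_V=\Hom_\g(V,M)$, use Proposition~\ref{prop property of HCbim}(c) and Remark~\ref{rem support of HCbim} to identify $Z^{\wedge_\chi}\otimes_Z N_V$ with the $\CZ$-completion at the finite set $\{\chi\}\times S$, and then split via the Chinese remainder theorem. Your derivation of the finite-generation consequence via a surjection from $V_0\otimes U^\chi$ is slightly more explicit than the paper's, but the underlying idea is the same.
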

Now we have that $M\otimes_\CZ \CZ^{\chi, \chi'}$ is an object in $\HC(U^{\chi, \chi'})$.
\begin{Def} Define the functor $C^{\chi, \chi'}: \HC(U)\rightarrow \HC(U^{\chi, \chi'})$ by 
\[ C^{\chi, \chi'}(M):=M\otimes_\CZ\CZ^{\chi, \chi'}.\]
\end{Def}
The functor $C^{\chi, \chi'}$ is exact. For any finite dimensional $\g$-representation $V$, we set 
\Eq{\label{Def of proj bimod} P^{\chi,\chi'}_V:=C^{\chi, \chi'}(V\otimes U(g)).}
\begin{Def} \label{def translation bimod}Suppose there is a pair of dominant weights $(\mu, \lambda)$ such that $\mu \in \chi, \lambda \in \chi'$ and $\mu-\lambda \in \Lambda$.  The \emph{translation bimodule} $P^{\mu, \lambda}$ is defined by $P^{\mu, \lambda}:=C^{\chi, \chi'}(L(\mu-\lambda)\otimes U(\g))$.
\end{Def}

As its name suggests, the translation bimodules are closely related to the translation functors of the category $\hat{O}$, see Lemma \ref{lem image of complete Verma}. We now can give the proof for Proposition \ref{prop: equivalent axioms}.

\begin{proof}[Proof of Proposition \ref{prop: equivalent axioms}]
Let $M$ be a $(U^\chi, U^{\chi'})$-bimodule which satisfies the axiom \ref{S1} and \ref{S2}. We will show that $M$ satisfies the axiom \ref{S22}. There is a finite dimensional $\g$-representation $V$ such that we have a surjective morphism of $(U^\chi, U)$-bimodules:
\[ U^\chi\otimes V\cong \oplus_\lambda P^{\chi, \chi'}_V \twoheadrightarrow  M\]
which then induces an epimorphism of $(U^\chi, U^{\chi'})$-bimodules:
\[ P^{\chi,\chi'}_V\twoheadrightarrow M.\]
Since $U^{\chi'}$ is Noetherian  and $V\otimes U^{\chi'}=\oplus_\chi P^{\chi, \chi'}_V$, it follows that $P^{\chi, \chi'}_V$ is finitely generated as a right $U^{\chi'}$-module. Therefore, $M$ is finitely generated as a right $U^{\chi'}$-module. 

The proof for the other direction is similar.
\end{proof}

We now list various basis properties of the category $\HC(U^{\chi, \chi'})$.
\begin{Cor} \label{prop of HCchi}
a) $\Hom_{\HC(U^{\chi, \chi'})}(P^{\chi, \chi'}_V, M)=\Hom_\g(V,M^{\ad})$ for any $M$ in $\HC(U^{\chi, \chi'})$. Hence, $P^{\chi, \chi'}_V$ is projective in $\HC(U^{\chi, \chi'})$.

b) $P^{\chi, \chi'}_V$ is projective as a left $U^\chi$-module and projective as a right $U^{\chi'}$-module.

c)  Any object in $\HC(U^{\chi, \chi'})$ is covered by some projective object of the form $P^{\chi, \chi'}_V$. In particular, any projective object in $\HC(U^{\chi, \chi'})$ is the direct summand of $P^{\chi, \chi'}_V$ for some $V$.

d) $\Hom_\g(V,M)$ is finitely generated as a left $Z^{\wedge_\chi}$-module, as a right $Z^{\wedge_{\chi'}}$-module, and as a $\CZ^{\chi, \chi'}$-module.

e) Let $J_{\chi, \chi'}$ be the maximal ideal of $\CZ$ corresponding to the closed point $(\chi, \chi')$. If $M/MJ_{\chi, \chi'}=0,$ then $M=0$.

f) $\Hom_\g(V,M)=\varprojlim \Hom_\g(V,M/\m_{\chi}^kM)=\varprojlim \Hom_\g(V,M/M\m_{\chi'}^k)$ for any finite dimensional $\g$-representaion $V$.

g) \label{cor: surj maps} Let $M, N$ be objects in $\HC(U^{\chi, \chi'})$. Let us assume that a map $\phi: M \rightarrow N$ induces a surjective map $M/M\m_{\chi'}\rightarrow N/N\m_{\chi'}$. Then $\phi$ is also surjective.

\end{Cor}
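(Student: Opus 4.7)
The plan is to reduce to Nakayama's lemma on the complete local Noetherian ring $Z^{\wedge_{\chi'}}$, carried out isotypic-component by isotypic-component for the adjoint $\g$-action. Let $N' := \mathrm{coker}(\phi) \in \HC(U^{\chi,\chi'})$. The hypothesis that $\phi$ induces a surjection on $\m_{\chi'}$-quotients rewrites as $\phi(M) + N\m_{\chi'} = N$, i.e.\ $N' = N'\m_{\chi'}$. The goal becomes: show that any $N' \in \HC(U^{\chi,\chi'})$ with $N' = N'\m_{\chi'}$ must vanish.

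\textbf{Isotypic decomposition.} Local finiteness of the adjoint $\g$-action on $N'$ gives the multiplicity decomposition
\[
N' \cong \bigoplus_V V \otimes \Hom_\g(V, N'^{\ad}),
\]
the sum being taken over finite-dimensional irreducible $\g$-representations $V$. The ideal $\m_{\chi'} \subset Z$ acts by right multiplication by central elements, and central multiplication is $\g$-equivariant for the adjoint action, so the right $\m_{\chi'}$-action preserves the decomposition and induces on each multiplicity space the right $Z^{\wedge_{\chi'}}$-action coming from part (d). Comparing the identity $N'=N'\m_{\chi'}$ componentwise therefore yields
\[
\Hom_\g(V, N'^{\ad}) \;=\; \Hom_\g(V, N'^{\ad}) \cdot \m_{\chi'}
\]
for every irreducible $V$.

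\textbf{Conclusion via Nakayama.} By part (d), $\Hom_\g(V, N'^{\ad})$ is finitely generated as a right $Z^{\wedge_{\chi'}}$-module, and $Z^{\wedge_{\chi'}}$ is a complete Noetherian local ring with maximal ideal $\m_{\chi'}$. Nakayama's lemma therefore forces $\Hom_\g(V, N'^{\ad}) = 0$ for every $V$, and the isotypic decomposition gives $N' = 0$, i.e.\ $\phi$ is surjective. The real technical content sits in part (d), not in (g): namely, proving finite generation of $\Hom_\g(V, M^{\ad})$ over $\CZ^{\chi,\chi'}$ for a \emph{general} $M \in \HC(U^{\chi,\chi'})$, rather than just for one arising from a completion $C^{\chi,\chi'}(\cdot)$. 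This is handled by covering $M$ by a projective of the form $P^{\chi,\chi'}_{V'}$ via (c), where finite generation reduces to Kostant's theorem, and then pushing finite generation across the surjection to $M$ using exactness of $\Hom_\g(V,\cdot)$ on $\g$-locally finite modules (complete reducibility).
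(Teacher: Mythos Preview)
Your proof of part (g) is correct and follows essentially the same approach as the paper: both arguments pass to the isotypic components $\Hom_\g(V,-)$, invoke part (d) for finite generation over the local ring $Z^{\wedge_{\chi'}}$, and conclude by Nakayama. The only cosmetic difference is that the paper shows surjectivity of $\Hom_\g(V,M)\to\Hom_\g(V,N)$ directly, while you work with the cokernel $N'$ and show $\Hom_\g(V,N')=0$; since $\Hom_\g(V,-)$ is exact on adjoint-locally-finite modules, these are equivalent formulations of the same Nakayama step.
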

\begin{proof}
a) It is obvious from the definition of $P^{\chi, \chi'}_V$.

b) It follows by the direct sum decompositions $U^\chi\otimes V=\oplus_{\chi'} P^{\chi,\chi'}_V$ and $V\otimes U^{\chi'}=\oplus_\chi P^{\chi, \chi'}_V$.

c) This has been showed in the proof of Proposition $\ref{prop: equivalent axioms}$

d)  By part $c)$, it is enough to prove the statement for $P^{\chi, \chi'}_V$, which is obvious.

e) Note that $\Hom_\g(V,M/MJ_{\chi, \chi'})\cong \Hom_\g(V,M)/J_{\chi, \chi'}\Hom_\g(V,M)$ for any finite dimensional $\g$-representation $V$. If $M/MJ_{\chi, \chi'}=0$ then $\Hom_\g(V,M)/J_{\chi, \chi'}\Hom_\g(V, M)=0$. On the other hand, $\Hom_\g(V,M)$ is finitely generated over $\CZ^{\chi, \chi'}$. Therefore, $\Hom_\g(V,M)=0$ for any finite dimensional $\g$-representation $V$ which implies $M=0$.

f) Note that  $\Hom_\g(V,M/\m_{\chi}^k M)=\Hom_\g(V,M)/\m_{\chi}^k\Hom_\g(V,M)$ for any $k$. Since $\Hom_\g(V,M)$ is finitely generated as a left $Z^{\wedge_\chi}$-module, we must have
\[ \Hom_\g(V,M)=\varprojlim \Hom_\g(V,M)/\m_{\chi}^k\Hom_\g(V,M)=\varprojlim \Hom_\g(V,M/\m_{\chi}^k M).\]

g) It is enough to show that the induced map 
\Eq{\label{eq10} \phi: \Hom_\g(V, M)\rightarrow \Hom_\g(V,N)}
is surjective for any finite dimensional $\g$-representation $V$. Since the induced map $M/M\m_{\chi'}\rightarrow N/N\m_{\chi'}$ is surjective, the following map is also surjective
\[ \Hom_\g(V, M)/\Hom_\g(V,M)\m_{\chi'}\rightarrow \Hom_\g(V,N)/\Hom_\g(V,N)\m_{\chi'}.\]
On the other hand, $\Hom_\g(V,M)$ and $\Hom_\g(V,N)$ are finitely generated as right $Z^{\wedge_{\chi'}}$-modules and $Z^{\wedge_{\chi'}}$ is a local ring. Therefore the map $(\ref{eq10})$ is surjective.
\end{proof}

\begin{Rem} \label{rem HCbim is zero} Since any object in $\HC(U^{\chi, \chi'})$ is covered by some projective object of the form $P^{\chi, \chi'}_V$, the category $\HC(U^{\chi, \chi'})$ is zero unless $\chi$ and $\chi'$ are compatible.
\end{Rem}

\begin{Lem} \label{lem generator V}There is a finite dimensional $\g$-representation $V_0$ such that $\Hom_\g(V_0, M)=0$ implies $M=0$ for any $M$ in $\HC(U^{\chi, \chi'})$.
\end{Lem}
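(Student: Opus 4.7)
The plan is to construct $V_0$ as a finite direct sum $\bigoplus_i V_i$, one summand for each simple Harish-Chandra bimodule with central characters $(\chi,\chi')$, and then verify the claim by passing from an arbitrary $M\in \HC(U^{\chi,\chi'})$ to the quotient $M/MJ_{\chi,\chi'}$, where $J_{\chi,\chi'}$ is the maximal ideal of $\CZ$ at the point $(\chi,\chi')$. The advantage of this quotient is that the $\CZ$-action on it factors through $\CZ/J_{\chi,\chi'}=\C$, putting us in the situation covered by Proposition~\ref{prop char of HCbim}.

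To build $V_0$ I would first invoke Proposition~\ref{prop char of HCbim}(a): since $J_{\chi,\chi'}$ has finite codimension in $\CZ$, only finitely many simple objects $L_1,\dots,L_n$ of $\HC(U)$ are annihilated by $J_{\chi,\chi'}$. For each such $L_i$, Proposition~\ref{prop property of HCbim}(b) realizes $L_i$ as a quotient of $V_i\otimes U(\g)$ for some finite-dimensional $\g$-representation $V_i$, which by part (a) of the same proposition means $\Hom_\g(V_i,L_i)\neq 0$. I would then set
\[
V_0:=\bigoplus_{i=1}^n V_i.
\]

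Now take a nonzero $M\in \HC(U^{\chi,\chi'})$. Corollary~\ref{prop of HCchi}(e) forces $M/MJ_{\chi,\chi'}\neq 0$, and Corollary~\ref{prop of HCchi}(d) gives that $\Hom_\g(V,M/MJ_{\chi,\chi'})$ is finite-dimensional over $\C$ for every finite-dimensional $\g$-module $V$. Proposition~\ref{prop char of HCbim}(b) then upgrades $M/MJ_{\chi,\chi'}$ to a Harish-Chandra bimodule of finite length, whose composition factors must lie in $\{L_1,\dots,L_n\}$.

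Finally, since $\g$ is semisimple, every locally finite $\g$-module is semisimple, so any short exact sequence of locally finite $\g$-modules splits $\g$-equivariantly. Iterating this along a composition series of $M/MJ_{\chi,\chi'}$ shows that each composition factor $L_{i_0}$ appears as a $\g$-direct summand of $M/MJ_{\chi,\chi'}$; combined with $\Hom_\g(V_{i_0},L_{i_0})\neq 0$ this yields $\Hom_\g(V_0,M/MJ_{\chi,\chi'})\neq 0$. The same semisimplicity makes the surjection $M\twoheadrightarrow M/MJ_{\chi,\chi'}$ of locally finite $\g$-modules induce a surjection on $\Hom_\g(V_0,-)$, so $\Hom_\g(V_0,M)\neq 0$. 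The only delicate step is confirming that $M/MJ_{\chi,\chi'}$ has finite length, which rests on combining the finite generation of $\Hom_\g(V,M)$ over $\CZ^{\chi,\chi'}$ from Corollary~\ref{prop of HCchi}(d) with Proposition~\ref{prop char of HCbim}(b); the rest is routine semisimplicity.
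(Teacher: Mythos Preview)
Your proof is correct and follows essentially the same approach as the paper's own argument: build $V_0$ out of the finitely many simples annihilated by $J_{\chi,\chi'}$ (Proposition~\ref{prop char of HCbim}(a)), pass from $M$ to $M/MJ_{\chi,\chi'}$, and use Corollary~\ref{prop of HCchi}(e) to conclude. The only differences are cosmetic: the paper argues the contrapositive and uses the identification $\Hom_\g(V,M)/J_{\chi,\chi'}\cong \Hom_\g(V,M/MJ_{\chi,\chi'})$ directly, leaving the finite-length and semisimplicity steps implicit, whereas you spell them out via Proposition~\ref{prop char of HCbim}(b).
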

\begin{proof}
Let $J_{\chi, \chi'}$ be the maximal ideal of $\CZ$ corresponding to the closed point $(\chi, \chi')$. In  Proposition \ref{prop char of HCbim}, let $V_0$ be a finite dimensional $\g$-representation such that $\Hom_\g(V,L)\neq 0$ for all simple Harish-Chandra bimodules $L$ such that $J_{\chi, \chi'} L=0$. 

Let $M$ in $\HC(U^{\chi, \chi'})$ such that $\Hom_\g(V_0, M)=0$. For any finite dimensional $\g$-representation $V$, we have $\Hom_\g(V,M)/J_{\chi, \chi'}\cong \Hom_\g(V, M/MJ_{\chi, \chi'})$. So we have $\Hom_\g(V_0, M/MJ_{\chi, \chi'})=0$, hence $M/MJ_{\chi, \chi'}=0$. Therefore, $M=0$.
\end{proof}

We now provide a characterization of Harish-Chandra bimodules in $\HC(U^{\chi, \chi'})$.
\begin{Prop} Let $M$ be a $(U^\chi, U^{\chi'})$-bimodule whose adjoint $\g$-action is locally finite. If $\Hom_\g(V,M)$ is finitely generated  as a left $Z^{\wedge_\chi}$-module for any finite dimensional $\g$-representation $V$ then $M$ is Harish-Chandra. Similarly, if $\Hom_\g(V,M)$ is finitely generated  as a right $Z^{\wedge_{\chi'}}$-module for any finite dimensional $\g$-representation $V$, then $M$ is also Harish-Chandra.
\end{Prop}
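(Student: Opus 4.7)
The plan is to handle the left hypothesis first; the right hypothesis then follows by the symmetric argument combined with Proposition \ref{prop: equivalent axioms}. So assume $\Hom_\g(V,M)$ is finitely generated over $Z^{\wedge_\chi}$ for every finite-dimensional $\g$-representation $V$, and aim to verify axiom \ref{S2}. Following the template of Lemma \ref{lem generator V}, I first use Proposition \ref{prop char of HCbim} a) to fix a single $V_0$ such that $\Hom_\g(V_0,L) \neq 0$ for every simple Harish-Chandra bimodule $L$ with $J_{\chi, \chi'} L = 0$. The hypothesis guarantees that $\Hom_\g(V_0, M)$ is finitely generated over the Noetherian local ring $Z^{\wedge_\chi}$, so I pick generators $f_1,\dots, f_n$. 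By Corollary \ref{prop of HCchi} a) each $f_i$ yields a bimodule map $F_i\colon P^{\chi, \chi'}_{V_0}\to M$, and I assemble them into $F\colon \bigoplus_{i=1}^n P^{\chi, \chi'}_{V_0}\to M$. Set $N:=\mathrm{im}(F)$ and $C:= M/N$. Because $N$ is a quotient of a projective object of $\HC(U^{\chi, \chi'})$, it already lies in $\HC(U^{\chi, \chi'})$, and the problem reduces to showing $C=0$.

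Next I show $\Hom_\g(V_0, C) = 0$. Since $\g$ is semisimple, every finite-dimensional $\g$-representation is projective in the category of locally finite $\g$-modules, so $\Hom_\g(V_0, -)$ is exact on $0\to N \to M\to C\to 0$. By construction each $f_i$ factors through $N$, and the $Z^{\wedge_\chi}$-span of the $f_i$ inside $\Hom_\g(V_0, M)$ is the entire module; hence $\Hom_\g(V_0, N) = \Hom_\g(V_0, M)$ and $\Hom_\g(V_0, C) = 0$.

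Finally, to get $C = 0$ I rerun the argument of Lemma \ref{lem generator V} in the slightly broader setting of $C$. The hypothesis descends to $C$: Noetherianity of $Z^{\wedge_\chi}$ makes $\Hom_\g(V, C)$ finitely generated over $Z^{\wedge_\chi}$, hence over $\CZ^{\chi, \chi'}$. Consequently $\Hom_\g(V, C/CJ_{\chi, \chi'}) = \Hom_\g(V, C)/J_{\chi, \chi'}\Hom_\g(V,C)$ is finite-dimensional for every $V$, and Proposition \ref{prop char of HCbim} b) makes $C/CJ_{\chi, \chi'}$ a Harish-Chandra bimodule of finite length in $\HC(U)$. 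Every simple subquotient is annihilated by $J_{\chi, \chi'}$ and therefore detected by $V_0$; but $\Hom_\g(V_0, C/CJ_{\chi, \chi'}) = 0$, forcing $C/CJ_{\chi, \chi'} = 0$. Nakayama's lemma applied to the finitely generated local $\CZ^{\chi, \chi'}$-module $\Hom_\g(V, C)$ then yields $\Hom_\g(V, C) = 0$ for every $V$, and $\g$-local finiteness of $C$ forces $C=0$. The step I expect to be the main subtlety is the apparent circularity here: one is tempted to invoke Lemma \ref{lem generator V} directly on $C$, but that lemma assumes $C \in \HC(U^{\chi, \chi'})$, which is precisely what we are trying to establish. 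The resolution is to notice that the lemma's proof only uses the finite generation of $\Hom_\g(V, C)$ over the local ring $\CZ^{\chi, \chi'}$, and this property descends from the hypothesis on $M$.
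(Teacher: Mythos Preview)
Your proof is correct in substance and takes a genuinely different route from the paper. The paper argues abstractly: every object of $U^{\chi,\chi'}\bim^{\mathrm{lf}}$ is the union of its Harish-Chandra subbimodules, so the detector $V_0$ of Lemma~\ref{lem generator V} witnesses nonzero objects throughout $U^{\chi,\chi'}\bim^{\mathrm{lf}}$; since $\Hom_\g(V_0,M)$ is Noetherian over $Z^{\wedge_\chi}$, every ascending chain of subbimodules of $M$ stabilizes, whence $M$ equals one of its HC subbimodules. You instead \emph{construct} a surjection from a projective and kill the cokernel. Both arguments pivot on the same ingredients ($V_0$, passage to $C/CJ_{\chi,\chi'}$, Nakayama), but your packaging is more explicit while the paper's is more conceptual. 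Your careful handling of the apparent circularity with Lemma~\ref{lem generator V} is exactly right.

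Two small imprecisions are worth flagging. First, you invoke Corollary~\ref{prop of HCchi}~a) to produce $F_i\colon P^{\chi,\chi'}_{V_0}\to M$, but that corollary is stated only for $M\in\HC(U^{\chi,\chi'})$. A clean workaround that stays within what is proved: extend each $f_i$ to a $(U,U^{\chi'})$-linear map $\tilde F_i\colon V_0\otimes U^{\chi'}\to M$ and set $N=\sum_i\operatorname{im}(\tilde F_i)$. This $N$ is finitely generated over $U^{\chi'}$; it is also stable under the left $Z^{\wedge_\chi}$-action because the $f_i$ generate $\Hom_\g(V_0,M)$ over $Z^{\wedge_\chi}$, so $z\tilde F_i=\sum_j a_{ij}\tilde F_j$ for suitable $a_{ij}\in Z^{\wedge_\chi}$. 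Hence $N\in\HC(U^{\chi,\chi'})$ by Proposition~\ref{prop: equivalent axioms}, and the rest of your argument proceeds unchanged. Second, in the Nakayama step you call $\Hom_\g(V,C)$ a finitely generated $\CZ^{\chi,\chi'}$-module; it is a priori only a $Z^{\wedge_\chi}\otimes Z^{\wedge_{\chi'}}$-module finitely generated over $Z^{\wedge_\chi}$, and the passage to the completed tensor product is not automatic. The conclusion is still valid: from $\Hom_\g(V,C)=\m_\chi\Hom_\g(V,C)+\m_{\chi'}\Hom_\g(V,C)$, reduce modulo $\m_\chi$ to a finite-dimensional $Z^{\wedge_{\chi'}}$-module, apply Nakayama there, then apply Nakayama over $Z^{\wedge_\chi}$.
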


\begin{proof} Let $U^{\chi, \chi'}\bim^{\text{lf}}$ denote the full subcategory of the category $U^{\chi, \chi'}$-bim consisting of all objects whose adjoint $\g$-actions are locally finite. Any object in $U^{\chi, \chi'}\bim^{\text{lf}}$ is equal to the union of its Harish-Chandra subbimodules. By Lemma \ref{lem generator V}, there is a finite dimensional $\g$-representation $V_0$ such that $\Hom_\g(V_0,M)=0$ implies $M=0$ for any $M$ in $\HC(U^{\chi, \chi'})$. This implies that there is a finite dimensional $\g$-representation $V_0$ such that $\Hom_\g(V_0,M)=0$ implies $M=0$ for any $M$ in  $U^{\chi, \chi'}\bim^{\text{lf}}$.

Let us  prove the first statement of the proposition only since the proof for the second statement is similar. Suppose $M$ is in $U^{\chi, \chi'}\bim^{\text{lf}}$ such that $\Hom_\g(V,M)$ is finitely generated as a left $Z^{\wedge_\chi}$-module for any finitely dimensional $\g$-representation of $V$. In particular, the space $\Hom_\g(V_0, M)$ is finitely generated as a left $Z^{\wedge_\chi}$-module. One then can easily show that $M$ is Noetherian in the category $U^{\chi, \chi'}\bim^{\text{lf}}$.


Note that the sum of two Harish-Chandra subbimodules in $M$ is also a Harish-Chandra subbimodule. Therefore, the fact that $M$ is Noetherian and equal to the union of its Harish-Chandra subbimodules implies that $M$ is Harish-Chandra. 
\end{proof}

 \subsection{Monoidal structure}\

 Let $M$ in $\HC(U^{\chi'', \chi})$ and $N$ in $\HC(U^{\chi, \chi'})$. Then $M\otimes_{U^{\chi}}N$ is an object in $\HC(U^{\chi'', \chi'})$ by Lemma \ref{prop: equivalent axioms}. Therefore, we can define functors:
 \Eqn{(-)\otimes_{U^\chi}(-): \HC(U^{\chi'',\chi})\x \HC(U^{\chi, \chi'}) \rightarrow \HC(U^{\chi'', \chi'})}
 by $(M,N)\mapsto M\otimes_{U^\chi} N$. These functors satisfy the unit and associativity axioms:
 \begin{itemize}
     \item In case $\chi''=\chi$, we have a canonical isomorphism of functors:
     \[ U^\chi \otimes_{U^\chi}(-)\cong \text{id},\]
     and in the case $\chi=\chi'$, we have a canonical isomorphism of functors:
     \[ (-)\otimes_{U^\chi} U^\chi \cong \text{id};\]
     \item For four points $\chi_1, \chi_2, \chi_3, \chi_4$ in $\fh^*/(W,\cdot)$, we have a canonical isomorphism:
     \[ ((-)\otimes_{U^{\chi_2}} (-))\otimes_{U^{\chi_3}}(-)\xrightarrow[]{\cong} (-)\otimes_{U^{\chi_2}}((-)\otimes_{U^{\chi_3}}(-))\]
     of functors from $\HC(U^{\chi_1, \chi_2})\x \HC(U^{\chi_2, \chi_3})\x \HC(U^{\chi_3, \chi_4})$ to $\HC(U^{\chi_1, \chi_4})$.
 \end{itemize}

 In particular, this construction equips $\HC(U^{\chi, \chi})$ with structure of a monoidal category. Of course, one needs to check some coherences, but it is not hard to see that they hold in this case.

\subsection{Relation between the category $\hat{O}$ and the complete Harish-Chandra bimodules}\

Let us define a functor
\[ \HC(U^{\chi, \chi'})\x \hat{O}_{\chi'}\rightarrow \hat{O}_{\chi}\]
by mapping $(M, Y)$ to $M\otimes_{U^{\chi'}}Y$ for any $M\in \HC(U^{\chi, \chi'})$ and any $Y\in \hat{O}_{\chi'}$.

In order to this functor to be well-defined, we need to check that $M\otimes_{U^{\chi'}}Y$ belongs to $\hat{O}_{\chi}$. Indeed, the tensor product $M\otimes_{U^{\chi'}} Y$ is naturally equipped with a finitely generated $U_R(\g)$-module structure come from the left $U(\g)$-action $M$ and the right $R$-action on $Y$ so that $M\otimes_{U^{\chi'}}Y$ satisfies the axiom \ref{T3}. The locally finiteness of the adjoint $\g$-action on $M$ implies that $M\otimes_{U^{\chi'}} Y$ has a deformed weight decomposition  as in the  axiom \ref{T1}. The adjoint $\n$-action on $M$ is locally nilpotent hence $M\otimes_{U^{\chi'}} Y$ satisfies the axiom \ref{T2}. So $M\otimes_{U^{\chi'}}Y$ is indeed an object in $\hat{O}$. Now, since the left $Z$-action on $M$ factors through the $Z^{\wedge_\chi}$-action, the support of $M\otimes_{U^{\chi'}} Y$ in Spec $Z\otimes R$ contains a unique closed point $(\chi, \m_0)$. Therefore, $M\otimes_{U^{\chi'}} Y$ is contained in $\hat{O}_{\chi}$.

Our main results in this section are Proposition \ref{prop actions on tranbimod} and Proposition \ref{prop number of complete HCbim}. 
\begin{Lem}\label{lem HCproj to Oproj} If $P$ is a projective object in $\HC(U^{\chi, \chi'})$ and $Y$  is a projective object in $\hat{O}_{\chi'}$ then $P \otimes_{U^{\chi'}} Y$ is a projective object in $\hat{O}_{\chi}$.
\end{Lem}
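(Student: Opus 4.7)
The plan is to reduce to the standard projectives $P^{\chi, \chi'}_V$ and then identify $P \otimes_{U^{\chi'}} Y$ with the value of a projective functor applied to $Y$, whose behaviour on projectives in $\hat{O}$ has already been recorded in Section \ref{ssec: Deformed Cat O}. First, by Corollary \ref{prop of HCchi} every projective $P$ in $\HC(U^{\chi, \chi'})$ is a direct summand of some $P^{\chi, \chi'}_V$, and since $(-) \otimes_{U^{\chi'}} Y$ is additive it preserves direct summands; so it suffices to prove the lemma in the case $P = P^{\chi, \chi'}_V$.

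The main computation uses the $(U(\g), U^{\chi'})$-bimodule decomposition
$$V \otimes U^{\chi'} = (V \otimes U(\g)) \otimes_Z Z^{\wedge_{\chi'}} \cong \bigoplus_{\chi} P^{\chi, \chi'}_V,$$
obtained by applying the previously established decomposition $M \otimes_Z Z^{\wedge_{\chi'}} \cong \bigoplus_{\chi} M \otimes_{\CZ} \CZ^{\chi, \chi'}$ to $M = V \otimes U(\g)$. Tensoring with $Y$ over $U^{\chi'}$ and using associativity yields
$$V \otimes Y = (V \otimes U^{\chi'}) \otimes_{U^{\chi'}} Y \cong \bigoplus_{\chi} P^{\chi, \chi'}_V \otimes_{U^{\chi'}} Y.$$
On the other hand, $V \otimes Y$ admits the intrinsic block decomposition $V \otimes Y = \bigoplus_{\chi} \pr_{\chi}(V \otimes Y)$ inside $\hat{O}$. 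Since the summand $P^{\chi, \chi'}_V \otimes_{U^{\chi'}} Y$ was shown (in the discussion immediately preceding the lemma) to lie in $\hat{O}_{\chi}$, matching the two decompositions forces
$$P^{\chi, \chi'}_V \otimes_{U^{\chi'}} Y \cong \pr_{\chi}(V \otimes Y) = \CP^V_{\chi' \to \chi}(Y).$$

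To conclude, one invokes the fact recorded in Section \ref{ssec: Deformed Cat O} that the projective functor $\CP^V_{\chi' \to \chi}: \hat{O}_{\chi'} \to \hat{O}_{\chi}$ sends projectives to projectives; applying this to the projective object $Y$ gives the desired projectivity of $P^{\chi,\chi'}_V \otimes_{U^{\chi'}} Y$ in $\hat{O}_{\chi}$, and hence of $P \otimes_{U^{\chi'}} Y$ as a summand. I do not expect a substantive obstacle: the only point that needs care is checking that the block decomposition of $V \otimes U^{\chi'}$ by left central character on the bimodule side matches the decomposition of $V \otimes Y$ by $\pr_{\chi}$ on the $\hat{O}$ side, so that the summand-by-summand identification with the projective functor is legitimate.
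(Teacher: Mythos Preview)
Your proposal is correct and follows essentially the same approach as the paper: reduce to $P = P^{\chi,\chi'}_V$ via the fact that every projective is a summand of such, then identify $P^{\chi,\chi'}_V \otimes_{U^{\chi'}} Y$ with $\pr_\chi(V \otimes Y)$ and invoke that projective functors preserve projectives. The paper simply asserts the isomorphism $P^{\chi,\chi'}_V \otimes_{U^{\chi'}} Y \cong \pr_\chi(V \otimes Y)$ without further comment, whereas you have spelled out the argument via the decomposition $V \otimes U^{\chi'} \cong \bigoplus_\chi P^{\chi,\chi'}_V$; this extra detail is helpful but not a different route.
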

\begin{proof}
Since each projective  object in $\HC(U^{\chi, \chi'})$ is the direct summand of some $P^{\chi, \chi'}_V$, it is enough to prove the lemma in the case $P=P^{\chi, \chi'}_V=(V\otimes U(\g))\otimes_{\CZ}\CZ^{\chi, \chi'}$ . But then the lemma follows since $P^{\chi, \chi'}_V\otimes_{U^{\chi'}}Y \cong \pr_{\chi}(V\otimes Y)$ for any $Y$ in $\hat{O}_{\chi'}$.
\end{proof}

 The next proposition allows us to translate some statements about projective objects in the category $\HC(U^{\chi, \chi'})$ to statements about projective objects in the category $\hat{O}$. Furthermore, we will use this proposition to prove the Bernstein-Gelfand type equivalence between $\HC(U^{\chi, \chi'})$ and $\hat{O}_{\chi'}$ when $\chi'$ is regular, see Proposition \ref{prop number of complete HCbim}.
\begin{Prop}\label{prop ff on HCproj}Let $\lambda$ be a dominant weight in $\chi'$.  Let $P_1, P_2$ be two projective objects in $\HC(U^{\chi, \chi'})$. The following map
\[\Hom_{\HC(U^{\chi, \chi'})}(P_1,P_2) \otimes_{Z^{\wedge_{\chi'}}} R\xrightarrow[]{\psi}\Hom_{\hat{O}}(P_1\otimes_{U^{\chi'}}\hat{\Delta}(\lambda), P_2\otimes_{U^{\chi'}}\hat{\Delta}(\lambda))\]
is bijective, here we view $R$ as a $Z^{\wedge_{\chi'}}$-module via $Z^{\wedge_{\chi'}}\xrightarrow[]{\epsilon_\lambda} R^{W_\lambda}\hookrightarrow R$.
\end{Prop}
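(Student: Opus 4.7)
The plan is a Nakayama-lemma argument: I show both sides are finitely generated over $R$ with RHS free, then verify $\psi$ reduces modulo $\m_0$ to the classical Bernstein--Gelfand equivalence. Since both sides respect direct summands and $\psi$ is functorial, I first reduce to the case $P_1 = P^{\chi,\chi'}_V$, in which case $\Hom_{\HC(U^{\chi,\chi'})}(P_1, P_2) \cong \Hom_\g(V, P_2^{\ad})$ by Corollary \ref{prop of HCchi}\ref{cor: surj maps}. The LHS is then finitely generated over $R$: the Hom space is finitely generated over $Z^{\wedge_{\chi'}}$ by Corollary \ref{prop of HCchi}(d), so its tensor product with $R$ is finitely generated over $R$. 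By Lemma \ref{lem HCproj to Oproj}, each $P_i \otimes_{U^{\chi'}}\hat{\Delta}(\lambda)$ is projective in $\hat{O}_\chi$, so \eqref{Hom of proj} shows the RHS is a finitely generated free $R$-module.

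Next, I check $\psi \otimes_R \C$ is bijective. The composition $Z^{\wedge_{\chi'}} \xrightarrow{\epsilon_\lambda} R^{W_\lambda} \hookrightarrow R \twoheadrightarrow \C$ is evaluation at $\lambda$, hence kills $\m_{\chi'}$. Using Weyl's theorem, $\Hom_\g(V, -)$ is exact on locally finite $\g$-modules; applying it to $0 \to P_2\m_{\chi'} \to P_2 \to P_2/P_2\m_{\chi'} \to 0$ together with a surjection $P_2^{\oplus n} \twoheadrightarrow P_2\m_{\chi'}$ built from finitely many generators of $\m_{\chi'}$ yields the identification $\Hom_\g(V, P_2\m_{\chi'}) = \Hom_\g(V, P_2^{\ad})\m_{\chi'}$. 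Hence $\text{LHS}\otimes_R\C \cong \Hom_\g(V, (P_2/P_2\m_{\chi'})^{\ad}) = \Hom_{\HC^1_{\chi'}}(\bar P_1, \bar P_2)$, where $\bar P_i := P_i/P_i\m_{\chi'}$. The Bernstein--Gelfand equivalence (Theorem \ref{Thm BGG equivalence}) then identifies this with $\Hom_O(\bar P_1 \otimes_U \Delta(\lambda), \bar P_2 \otimes_U \Delta(\lambda))$. On the other side, $\hat{\Delta}(\lambda)\otimes_R\C = \Delta(\lambda)$ and $P_i \otimes_{U^{\chi'}}\Delta(\lambda) = \bar P_i \otimes_U \Delta(\lambda)$, so \eqref{Hom of proj} gives $\text{RHS}\otimes_R\C$ the same expression. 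A direct check from the definition of $\psi$ confirms that $\psi \otimes_R \C$ is exactly this BGG isomorphism.

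Finally, Nakayama forces $\psi$ surjective; since the RHS is free, the exact sequence $0 \to \ker\psi \to \text{LHS} \to \text{RHS} \to 0$ splits, giving $\text{LHS} \cong \ker\psi \oplus \text{RHS}$. Reducing modulo $\m_0$ and using that $\psi \otimes_R \C$ is bijective, we conclude $(\ker\psi)/\m_0\ker\psi = 0$; as $\ker\psi$ is finitely generated over the Noetherian local ring $R$, Nakayama again forces $\ker\psi = 0$. The main obstacle lies in the second paragraph: verifying the technical identification $\Hom_\g(V, P_2\m_{\chi'}) = \Hom_\g(V, P_2^{\ad})\m_{\chi'}$, and carefully tracing the definitions to confirm $\psi \otimes_R \C$ coincides with the Bernstein--Gelfand equivalence.
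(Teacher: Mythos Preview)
Your proof is correct and follows essentially the same approach as the paper: reduce to $P_1=P^{\chi,\chi'}_V$, use Lemma~\ref{lem HCproj to Oproj} and \eqref{Hom of proj} to see the RHS is free of finite rank over $R$, identify the reduction modulo $\m_0$ with the Bernstein--Gelfand map of Theorem~\ref{Thm BGG equivalence}, and conclude by Nakayama. The paper additionally reduces $P_2$ to $P^{\chi,\chi'}_{V'}$ and is terser about the final Nakayama step, but the argument is the same.
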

\begin{proof}
It suffices to prove the statement in the case $P_1=P^{\chi, \chi'}_V, P_2=P^{\chi, \chi'}_{V'}$ for some finite dimensional $\g$-representations $V,V'$. We set
\[ \uP_V= \; _\chi(V\otimes (U(\g)/\m_{\chi'})),\]
so that  $\uP_V$ is the direct summand of the Harish-Chandra bimodule $V\otimes (U(\g)/\m_{\chi'})$ whose support in Spec $\CZ$ is the closed point $(\chi, \chi')$. One can see that $\uP_V\cong P^{\chi, \chi'}_V/P^{\chi, \chi'}_V\m_{\chi'}$.

We claim that by specializing both sides of $\psi$ at the maximal ideal $\m_0$ of $R$, we get
\[ \Hom_{\HC(U)}(\uP_V, \uP_{V'})\xrightarrow[]{\upsi} \Hom_O(\uP_V\otimes_{U}\Delta(\lambda), \uP_{V'}\otimes_{U} \Delta(\lambda)).\]

It is easy to see that $P^{\chi, \chi'}_V\otimes_{U^{\chi'}} \hat{\Delta}(\lambda)/\m_0\cong \uP_V\otimes_{U}\Delta(\lambda)$. By Lemma \ref{lem HCproj to Oproj}, both $P^{\chi, \chi'}_V\otimes_{U^{\chi'}} \hat{\Delta}(\lambda)$ and $P^{\chi, \chi'}_{V'}\otimes_{U^{\chi'}} \hat{\Delta}(\lambda)$ are projective objects in $\hat{O}$. Therefore, by \eqref{Hom of proj},  the target of $\psi$ is a free $R$-module of finite rank and its  specilization the maximal ideal $\m_0$ is isomorphic to the target of $\upsi$ .

On the other hand, 
\Eqn{\Hom_{\HC(U^{\chi, \chi'})}(P^{\chi, \chi'}_V, P^{\chi, \chi'}_{V'})&\cong \Hom_\g(V, P^{\chi, \chi'}_{V'})\\
\Hom_{\HC(U)}(\uP_V, \uP_{V'})&\cong \Hom_\g(V,\uP_{V'}).}
Note that $\uP_{V'}\cong P^{\chi, \chi'}_{V'}/P^{\chi, \chi'}_{V'}\m_{\chi'}$. Therefore, the source of $\psi$ is finitely generated as a right $R$-module and its specialization at the maximal ideal $\m_0$ is isomorphic to the source of $\upsi$. So  our claim follows.

By Theorem \ref{Thm BGG equivalence}, the map $\upsi$ is bijective. Since the source of $\psi$ is finitely generated as a $R$-module and the target of $\psi$ is a free $R$-module of finite rank, the map $\psi$ must be an isomorphism.
\end{proof}


Suppose there is a pair of dominant weights $(\mu, \lambda)$ such that $\mu \in \chi, \lambda\in \chi'$ and $\mu-\lambda \in \Lambda$. We recall the translation bimodules $P^{\mu, \lambda} \in \HC(U^{\chi, \chi'})$ and $P^{\lambda, \mu}\in \HC(U^{\chi', \chi})$ in Definition \ref{def translation bimod}.

\begin{Lem}\label{lem image of complete Verma} The functor $P^{\mu, \lambda} \otimes_{U^{\chi'}} \bullet: \hat{O}_{\chi'} \rightarrow \hat{O}_{\chi}$ defined by $Y\mapsto P^{\mu, \lambda}\otimes_{U^{\chi'}} Y$ for any $Y$ in $\hat{O}_{\chi'}$ is isomorphic to the translation functor $\CT_{\lambda\rightarrow \mu}$. As a consequence, if $W_\lambda\subset W_\mu$, then $P^{\mu, \lambda}\otimes_{U^{\chi'}} \hat{\Delta}(w\cdot \lambda)\cong \hat{\Delta}(w\cdot \mu)$ for any $w\in W_{[\lambda]}$.
\end{Lem}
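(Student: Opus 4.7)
The plan is to trace through the definition $P^{\mu,\lambda} = (L(\mu-\lambda)\otimes U(\g))\otimes_{\CZ}\CZ^{\chi,\chi'}$ and reduce everything to how the completion at $(\chi,\chi')$ interacts with tensor products with objects already having central character at $\chi'$.

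First, I would prove the more general statement: for any finite dimensional $\g$-representation $V$ and any $Y\in\hat O_{\chi'}$, there is a natural isomorphism
\[ P^{\chi,\chi'}_V\otimes_{U^{\chi'}}Y\;\cong\;\pr_\chi(V\otimes Y). \]
Start with the standard $(U,U)$-bimodule identification $(V\otimes U)\otimes_U Y\cong V\otimes Y$, where on the right hand side the $U$-action is the diagonal one; this is precisely the $V\otimes -$ that enters the definition of $\pr_\chi(V\otimes -)$. Next, by Remark \ref{rem support of HCbim} the $\CZ$-support of $V\otimes U$ is finite over $\Spec Z$ on either side, so the completion of $V\otimes U$ at $(\chi,\chi')$ is just the direct summand picked out by the left and right central characters: $(V\otimes U)\otimes_{Z}Z^{\wedge_\chi}=\bigoplus_{\chi''}P^{\chi,\chi''}_V$, with the sum being finite.

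Now tensor both sides with $Y$ over $U$. Since $Y$ is annihilated by powers of $\m_{\chi'}$ under the $Z$-action, and the right $U$-action on the summand $P^{\chi,\chi''}_V$ factors through $U^{\chi''}$, only the summand with $\chi''=\chi'$ survives. On the other hand, tensoring $V\otimes Y$ with $Z^{\wedge_\chi}$ over $Z$ (acting via the diagonal $\g$-action, which is the same as the left $U$-action on $V\otimes U$) picks out exactly the summand of $V\otimes Y$ whose central character lies at $\chi$, i.e.\ $\pr_\chi(V\otimes Y)$. Also $P^{\chi,\chi'}_V\otimes_U Y=P^{\chi,\chi'}_V\otimes_{U^{\chi'}}Y$ because the right action already factors. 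Comparing both sides gives the claimed natural isomorphism.

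Specializing $V=L(\mu-\lambda)$ yields $P^{\mu,\lambda}\otimes_{U^{\chi'}}Y\cong\pr_\chi\bigl(L(\mu-\lambda)\otimes Y\bigr)=\CT_{\lambda\to\mu}(Y)$, which is the first claim. The consequence about Verma modules is then immediate from the previously stated lemma $\CT_{\lambda\to\mu}(\hat\Delta(w\cdot\lambda))\cong\hat\Delta(w\cdot\mu)$ under the hypothesis $W_\lambda\subset W_\mu$. The only real subtlety I anticipate is bookkeeping the two $Z$-actions, making sure that the left $Z$-action on $V\otimes U$ (which is the diagonal $Z$-action on $V\otimes Y$ after tensoring with $Y$) is correctly identified with $\pr_\chi$, and that the right $Z$-action matching the central character of $Y$ forces the single-summand reduction; once this bookkeeping is in place, the rest is formal.
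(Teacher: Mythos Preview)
Your overall strategy is exactly what the paper has in mind: the paper's own proof is just ``This is obvious,'' and the general identity $P^{\chi,\chi'}_V\otimes_{U^{\chi'}}Y\cong\pr_\chi(V\otimes Y)$ is asserted verbatim (without justification) in the proof of Lemma~\ref{lem HCproj to Oproj}. So there is nothing to compare at the level of approach.

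However, your implementation has a genuine gap. The claim ``$Y$ is annihilated by powers of $\m_{\chi'}$ under the $Z$-action'' is \emph{false} for objects of the deformed category $\hat O_{\chi'}$: for instance the weight space $\hat\Delta(\lambda)_\lambda\cong R$ is a faithful $Z^{\wedge_{\chi'}}$-module, not $\m_{\chi'}$-torsion. What is true is only that the $Z$-action \emph{extends} to $Z^{\wedge_{\chi'}}$ (Lemma 4.5 in the paper), and this weaker statement does not let you kill the unwanted summands in the way you describe. Concretely, for distinct closed points one does \emph{not} have $Z^{\wedge_{\chi''}}\otimes_Z Z^{\wedge_{\chi'}}=0$ (think of $\Z_p\otimes_\Z\Z_q$), so neither ``only $\chi''=\chi'$ survives'' nor ``$Z^{\wedge_\chi}\otimes_Z(V\otimes Y)=\pr_\chi(V\otimes Y)$'' follows. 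For the same reason your last reduction $P^{\chi,\chi'}_V\otimes_U Y=P^{\chi,\chi'}_V\otimes_{U^{\chi'}}Y$ is unjustified: it would require $U^{\chi'}\otimes_U Y\cong Y$, i.e.\ $Z^{\wedge_{\chi'}}\otimes_Z Y\cong Y$, which again fails.

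The fix is painless: complete on the \emph{right} instead of the left. The paper records the decomposition $V\otimes U^{\chi'}=(V\otimes U)\otimes_Z Z^{\wedge_{\chi'}}\cong\bigoplus_\chi P^{\chi,\chi'}_V$ of $(U,U^{\chi'})$-bimodules. Tensoring over $U^{\chi'}$ with $Y$ (which is legitimately a $U^{\chi'}$-module) gives
\[
V\otimes Y\;\cong\;(V\otimes U^{\chi'})\otimes_{U^{\chi'}}Y\;\cong\;\bigoplus_\chi\bigl(P^{\chi,\chi'}_V\otimes_{U^{\chi'}}Y\bigr),
\]
and since $P^{\chi,\chi'}_V\otimes_{U^{\chi'}}Y\in\hat O_\chi$ by the discussion preceding Lemma~\ref{lem HCproj to Oproj}, this decomposition coincides with the block decomposition of $V\otimes Y$, yielding $P^{\chi,\chi'}_V\otimes_{U^{\chi'}}Y\cong\pr_\chi(V\otimes Y)$. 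Specializing to $V=L(\mu-\lambda)$ and invoking the deformed translation lemma then finishes the proof as you wrote.
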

\begin{proof}This is obvious.
\end{proof}
 Let assume $W_\lambda\subset W_\mu$, then we have an inclusion:
 \Eq{\label{Z hom}Z^{\wedge_\chi}\xrightarrow[]{\epsilon_\mu} R^{W_\mu} \hookrightarrow R^{W_\lambda} \xrightarrow[]{\epsilon^{-1}_\lambda} Z^{\wedge_{\chi'}}.}
Via \eqref{Z hom}, we have a right $Z^{\wedge_\chi}$-action on $P^{\mu, \lambda}$ by the restriction of the right $Z^{\wedge_{\chi'}}$-action. Similarly, we have the left $Z^{\wedge_\chi}$-action on $P^{\lambda, \mu}$ by the restriction of the left $Z^{\wedge_{\chi'}}$-action. 

\begin{Prop} \label{prop actions on tranbimod}
Suppose that $ W_\lambda\subset W_\mu$. Then the left and right  $Z^{\wedge_\chi}$-actions on $P^{\mu, \lambda}$ coincide.  Similarly, the left and right $Z^{\wedge_\chi}$-actions on $P^{\lambda, \mu}$ coincide. 
\end{Prop}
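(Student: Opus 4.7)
The plan is to identify both $Z^{\wedge_\chi}$-actions on $P^{\mu, \lambda}$ with a single element of $R$ by tensoring with a deformed Verma and invoking Proposition \ref{prop ff on HCproj}. Since $P^{\mu, \lambda} = P^{\chi, \chi'}_{L(\mu-\lambda)}$ is projective in $\HC(U^{\chi, \chi'})$ by Corollary \ref{prop of HCchi}, Proposition \ref{prop ff on HCproj} applies with $P_1 = P_2 = P^{\mu, \lambda}$ and the chosen dominant weight $\lambda \in \chi'$. Combined with Lemma \ref{lem image of complete Verma} (whose hypothesis $W_\lambda \subset W_\mu$ is precisely the one assumed), this produces a natural isomorphism
\[
\End_{\HC(U^{\chi, \chi'})}(P^{\mu, \lambda}) \otimes_{Z^{\wedge_{\chi'}}} R \xrightarrow{\cong} \End_{\hat{O}}(\hat{\Delta}(\mu)) \cong R.
\]

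The main technical step, and the place where the argument is not purely formal, is to show that the canonical map $\phi \mapsto \phi \otimes 1$ from $\End_{\HC(U^{\chi, \chi'})}(P^{\mu, \lambda})$ into $R$ is injective. The inclusion $Z^{\wedge_{\chi'}} \cong R^{W_\lambda} \hookrightarrow R$ splits as a map of $R^{W_\lambda}$-modules via the averaging retraction $r \mapsto |W_\lambda|^{-1} \sum_{w \in W_\lambda} w(r)$, which is valid in characteristic zero. Consequently $(-) \otimes_{R^{W_\lambda}} R$ is split on every $R^{W_\lambda}$-module and $M \to M \otimes_{R^{W_\lambda}} R$ is injective for every such $M$, in particular for $M = \End_{\HC(U^{\chi, \chi'})}(P^{\mu, \lambda})$.

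It then suffices to show that, for each $z \in Z^{\wedge_\chi}$, the images in $R$ of the endomorphisms $L(z), R(z) \in \End_{\HC(U^{\chi, \chi'})}(P^{\mu, \lambda})$ given by left and right multiplication by $z$ agree. The image of $L(z)$ is left multiplication by $z$ on $\hat{\Delta}(\mu) \cong P^{\mu, \lambda} \otimes_{U^{\chi'}} \hat{\Delta}(\lambda)$; by the explicit formula $zm = \epsilon_\mu(z)m$ for the $Z^{\wedge_\chi}$-action on $\hat{\Delta}(\mu)$ (established in Section \ref{ssec: Deformed Cat O}), this is multiplication by $\epsilon_\mu(z) \in R$. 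For $R(z)$, definition \eqref{eq Z inclusion} identifies right multiplication by $z$ on $P^{\mu, \lambda}$ with right multiplication by $\iota(z) \in Z^{\wedge_{\chi'}}$, where $\iota := \epsilon_\lambda^{-1} \circ (R^{W_\mu} \hookrightarrow R^{W_\lambda}) \circ \epsilon_\mu$. Since the tensor product is over $U^{\chi'}$ and $\iota(z)$ is central, this right action on $P^{\mu, \lambda}$ becomes left multiplication by $\iota(z)$ on $\hat{\Delta}(\lambda)$, that is, multiplication by $\epsilon_\lambda(\iota(z)) = \epsilon_\mu(z)$. Hence both images equal $\epsilon_\mu(z)$, and injectivity forces $L(z) = R(z)$.

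The statement for $P^{\lambda, \mu} \in \HC(U^{\chi', \chi})$ is obtained by the mirror argument in the category $\hat{O}^r$ of right deformed modules (Remark \ref{rem3}): under the same hypothesis $W_\lambda \subset W_\mu$, the right-module analogue of Lemma \ref{lem image of complete Verma} yields $\hat{\Delta}^r(\lambda) \otimes_{U^{\chi'}} P^{\lambda, \mu} \cong \hat{\Delta}^r(\mu)$, and the averaging splitting together with $\epsilon_\lambda \circ \iota = \epsilon_\mu$ concludes as above. The only bookkeeping subtlety throughout is to track which side of the tensor product each central element acts on after being absorbed across $\otimes_{U^{\chi'}}$.
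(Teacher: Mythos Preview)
Your proof is correct and follows essentially the same route as the paper: apply Proposition~\ref{prop ff on HCproj} and Lemma~\ref{lem image of complete Verma} to embed $\End_{\HC(U^{\chi,\chi'})}(P^{\mu,\lambda})$ into $\End_{\hat O}(\hat\Delta(\mu))\cong R$, then check that the left and right $Z^{\wedge_\chi}$-actions both land on $\epsilon_\mu(z)$. Your averaging-retraction argument for injectivity of $M\to M\otimes_{R^{W_\lambda}}R$ is a nice explicit justification of a step the paper leaves implicit when it asserts the inclusion~\eqref{eq4}.
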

\begin{proof}
Let us prove the statement for $P^{\mu, \lambda}$. By Lemma \ref{lem image of complete Verma}, we have the isomorphism $P^{\mu, \lambda}\otimes_{U^{\chi'}} \hat{\Delta}(\lambda) \cong \hat{\Delta}(\mu)$. So by  Proposition \ref{prop ff on HCproj}, we have an inclusion
\Eq{\label{eq4} \text{End}_{\HC(U^{\chi, \chi'})}(P^{\mu, \lambda})\hookrightarrow \End_{\hat{O}}(\hat{\Delta}(\mu)).}
The left and right actions of $Z^{\wedge_\chi}$ on $P^{\mu,\lambda}$ induce the same actions on  $\hat{\Delta}(\mu)$, which coincide with the left action of $Z^{\wedge_\chi}$ on $\hat{\Delta}(\mu)$. The inclusion $(\ref{eq4})$ then implies that the left and right actions of $Z^{\wedge_\chi}$ on $P^{\mu, \lambda}$ coincide. The proof of the statement for  $P^{\lambda, \mu}$ is similar if we  consider the category $\hat{O}^r$ in the place of the category $\hat{O}$.
\end{proof}

\begin{Prop} \label{prop number of complete HCbim}Suppose there is a pair of dominant weights $(\mu, \lambda)$ such that $\mu\in \chi, \lambda\in \chi'$ and $\mu-\lambda \in \Lambda$.

a) There is one to one correspondence between the indecomposable projective objects in $\HC(U^{\chi, \chi'})$ and the elements of the set $\Xi_{\chi, \chi'}$. In particular, the number of indecomposable projective objects in $\HC(U^{\chi, \chi'})$ is equal to the number of double cosets $W_\mu\backslash \widehat{W}_{[\lambda]}/W_\lambda$.

b) Define the  functor $T_\lambda: \HC(U^{\chi, \chi'})\rightarrow \hat{O}_{\chi}$ by $T_\lambda(M)=M\otimes_{U^{\chi'}} \hat{\Delta}(\lambda)$ for any $M$ in $\HC(U^{\chi, \chi'})$. If $\lambda$ is regular,  then $T_\lambda$ defines an equivalence of categories $\HC(U^{\chi, \chi'})\cong \oplus_{\mu'}\hat{O}_{\mu'}$, where $\mu'$ runs over all dominant weights in the orbit $\widehat{W}_{[\lambda]}\cdot \mu =W\cdot \mu \cap \lambda+\Lambda$.

\end{Prop}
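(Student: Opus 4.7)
For part (a), I would detect indecomposable projectives by lifting idempotents from a residue algebra. Choose a finite-dimensional $\g$-representation $V$ large enough that every indecomposable projective in $\HC(U^{\chi,\chi'})$ appears as a direct summand of $P^{\chi,\chi'}_V$ (possible by Corollary \ref{prop of HCchi}(c)). The ring $E := \End_{\HC(U^{\chi,\chi'})}(P^{\chi,\chi'}_V) \cong \Hom_\g(V, P^{\chi,\chi'}_V)$ is finitely generated over the complete Noetherian local ring $Z^{\wedge_{\chi'}}$ by Corollary \ref{prop of HCchi}(d). A module-finite algebra over a complete local ring is semi-perfect, so primitive idempotents in $E$ biject with those in $E/E\m_{\chi'} \cong \End_{\HC(U)}(\uP_V)$, where $\uP_V := P^{\chi,\chi'}_V/P^{\chi,\chi'}_V\m_{\chi'}$ is a projective in ${}_\chi\HC^1_{\chi'}$. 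Varying $V$ and applying Proposition \ref{prop indecom HCbim}, the indecomposable summands biject with $\Xi_{\chi,\chi'}$. The count $|W_\mu\backslash \widehat{W}_{[\lambda]}/W_\lambda|$ then follows from the counting lemma preceding Theorem \ref{Thm BGG equivalence}.

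For part (b), when $\lambda$ is regular one has $W_\lambda = \{1\}$ and $\epsilon_\lambda : Z^{\wedge_{\chi'}} \xrightarrow{\cong} R$, so Proposition \ref{prop ff on HCproj} specializes to the statement
\[
\Hom_{\HC(U^{\chi,\chi'})}(P_1, P_2) \xrightarrow{\cong} \Hom_{\hat{O}}(T_\lambda P_1, T_\lambda P_2)
\]
for any projectives $P_1, P_2$. Together with Lemma \ref{lem HCproj to Oproj}, this shows $T_\lambda$ is fully faithful on the additive subcategory of projectives and lands in the projectives of $\hat{O}_\chi$. A weight check on the generator $V \otimes \hat{\Delta}(\lambda)$ confines the image to $\bigoplus_{\mu'} \hat{O}_{\mu'}$, where $\mu'$ ranges over dominant weights in $\widehat{W}_{[\lambda]}\cdot\mu = W\cdot\mu\cap(\lambda+\Lambda)$.

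Since a fully faithful functor preserves indecomposability, $T_\lambda$ injects isoclasses of indecomposable projectives. Part (a) furnishes $|\widehat{W}_{[\lambda]}|/|W_\mu|$ such classes on the source; on the target, each $\hat{O}_{\mu'}$ contains $|W_{[\lambda]}|/|W_{\mu'}|$ indecomposable projectives, and summing over $W_{[\lambda]}$-orbits in $\widehat{W}_{[\lambda]}\cdot\mu \cong \widehat{W}_{[\lambda]}/W_\mu$ recovers the same total by orbit-stabilizer. Hence $T_\lambda$ is essentially surjective on projectives, and therefore an equivalence on the projective subcategories. Since $T_\lambda$ is right exact and both source and target have enough projectives (Corollary \ref{prop of HCchi}(c) and Lemma \ref{prop finite homological O}), one extends to an equivalence of abelian categories in the standard way: lift a two-term projective presentation on the target to the source via full faithfulness, take cokernels for essential surjectivity, and compare $\Hom$ groups through their presentations for full faithfulness.

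The principal obstacles are the semi-perfectness/idempotent-lifting argument in part (a), which relies on the algebra $E$ being module-finite over the complete local ring $Z^{\wedge_{\chi'}}$, and the combinatorial orbit-count matching in part (b). Both are technically routine once one invokes Proposition \ref{prop ff on HCproj} and the deformed category $\hat{O}$ machinery already in place.
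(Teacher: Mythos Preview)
Your proposal is correct and follows essentially the same approach as the paper. For part (a) the paper's argument is identical to yours: compute $\End_{\HC(U^{\chi,\chi'})}(P^{\chi,\chi'}_V)$, observe it is module-finite (indeed free of finite rank) over the complete local ring $Z^{\wedge_{\chi'}}$, lift idempotents from the quotient $\End_{_\chi\HC^1_{\chi'}}(P^{\chi,\chi'}_V/P^{\chi,\chi'}_V\m_{\chi'})$, and invoke Proposition~\ref{prop indecom HCbim}. For part (b) the paper is more terse, citing Proposition~\ref{prop ff on HCproj} together with the categorical result \cite[Proposition~5.10]{BG80}; your version unpacks that citation by doing the essential-surjectivity count on indecomposable projectives and then extending via projective presentations, which amounts to the same thing.
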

\begin{proof}

a) Let $V$ be a finite dimensional $\g$-representation. Then $\text{End}_{\HC(U^{\chi, \chi'})}(P^{\chi,\chi'}_V)$ is an algebra which is a free module of finite rank over $Z^{\wedge_{\chi'}}$. Hence, $\text{End}_{\HC(U^{\chi, \chi'})}(P^{\chi, \chi'}_V)$ is complete in the $\m_{\chi'}$-adic topology. On the other hand,
    \[ \End_{\HC(U^{\chi, \chi'})}(P^{\chi, \chi'}_V)/\m_{\chi'} \cong \End_{_\chi\HC_{\chi'}^1}(P^{\chi, \chi'}_V/P^{\chi, \chi'}_V\m_{\chi'}).\]
    Therefore, by the standard argument of lifting idempotents,  we see that there is a one to one correspondence between the direct summands of $P^{\chi, \chi'}_V$ in the category $\HC(U^{\chi, \chi'})$ and the direct summands of $P^{\chi, \chi'}_V/P^{\chi, \chi'}_V \m_{\chi'}$  in the category $_\chi\HC_{\chi'}^1$. On the other hand, any object in $\HC(U^{\chi, \chi'})$ can be covered by some object of the form $P^{\chi, \chi'}_V$, while any object in $_\chi\HC_{\chi'}^1$ can be covered by some object of the form $P^{\chi, \chi'}_V/P^{\chi, \chi'}_V \m_{\chi'}$. Therefore, part $a)$ follows by  Proposition \ref{prop indecom HCbim}.

b) This part follows by Proposition \ref{prop ff on HCproj} and the categorical result in \cite[Proposition $5.10$]{BG80}.
\end{proof}

\section{The functor $\bullet_\dag$}\label{sec: restriction functor}
\subsection{The functor $\bullet_\dag: \HC(U)\rightarrow Z\bim^{C(G)}$} \label{ssec: restriction functor}\

Let $\CN, \BO_{\reg}$ denote the nilpotent cone and the regular nilpotent orbit in $\g$, respectively. Let $\partial \CN$ denote the complement of $\BO_{\reg}$ in $\CN$. Under the Killing form on $\g$, we can identify $\g$ with $\g^*$ so that we can view $\CN, \BO_{\reg}$ and $\partial \CN$  as subvarieties of $\g^*$. Let $e$ be a nilpotent element in $\g$. Let $\xi$ denote the image of the regular nilpotent element $e$ in $\g^*$ under this identification. 

Let us recall the PBW filtration $\{F_iU\}_{i\geq0}$ of $U(\g)$. Recall that a filtration $\{F_iM\}_{i\geq 0}$ of an object $M$ in $\HC(U)$ is good if $[F_iU, F_jM]\subset F_{i+j-1}M$ for any $i,j \geq 0$, and  the associated graded object $\gr _F M$ is a finitely generated module over $\C[\g^*]$. The \emph{associated variety} $V(M)$ of $M$ is the support of $\gr _F M$ in $\g^*$ for some good filtration $F$ of $M$. This associated variety does not depend on the choice of a good filtration on $M$.  Let $\HC_\CN(U)$ denote the full subcategory of the category $\HC(U)$ consisting of all objects $M$ such that $V(M)=\CN$. The definition of $\HC_{\partial \CN}(U)$ is similar. The multiplicity of $\gr _F M$ on the nilpotent cone $\CN$ for some good filtration $F$ is denoted by $\textnormal{mult}_\CN M$. The number $\textnormal{mult}_\CN M$ does not depend on the choice of good filtration on $M$.

Let $Z\bim^{C(G)}$ denote  the category of $\mathfrak{X}$-graded $Z$-bimodules. Let $Z\text{-bimf}^{C(G)}$ denote the full subcategory of the category $Z\bim^{C(G)}$ consisting of all objects which are finite dimensional vector spaces over $\C$. In \cite{IL11}, for the regular nilpotent element $e$, we have the following two functors:
\Eq{\label{Restriction functor} \bullet_\dag: \HC(U) \rightarrow Z\bim^{C(G)},\tab \bullet^\dag: Z\textnormal{-bimf}^{C(G)}\rightarrow \HC_\CN(U);}
here $C(G)$ is the center of the simply connected algebraic group $G$.

Let us briefly describe the construction of $\bullet_\dag$ in \cite{IL11}. We first form an $\mathfrak{sl}_2$-triple $(e,f,h)$. The element $h$ in the $\mathfrak{sl}_2$-triple $(e, h,f)$ defines an $\C^{\x}$-action $\gamma$ on $\g^*$. A new $\C^{\x}$-action is defined  by $t\cdot x=t^{-2}\gamma(t)x$ for any $x$ in $\g^*$, so  this new  $\C^{\x}$-action fixes $\xi$. The Slodowy slice $S$ corresponding to $\xi$ is just the Kostant slice in our case when $e$ is regular.  Let $\CU_\hbar$ denote the Rees algebra of $U(\g)$ for the PBW filtration. We can view $\xi $ as a homomorphism $\CU_\hbar \rightarrow \C$ and form the completion $\CU_\hbar^{\wedge_\xi}$ at the kernel of $\xi$. 

Set $V:=T_\xi \BO$; this is a symplectic space with the Kirillov-Kostant form $\omega$. It is $\C^{\x}$-stable under the new $\C^{\x}$-action. Consider its homogenized Weyl algebra $\CA_\hbar=T(V)[\hbar]/([u,v]-\hbar\omega(u,v))$. Thanks to the decompostion $T_\xi\g^*= V\oplus T_\xi S$, we can view $V$ as a subspace of $\g=(T_\xi \g^*)^*$. Then we can $\C^{\x}$-equivariantly lift the embedding $V\rightarrow \g $ to an embedding $V\rightarrow \CU_\hbar$ so that it lifts to an algebra homomorphism \cite[$\mathsection 2.1$]{IL15}
\[\CA_\hbar^{\wedge_0}\hookrightarrow \CU_\hbar^{\wedge_\xi};\]
here $\bullet^{\wedge_0}$ stands for the completion at the maximal ideal of $0$. Let $\CW'_\hbar$ be the centralizer of the image of $V$ in $\CU_\hbar^{\wedge_\xi}$. Then we have the decomposition \cite[$\mathsection 2.3$]{IL11}
\[ \CU_\hbar^{\wedge_\xi}\cong \CA_\hbar^{\wedge_0}\widehat{\otimes}_{\C[[\hbar]]} \CW'_\hbar,\]
where $\widehat{\otimes}$ is the completed tensor product.

The algebra $\CW'_\hbar$ comes with a $\C^{\x}$-action. Let $\CW_\hbar$ be the locally $\C^{\x}$-finite part of $\CW'_\hbar$ . Then the finite $W$-algebra $\CW$ is defined by $\CW:=\CW_\hbar/(\hbar-1)\CW_\hbar$. There is an embedding $Z \hookrightarrow \CW$ which becomes an isomorphism when $e$ is regular.

Let $M$ be an object in $\HC(U)$. Let $\{F_i M\}_{i \geq 0}$ be a good filtration on $M$ and consider the Rees module $M_\hbar $ for this good filtration. Let $M^{\wedge_\xi}_\hbar $ be the completion at the kernel of $\xi: \CU_\hbar \rightarrow \C$. Then we will have an isomorphism of $\CU_\hbar^{\wedge_\xi}\cong \CA_\hbar^{\wedge_0}\widehat{\otimes}_{\C[[\hbar]]} \CW'_\hbar$-bimodules \cite[Proposition $3.3.1$]{IL11}
\[ M^{\wedge_\xi}_\hbar \cong \CA_\hbar^{\wedge_0}\widehat{\otimes}_{\C[[\hbar]]} \CM'_\hbar.\]
The $\CW'_\hbar$-bimodule $\CM'_\hbar$ comes with a $\C^{\x}$-action and a $C(G)$-action. Note that $C(G)$ acts trivially on $\CA^{\wedge_0}_\hbar$.  Let $\CM_\hbar$ be the locally $\C^{\x}$-finite part of $\CM'_\hbar$.  Then the image of $M$ under the functor $\bullet_\dag$ is defined by:
\[ M_\dag :=\CM_\hbar/(\hbar-1)\CM_\hbar,\]
which is an object in $Z\bim^{C(G)}$. 

The functor $\bullet_\dag$ satisfies the following properties \cite{IL11}:

\begin{Prop}\label{prop property of restriction}
a) $\bullet_\dag$ is exact and  monoidal.

b)  $\bullet^\dag$ is right adjoint to $\bullet_\dag: \HC_\CN(U)\rightarrow Z\textnormal{-bimf}^{C(G)}$.

c) The kernel and cokernel of the natural map $\epsilon: M\rightarrow (M_\dag)^\dag$ lie in $\HC_{\partial\CN(U)}$.

d)  Both functors $\bullet_\dag$ and $\bullet^\dag$ are $Z\otimes Z$-linear; i.e., for $M_1, M_2$ in $\HC(U)$ and $N_1, N_2$ in $Z\textnormal{-bimf}^{C(G)}$, the following maps are $Z\otimes Z$-linear:
\Eqn{ \Hom_{\HC(U)}(M_1, M_2)\rightarrow \Hom_{Z\bim^{C(G)}}(M_{1,\dag}, M_{2, \dag})\\
\Hom_{Z\textnormal{-bimf}^{C(G)}}(N_1,N_2)\rightarrow \Hom_{\HC(U)}(N_1^\dag, N_2^\dag)}

e) If $C(G)$ acts on $M$ via a character $\chi$, then $C(G)$ also acts on $M_\dag$ via the character $\chi$.

\end{Prop}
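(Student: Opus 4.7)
Since Proposition \ref{prop property of restriction} collects results from \cite{IL11}, my plan is to verify each clause by unpacking the Rees-algebra construction of $\bullet_\dag$ just reviewed. For exactness in (a), each of the three operations defining $\bullet_\dag$---passing to the Rees module $M_\hbar$, completing at $\ker(\xi)$, and taking the locally $\C^\times$-finite part---is exact: the first on filtered morphisms with a compatible good filtration, the second because $\CU_\hbar^{\wedge_\xi}$ is flat over $\CU_\hbar$ on the relevant subcategory, and the third because the $\C^\times$-weight decomposition realizes the locally finite part as a direct summand. For the monoidal structure I would exploit the decomposition $\CU_\hbar^{\wedge_\xi}\cong \CA_\hbar^{\wedge_0}\widehat{\otimes}_{\C[[\hbar]]}\CW'_\hbar$: since $\CA_\hbar^{\wedge_0}$ is a completed Weyl algebra whose bimodule tensor product contributes only a Morita-trivial Fock-type factor, tensoring on the Harish-Chandra side reduces cleanly to tensoring the $\CW'_\hbar$-factors, yielding a natural isomorphism $(M\otimes_U N)_\dag\cong M_\dag\otimes_Z N_\dag$ compatible with the associators.

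For (b) I would construct the unit and counit of the adjunction directly from the Rees-algebra formalism. The unit is the natural map $\epsilon$ featured in (c), arising from the composition of inclusion into the completion with passage to locally $\C^\times$-finite sections; the counit is essentially tautological from the construction of $\bullet^\dag$. The triangle identities are then verified by a diagram chase using the decomposition above. Part (c) follows quickly: if $M \in \HC(U)$ has $V(M) \subset \partial \CN$, then $\xi$ lies outside the support of $\gr M$, so $M_\hbar^{\wedge_\xi}=0$ and hence $M_\dag = 0$. Applying this principle to the kernel and cokernel of $\epsilon$, which by (a) and (b) are annihilated by $\bullet_\dag$, forces their associated varieties into $\partial \CN$.

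Parts (d) and (e) are essentially structural. For (d), the $Z\otimes Z$-linearity of $\bullet_\dag$ and $\bullet^\dag$ holds because $Z$ embeds as a central subalgebra of $\CW$ (in fact $Z \cong \CW$ when $e$ is regular) and its Rees analog commutes with both completion at $\ker(\xi)$ and extraction of locally $\C^\times$-finite parts; thus the induced maps on $\Hom$-spaces respect the natural $Z\otimes Z$-module structures. For (e), $C(G)$ acts trivially on $\g$ and therefore trivially on $\CA_\hbar^{\wedge_0}$, so any $C(G)$-character on a Harish-Chandra bimodule $M$ transports intact through the decomposition $M_\hbar^{\wedge_\xi}\cong \CA_\hbar^{\wedge_0}\widehat{\otimes}\CM'_\hbar$ onto $\CM'_\hbar$, and then onto $M_\dag$ after passage to locally $\C^\times$-finite parts and specialization at $\hbar=1$.

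The main obstacle I anticipate is the monoidal structure in (a). Completed tensor products do not in general commute with ordinary tensor products, so verifying $(M\otimes_U N)_\dag\cong M_\dag\otimes_Z N_\dag$ requires a careful analysis of how the Morita-type reduction for the $\CA_\hbar^{\wedge_0}$-factor interacts with the bimodule tensor product and with the locally $\C^\times$-finite part functor. Once that coherence is in place, the remaining associator and unit constraints and the verification of (b)--(e) follow in a relatively formal manner.
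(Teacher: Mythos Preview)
The paper itself gives no proof of this proposition: it is stated immediately after the sentence ``The functor $\bullet_\dag$ satisfies the following properties \cite{IL11}:'' and is simply a summary of results imported from Losev's paper. So there is no argument in the paper to compare your sketch against; your proposal is an attempt to reconstruct what \cite{IL11} does.

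As an outline your sketch is broadly in the right spirit, but there is a genuine logical gap in your treatment of part (c). You correctly observe that $V(M)\subset\partial\CN$ implies $M_\dag=0$ (since $\xi$ then lies outside the support of $\gr M$). You then want to conclude that the kernel and cokernel of $\epsilon:M\to(M_\dag)^\dag$ lie in $\HC_{\partial\CN}(U)$ by showing that they are annihilated by $\bullet_\dag$. There are two problems. First, the adjunction triangle identity gives only that $(\epsilon_M)_\dag$ is a split monomorphism, so exactness yields $(\ker\epsilon)_\dag=0$, but says nothing directly about the cokernel; you would need either a second (left) adjunction or an independent argument. Second, and more seriously, you have only established the implication $V(M)\subset\partial\CN\Rightarrow M_\dag=0$, not its converse. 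To deduce that an object with $M_\dag=0$ has associated variety contained in $\partial\CN$ you need the key quantitative input from \cite{IL11}, namely that $\dim_\C M_\dag=\textnormal{mult}_{\CN}M$ (the multiplicity of $\gr M$ along the open orbit). With this formula in hand, additivity of multiplicity on short exact sequences plus the computation $\textnormal{mult}_\CN((M_\dag)^\dag)=\dim_\C M_\dag$ handles both kernel and cokernel at once. Without it your argument for (c) is circular.

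A smaller caution on (a): the locally $\C^\times$-finite part functor is only left exact in general; its exactness here relies on the specific structure of the $\C^\times$-action on $\CM'_\hbar$ (bounded-below weights with finite multiplicities), which is worth making explicit rather than asserting it is ``a direct summand''.
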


Let $I$ be an ideal in $Z\otimes Z$. Then for any $M$ in $\HC(U)$, both $IM$ and $M/I$ are objects contained in $\HC(U)$.
\begin{Cor}\label{cor: quotient and restriction functor} We have isomorphisms: $(MI)_\dag\cong M_\dag I$ and $(M/I)\dag\cong M_\dag/I$.
\end{Cor}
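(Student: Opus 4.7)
The plan is to deduce both isomorphisms simultaneously from a single short exact sequence. First form the exact sequence $0 \to MI \to M \to M/MI \to 0$ in $\HC(U)$, which makes sense because $U\otimes U^{\text{op}}$ is Noetherian, so the sub-bimodule $MI$ of the finitely generated bimodule $M$ is itself finitely generated, and it inherits local finiteness of the adjoint action from $M$. Applying the exact functor $\bullet_\dag$ (Proposition \ref{prop property of restriction}\,a)) gives a short exact sequence
\[
0 \to (MI)_\dag \to M_\dag \to (M/MI)_\dag \to 0.
\]
If one can identify $(MI)_\dag$ with the sub-bimodule $M_\dag \cdot I \subset M_\dag$, then both claims follow at once: the first by definition, the second by reading off the quotient.

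For the identification $(MI)_\dag = M_\dag I$, I would use that $Z\cong \C[\fh^*]^{(W,\cdot)}$ is a polynomial ring, so $Z\otimes Z$ is Noetherian and $I$ admits a finite generating set $z_1,\dots,z_n$. Define $\phi\colon M^{\oplus n}\to M$ by $(m_1,\dots,m_n)\mapsto \sum_i z_i m_i$; this is a morphism in $\HC(U)$ whose image is exactly $MI$, so it factors as a surjection $M^{\oplus n}\twoheadrightarrow MI$. Applying the exact additive functor $\bullet_\dag$ yields a surjection $\phi_\dag\colon M_\dag^{\oplus n}\twoheadrightarrow (MI)_\dag$. By the $Z\otimes Z$-linearity of Proposition \ref{prop property of restriction}\,d), the endomorphism of $M$ given by multiplication by $z_i$ is sent by $\bullet_\dag$ to multiplication by $z_i$ on $M_\dag$; viewing $\phi$ as the row vector $(z_1,\dots,z_n)$, this means $\phi_\dag$ is given by the same formula, so its image equals $M_\dag\cdot I$. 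Hence $(MI)_\dag = M_\dag I$, and the exact sequence above becomes
\[
0\to M_\dag I\to M_\dag\to (M/MI)_\dag\to 0,
\]
yielding $(M/MI)_\dag\cong M_\dag/M_\dag I$ as desired.

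The only substantive point — really the only place where the argument is not purely formal — is the step where $Z\otimes Z$-linearity of the functor is used to conclude that $\phi_\dag$ is literally $(m_i)\mapsto \sum z_i m_i$ rather than some a priori different $\C$-linear map. This follows because multiplication by $z_i\in Z\otimes Z$ on $M$ is, by definition, the image of $z_i$ under the structure map $Z\otimes Z\to \End_{\HC(U)}(M)$, and $Z\otimes Z$-linearity of the functorial map $\End_{\HC(U)}(M)\to \End_{Z\bim^{C(G)}}(M_\dag)$ (Proposition \ref{prop property of restriction}\,d)) sends this to multiplication by $z_i$ on $M_\dag$ via the analogous structure map on the target. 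No deeper property of the construction of $\bullet_\dag$ is required.
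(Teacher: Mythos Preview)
Your argument is correct and is essentially the intended one: the paper states this corollary without proof, leaving it as an immediate consequence of Proposition~\ref{prop property of restriction} (exactness and $Z\otimes Z$-linearity of $\bullet_\dag$), and your proof spells out precisely how those two properties combine to give the claim. The only small remark is that the identification of $\phi_\dag$ with multiplication by the $z_i$ can be phrased even more directly: functoriality sends $\mathrm{id}_M$ to $\mathrm{id}_{M_\dag}$, and $Z\otimes Z$-linearity of $\End_{\HC(U)}(M)\to\End_{Z\bim^{C(G)}}(M_\dag)$ then forces $z_i\cdot\mathrm{id}_M\mapsto z_i\cdot\mathrm{id}_{M_\dag}$, exactly as you say.
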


We are now interested in the adjunction unit $\epsilon: M\rightarrow (M_\dag)^\dag$ for $M$ in $\HC_\CN(U)$. Let $\iota: \BO_{\reg}\hookrightarrow \CN$ be the inclusion of the regular nilpotent orbit $\BO_{\reg}$ into the nilpotent cone $\CN$. Let $F$ be a good filtration on $M$ such that $\gr_F M$ is a module over $\C[\CN]$. Then there is a suitable filtration on $(M_\dag)^\dag$ such that the associated graded map $\gr \epsilon: \gr  M \rightarrow \gr  (M_\dag)^\dag$ can be viewed as the natural map $\gr  M\rightarrow \iota_*\iota^* \gr M$, see Section $3.2$ and the construction of $\bullet_\dag$ in \cite{IL11}. On the other hand, $\CN$ is normal and the codimension of $\CN\backslash\BO_{\reg}$ in $\CN$ is $2$. Therefore, if $\gr M$ is a free $\C[\CN]$-module then the natural map $\gr M\rightarrow \iota_*\iota^*\gr M$ is an isomorphism and the map $\epsilon: M\rightarrow (M_\dag)^\dag$ is also an isomorphism. The last observation is crucial for the following lemma. 

\begin{Lem}\label{lem ff of restriction} Let $V$ be a finite dimensional $\g$-representation. Let $\m_\chi$ be a maximal ideal of $Z$. The natural map $V\otimes U(\g)/\m_\chi^k \rightarrow ((V\otimes U(\g)/\m_\chi^k)_\dag)^\dag$ is an isomorphism for any $k$.
\end{Lem}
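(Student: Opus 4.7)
The plan is to induct on $k$, using the observation made just before the lemma: if $M \in \HC_\CN(U)$ admits a good filtration with $\gr M$ free over $\C[\CN]$, then $\epsilon_M : M \to (M_\dag)^\dag$ is an isomorphism.

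For the base case $k = 1$, I would equip $M := V \otimes U(\g)/\m_\chi$ with the good filtration induced by the PBW filtration on $U(\g)$ (tensored with the trivial filtration on $V$). The associated graded of the maximal ideal $\m_\chi \subset Z$ under PBW is the augmentation ideal $\C[\g^*]^G_+ \subset \C[\g^*]^G = \gr Z$, so Kostant's theorem gives $\gr(U(\g)/\m_\chi) = \C[\g^*]/(\C[\g^*]^G_+ \cdot \C[\g^*]) = \C[\CN]$. Hence $\gr M \cong V \otimes \C[\CN]$ is a free $\C[\CN]$-module and the observation applies.

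For the inductive step, I would use the short exact sequence
\[ 0 \to V \otimes (\m_\chi^{k-1}U/\m_\chi^k U) \to V \otimes U(\g)/\m_\chi^k \to V \otimes U(\g)/\m_\chi^{k-1} \to 0. \]
By Kostant's freeness theorem $U(\g)$ is free over $Z$, so
\[ \m_\chi^{k-1}U/\m_\chi^k U \cong (\m_\chi^{k-1}/\m_\chi^k) \otimes_{Z/\m_\chi} (U(\g)/\m_\chi U) \cong (U(\g)/\m_\chi U)^{\oplus d}, \]
where $d := \dim_\C(\m_\chi^{k-1}/\m_\chi^k)$. Thus the leftmost term is a direct sum of copies of $V \otimes U(\g)/\m_\chi$, on which $\epsilon$ is an isomorphism by the base case, while $\epsilon$ on the rightmost term is an isomorphism by the induction hypothesis.

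To close the induction I would apply $\bullet_\dag$ (exact by Proposition \ref{prop property of restriction}(a)) and then $\bullet^\dag$ (left exact as a right adjoint, by (b)) to the sequence and compare with the original via naturality of $\epsilon$. The main subtlety — and what I expect to be the only nontrivial step — is that $\bullet^\dag$ is only left exact, so the bottom row of the resulting commutative diagram has the form $0 \to N' \to N \to N''$ without a trailing zero. A direct diagram chase still succeeds: injectivity of the middle vertical map follows by the usual four-lemma argument, while for surjectivity one uses that $N' = \ker(N \to N'')$ by left exactness in order to lift any $n \in N$ to some $m \in M$ modulo $M'$, then corrects the difference using the isomorphism $\epsilon_{M'}$ on $M'$. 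All other ingredients are routine.
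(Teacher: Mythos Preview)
Your proof is correct and follows essentially the same route as the paper: induction on $k$ with the $k=1$ case handled via the PBW filtration giving $\gr M \cong V \otimes \C[\CN]$ free, and the inductive step via the short exact sequence together with freeness of $U(\g)$ over $Z$ and left exactness of $(\bullet_\dag)^\dag$. The diagram chase you describe is precisely what is needed and is left implicit in the paper.
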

\begin{proof}To simplify the notation in the proof, we set $\mathcal{V}_k:=V\otimes U(\g)/\m_\chi^k$. We first prove the lemma for the case $k=1$. The PBW filtration on $U(\g)$ gives a filtration on $\mathcal{V}_1$ so that $\gr  \mathcal{V}_1$ is isomorphic to $\C[\CN]\otimes V$ as $\C[\CN]$-modules. Therefore, by the observation in the preceding paragraph, we see that the natural map $\mathcal{V}_1\rightarrow (\mathcal{V}_{1,\dag})^\dag$ is an isomorphism.

We now prove the lemma by induction. Let assume that $\mathcal{V}\rightarrow (\mathcal{V}_{k, \dag})^\dag$ is an isomorphism. Since $U(\g)$ is free over $Z$, the Harish-Chandra bimodule $V\otimes (U(\g)\m_\chi^k/U(\g)\m_\chi^{k+1})$ is isomorphic to the direct sum of finitely many copies of $V\otimes U(\g)/\m_\chi$. Therefore the map $V\otimes U(\g)\m_\chi^k/\m_\chi^{k+1}\rightarrow ((V\otimes U(\g)\m_\chi^k/\m_\chi^{k+1})_\dag)^\dag$ is an isomorphism. Let us consider the following diagram
\[\begin{tikzcd} 0\arrow[r]& V\otimes U(\g)\m_\chi^k/\m_\chi^{k+1}\arrow[r]\arrow[d]&\mathcal{V}_{k+1}\arrow[r]\arrow[d]&\mathcal{V}_k \arrow[r]\arrow[d]&0\\
0\arrow[r]&((V\otimes U(\g)\m_\chi^k/\m_\chi^{k+1})_\dag)^\dag \arrow[r]& (\mathcal{V}_{k+1, \dag})^\dag \arrow[r]& (\mathcal{V}_{k, \dag})^\dag& \end{tikzcd}\]

The left and right vertical maps are isomorphisms. Moreover, the functor $(\bullet_\dag)^\dag$ is left exact. Therefore, the middle vertical map is also an isomorphism.
\end{proof}

\begin{Cor}\label{cor: ff of restriction} Let $P$ be a direct summand of the object of the form $V\otimes U(\g)/\m_\chi^k$ as in \textnormal{Lemma} \ref{lem ff of restriction}. Then for any $M$ in $\HC_\CN(U)$, the following map is bijective
\[\Hom(M,P)\rightarrow \Hom(M_\dag, P_\dag).\]
\end{Cor}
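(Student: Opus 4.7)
The plan is to reduce the corollary to Lemma \ref{lem ff of restriction} via the adjunction between $\bullet_\dag$ and $\bullet^\dag$ on $\HC_\CN(U)$. Set $N := V\otimes U(\g)/\m_\chi^k$ and write $N = P\oplus P'$ in $\HC(U)$. Since $\bullet_\dag$ is additive and $\Hom$ respects biproducts, both sides of the map in question split compatibly under this decomposition, so it is enough to prove bijectivity of
\[
\Hom_{\HC(U)}(M,N)\to \Hom_{Z\bim^{C(G)}}(M_\dag,N_\dag), \qquad f\mapsto f_\dag.
\]

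First I would verify that $N$ lies in $\HC_\CN(U)$ by a short PBW computation: the associated graded of $U(\g)/\m_\chi^k$ is a quotient of $\C[\g^*]$ by an ideal whose zero locus is $\CN$, using Kostant's identification $\gr Z \cong \C[\g^*]^G$ together with the fact that the leading terms of elements of $\m_\chi^k$ generate a power of the augmentation ideal of $\C[\g^*]^G$. Tensoring with $V$ preserves associated varieties, so $V(N)=\CN$ and $N\in\HC_\CN(U)$. With both $M$ and $N$ in $\HC_\CN(U)$, Proposition \ref{prop property of restriction}(b) supplies a natural isomorphism
\[
\Hom_{Z\bim^{C(G)}}(M_\dag,N_\dag)\cong \Hom_{\HC_\CN(U)}(M,(N_\dag)^\dag).
\]

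Under this isomorphism, the map $f\mapsto f_\dag$ corresponds to composition with the adjunction unit $\epsilon_N\colon N\to (N_\dag)^\dag$, by naturality of $\epsilon$. Lemma \ref{lem ff of restriction} says exactly that $\epsilon_N$ is an isomorphism, so composition with it is a bijection, completing the proof. I do not anticipate any genuine obstacle here: the hard work has been done in Lemma \ref{lem ff of restriction}, and the remaining statement is a formal consequence of the adjunction and the additivity of $\bullet_\dag$, with the direct-summand reduction being immediate from naturality.
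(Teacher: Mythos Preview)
Your proof is correct and follows exactly the route the paper intends: the corollary is stated without proof because it is immediate from Lemma \ref{lem ff of restriction} together with the adjunction in Proposition \ref{prop property of restriction}(b), and your argument spells this out, including the (routine) verification that $N\in\HC_\CN(U)$ and the direct-summand reduction.
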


\subsection{The functor $\bullet_\dag: \HC(U^{\chi, \chi'})\rightarrow (Z^{\wedge_\chi}, Z^{\wedge_{\chi'}})\bim^{C(G)}$.}\label{ssec: complete-restriction-functor}\

Let $(Z^{\wedge_\chi}, Z^{\wedge_{\chi'}})\bim^{C(G)}$ denote the category of $\mathfrak{X}$-graded $(Z^{\wedge_\chi}, Z^{\wedge_{\chi'}})$-bimodules which are finitely generated as left $Z^{\wedge_\chi}$-modules and finitely generated as right $Z^{\wedge_{\chi'}}$-modules; here $\mathfrak{X}$ is the set of characters of $C(G)$. We now construct the following functor based on \eqref{Restriction functor}:
\Eq{\label{complete restriction}\bullet_\dag: \HC(U^{\chi, \chi'})\rightarrow (Z^{\wedge_\chi}, Z^{\wedge_{\chi'}})\bim^{C(G)}.}

Let $M$ be an object in $\HC(U^{\chi, \chi'})$. Then $M/M\m_{\chi'}^k$ is an object  in $\HC_\CN(U)$ so that $(M/M\m_{\chi'}^k)_\dag$ is contained in $Z\bim^{C(G)}$. We define:
\[ M_\dag:=\varprojlim (M/M \m_{\chi'}^k)_\dag.\]
For any $k$, the object $M/M \m_{\chi'}^k$ is killed by $\m_{\chi}^l$ on the left for some $l>0$. Therefore, $M_\dag$ carries a natural $(Z^{\wedge_\chi}, Z^{\wedge_{\chi'}})$-bimodule structure. The $C(G)$-action on $M$ natually induces an $C(G)$-action on $M_\dag$, which gives a $\mathfrak{X}$-grading on $M_\dag$. So $M_\dag$ is an object in $(Z^{\wedge_\chi}, Z^{\wedge_{\chi'}})\bim^{C(G)}$. Furtheremore, we will get the naturally isomorphic bimodule if we define:
\[ M_\dag:=\varprojlim (M/\m_{\chi}^kM)_\dag.\]
At the moment, we only have that  $M_\dag$ is a $\mathfrak{X}$-graded $(Z^{\wedge_\chi}, Z^{\wedge_{\chi'}})$-bimodule, but we will prove later in Proposition \ref{prop property of complete restriction} that $M_\dag$ is finitely generated as a left $Z^{\wedge_\chi}$-module and finitely generated as a right $Z^{\wedge_{\chi'}}$-module. 

 Now we go through  some properties of the functor $\bullet_\dag$ in \eqref{complete restriction}.

\begin{Prop}\label{prop property of complete restriction} 
a) The functor $\bullet_\dag$   is exact. 

b) The functor $\bullet_\dag$  is $Z^{\wedge_{\chi'}}$-linear and $Z^{\wedge_\chi}$-linear.

c) For any ideal $I$ in $Z^{\wedge_\chi}\otimes Z^{\wedge_{\chi'}}$, we have $(M/I)_\dag\cong  M_\dag/I$ and $ (MI)_\dag\cong M_\dag I$.

d) Recall that for any $M$ in $\HC(U)$ the object $M\otimes_\CZ \CZ^{\chi, \chi'}$ is in $\HC(U^{\chi, \chi'})$. Then $(M\otimes_\CZ \CZ^{\chi, \chi'})_\dag \cong M_\dag\otimes_\CZ\CZ^{\chi, \chi'}$.

e) For any $M$ in $\HC(U^{\chi, \chi'})$, the image $M_\dag$ is finitely generated as a left $Z^{\wedge_\chi}$-module and as a right $Z^{\wedge_{\chi'}}$-module.

f) Let $M$ be in $\HC(U^{\chi, \chi'})$. If $C(G)$ acts on $M$ via a character $\chi$ then $C(G)$ also acts on $M_\dag$ via the character $\chi$.
\end{Prop}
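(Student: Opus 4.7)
My strategy is to reduce every assertion to the known properties of the non-completed functor $\bullet_\dag:\HC(U)\to Z\bim^{C(G)}$ recorded in Proposition~\ref{prop property of restriction} and then pass to the inverse limit in a controlled way. For (a), given a short exact sequence $0\to M'\to M\to M''\to 0$ in $\HC(U^{\chi,\chi'})$, I would invoke the Artin--Rees lemma on the right $Z^{\wedge_{\chi'}}$-module $M$ (finitely generated by Proposition~\ref{prop: equivalent axioms}) to conclude that the induced filtrations $\{M'\cap M\m_{\chi'}^k\}_k$ and $\{M'\m_{\chi'}^k\}_k$ are topologically equivalent. The finite-level functor $N\mapsto (N/N\m_{\chi'}^k)_\dag$ is exact by Proposition~\ref{prop property of restriction}(a), and the resulting inverse system satisfies Mittag--Leffler, so $\varprojlim$ preserves exactness. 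Properties (b) and (f) are inherited from the $\CZ$-linearity and the $C(G)$-equivariance of the original $\bullet_\dag$ in Proposition~\ref{prop property of restriction}(d),(e), since the $Z^{\wedge_\chi}$- and $Z^{\wedge_{\chi'}}$-actions on $M_\dag$ arise by completion of the corresponding $Z$-actions at each finite level.

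For (c), I would apply Corollary~\ref{cor: quotient and restriction functor} at each level $k$: writing $I_k$ for the image of $I$ in the appropriate finite-level quotient of $Z\otimes Z$, one gets $((M/I)/(M/I)\m_{\chi'}^k)_\dag\cong (M/M\m_{\chi'}^k)_\dag/I_k$, and analogously for $MI$; taking $\varprojlim_k$ yields the two isomorphisms of (c). For (d), set $N=M\otimes_\CZ\CZ^{\chi,\chi'}$ for $M\in\HC(U)$. At each level $N/N\m_{\chi'}^k\cong (M/M\m_{\chi'}^k)\otimes_\CZ\CZ^{\chi,\chi'}$; by the decomposition lemma in Section~\ref{ssec:  complete HCbim} this identifies canonically with the direct summand of $M/M\m_{\chi'}^k$ cut out by the left central character $\chi$ (and analogously on the right). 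Since $\bullet_\dag$ is $\CZ$-linear it respects such central-character decompositions, so $(N/N\m_{\chi'}^k)_\dag\cong (M/M\m_{\chi'}^k)_\dag\otimes_\CZ\CZ^{\chi,\chi'}$; passing to $\varprojlim_k$ and using Corollary~\ref{cor: quotient and restriction functor} to rewrite $(M/M\m_{\chi'}^k)_\dag\cong M_\dag/M_\dag\m_{\chi'}^k$ then produces the desired natural isomorphism $N_\dag\cong M_\dag\otimes_\CZ\CZ^{\chi,\chi'}$.

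Finally, for (e) I would use that by Corollary~\ref{prop of HCchi}(c) every $M\in\HC(U^{\chi,\chi'})$ is a quotient of some $P^{\chi,\chi'}_V=C^{\chi,\chi'}(V\otimes U(\g))$, so by the exactness established in (a) it suffices to prove the finite generation claim for $(P^{\chi,\chi'}_V)_\dag$. By (d), this equals $(V\otimes U(\g))_\dag\otimes_\CZ\CZ^{\chi,\chi'}$, reducing the problem to finite generation of $(V\otimes U(\g))_\dag$ as a $\CZ$-bimodule. The Kostant decomposition $V\otimes U(\g)\cong\bigoplus_{V'\in\textnormal{Irr}(\g)}V'\otimes\Hom_\g(V',V\otimes U(\g))$, together with the Kostant theorem (each Hom-space is a finitely generated free $Z$-module) and the identification of the principal finite $W$-algebra with $Z$, gives the required finiteness. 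The main obstacle is part (d): one must carefully verify that $\bullet_\dag$ is compatible with the central-character-component decomposition of $M/M\m_{\chi'}^k$ from Section~\ref{ssec:  complete HCbim}, so that $\CZ$-linearity can be leveraged; once (d) is secured, (e) and the remaining parts follow cleanly from the inverse-limit formalism.
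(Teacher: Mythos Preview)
Your overall plan is sound, and for parts (b), (c), (d), (e), (f) it matches the paper's argument closely: the paper also derives (b) and (f) from the $\CZ$-linearity and $C(G)$-equivariance of the uncompleted functor, reduces (c) to Corollary~\ref{cor: quotient and restriction functor} at each finite level, proves (d) by rewriting $(M\otimes_\CZ\CZ^{\chi,\chi'})_\dag$ as $\varprojlim M_\dag/I_{\chi,\chi'}^k$ and identifying this with $M_\dag\otimes_\CZ\CZ^{\chi,\chi'}$ using that $M_\dag$ is $\CZ$-finite, and deduces (e) from (a) and (d) by covering $M$ by some $P^{\chi,\chi'}_V$. Your extra Kostant-decomposition step in (e) is unnecessary once (d) is in hand, but it does no harm.

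Part (a) is where you diverge from the paper, and there is a slip. You invoke Artin--Rees on $M$ as a right $Z^{\wedge_{\chi'}}$-module, citing Proposition~\ref{prop: equivalent axioms} for finite generation. That proposition only gives finite generation of $M$ as a right $U^{\chi'}$-module, not as a $Z^{\wedge_{\chi'}}$-module; indeed $M$ is essentially never finitely generated over $Z^{\wedge_{\chi'}}$ (already $U^\chi$ itself is not). Your argument can be repaired: since $U^{\chi'}$ is Noetherian and $\m_{\chi'}$ generates a centrally generated two-sided ideal, the noncommutative Artin--Rees lemma applies to the right $U^{\chi'}$-module $M$ and its submodule $M'$, giving the cofinality of $\{M'\cap M\m_{\chi'}^k\}$ and $\{M'\m_{\chi'}^k\}$ that you need. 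With that fix, your Mittag--Leffler argument goes through.

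The paper instead avoids Artin--Rees entirely. It writes the four-term exact sequence $0\to K_k\to L/L\m_{\chi'}^k\to M/M\m_{\chi'}^k\to N/N\m_{\chi'}^k\to 0$, applies $\bullet_\dag$ (landing in finite-dimensional vector spaces, so $\varprojlim$ is automatically exact), and then shows $\varprojlim K_{k,\dag}=0$ by a separate device: Lemma~\ref{prop gemerator complete V} produces a single finite-dimensional $\hat V$ and a functorial embedding $K_\dag\hookrightarrow\Hom_\g(\hat V,K)$ for any $K\in\HC(U^{\chi,\chi'})$, and one checks directly that $\varprojlim\Hom_\g(V,K_k)=0$ for every $V$. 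Your route, once corrected, is more direct; the paper's buys an auxiliary lemma that is of independent interest.
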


To establish the exactness of the functor $\bullet_\dag$ in \eqref{complete restriction} , we will use the following technical lemma:
\begin{Lem} \label{prop gemerator complete V}There is a finite dimensional $\g$-representation $\hat{V}$ such that for any $K\in \HC(U^{\chi, \chi'})$, there is a functorial inclusion $K_\dag \hookrightarrow \Hom_\g(\hat{V},K)$.
\end{Lem}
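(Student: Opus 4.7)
The strategy has three steps: (i) reduce to the uncompleted setting via the inverse-limit construction of $K_\dag$, (ii) construct a natural transformation $\tau_M : M_\dag \to \Hom_\g(\hat V, M)$ for $M$ in the uncompleted category, and (iii) establish injectivity by dévissage to the simple objects.

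For step (i), the inverse-limit description $K_\dag = \varprojlim_k (K/K\m_{\chi'}^k)_\dag$ from Section~\ref{ssec: complete-restriction-functor}, combined with the formula $\Hom_\g(\hat V, K) = \varprojlim_k \Hom_\g(\hat V, K/K\m_{\chi'}^k)$ of Corollary~\ref{prop of HCchi}(f), reduces the problem to producing compatible functorial inclusions $(K/K\m_{\chi'}^k)_\dag \hookrightarrow \Hom_\g(\hat V, K/K\m_{\chi'}^k)$ for a single $\hat V$ depending only on $(\chi, \chi')$. Each quotient $M := K/K\m_{\chi'}^k$ has finite length and is supported at the closed point $(\chi, \chi')$ of $\Spec \CZ$, hence lies in $\HC_\CN(U)$.

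For step (ii), I would exploit the adjunction between $\bullet_\dag$ and $\bullet^\dag$ from Proposition~\ref{prop property of restriction}(b) together with the $\CZ$-linearity of $\bullet_\dag$ from Proposition~\ref{prop property of restriction}(d). The adjunction unit $\eta_M : M \to (M_\dag)^\dag$ induces $\eta_M^\ast : \Hom_\g(\hat V, M) \to \Hom_\g(\hat V, (M_\dag)^\dag)$, and the target is naturally isomorphic to $\Hom_Z((\hat V \otimes U(\g))_\dag, M_\dag)$ via Frobenius reciprocity combined with the adjunction. Choosing $\hat V$ to contain the trivial $\g$-representation as a direct summand yields an inclusion $Z \cong U(\g)_\dag \hookrightarrow (\hat V \otimes U(\g))_\dag$, and dualizing produces a canonical bimodule map $M_\dag \hookrightarrow \Hom_Z((\hat V \otimes U(\g))_\dag, M_\dag) \cong \Hom_\g(\hat V, (M_\dag)^\dag)$. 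I would then lift this canonical map along $\eta_M^\ast$ to obtain $\tau_M$.

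For step (iii), by Proposition~\ref{prop char of HCbim}(a) there are only finitely many simple Harish-Chandra bimodules $L_1, \dots, L_r$ supported at $(\chi, \chi')$. Applying Lemma~\ref{lem generator V} and enlarging $\hat V$ to detect each $L_i$, I would arrange that $\tau_{L_i}$ is injective for every $i$. Then exactness of $\bullet_\dag$ (Proposition~\ref{prop property of restriction}(a)) together with left-exactness of $\Hom_\g(\hat V, -)$ allows a five-lemma dévissage along short exact sequences to propagate injectivity from the $L_i$ to every finite length $M$, and the resulting embeddings are compatible across $k$ by naturality, so pass to the inverse limit.

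The main obstacle lies in the lifting step of (ii). The kernel and cokernel of $\eta_M$ lie in $\HC_{\partial \CN}(U)$ by Proposition~\ref{prop property of restriction}(c), so $\eta_M^\ast$ is not in general an isomorphism, and one must verify that the canonical image of $M_\dag$ in $\Hom_\g(\hat V, (M_\dag)^\dag)$ actually factors through $\Hom_\g(\hat V, M)$. Resolving this—either by identifying the image directly using the realization of $M_\dag$ as a subspace of the completion $M^{\wedge_\xi}$ via the Whittaker-type construction of \cite{IL11}, or by further enlarging $\hat V$ to cover the $\g$-types appearing in the finitely many obstructing simples from $\HC_{\partial \CN}(U)$ supported at $(\chi, \chi')$—constitutes the central technical content of the proof.
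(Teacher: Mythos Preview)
Your reduction in step (i) to the truncated categories ${}_\chi\HC^k_{\chi'}$ via inverse limits is correct and is exactly how the paper begins. However, steps (ii) and (iii) contain genuine gaps that you flag but do not close.

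In step (ii), the lift of the canonical map $M_\dag \to \Hom_\g(\hat V, (M_\dag)^\dag)$ along $\eta_M^\ast$ need not exist: $\eta_M^\ast$ is in general neither injective nor surjective, and neither of your two suggested fixes is made precise. Even if a lift exists for each $M$, a \emph{choice} of lift is not canonically natural in $M$, and naturality is exactly what you need to pass to the inverse limit. In step (iii), enlarging $\hat V$ so that $\Hom_\g(\hat V, L_i) \neq 0$ does not force $\tau_{L_i}$ to be injective: the source $L_{i,\dag}$ has dimension $\textnormal{mult}_\CN L_i$, which can exceed one, and nonvanishing of the target says nothing about the rank of $\tau_{L_i}$. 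So the d\'evissage does not get off the ground.

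The paper bypasses both difficulties with a representability argument. Since ${}_\chi\HC^k_{\chi'}$ is a $\C$-finite abelian category and $\bullet_\dag$ lands in finite-dimensional vector spaces and is exact, the functor $M \mapsto M_\dag$ on ${}_\chi\HC^k_{\chi'}$ is represented by a projective object $P_k$, and one checks $P_{k+1}/P_{k+1}\m_{\chi'}^k \cong P_k$. Take $\hat V$ to be any finite-dimensional $\g$-representation generating $P_1$; lift it compatibly to $V_k \subset P_k$ for all $k$, and use Corollary~\ref{prop of HCchi}(g) to obtain surjections ${}_\chi(\hat V \otimes U(\g)/\m_{\chi'}^k) \twoheadrightarrow P_k$. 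Applying $\Hom(-, K/K\m_{\chi'}^k)$ turns these surjections into the desired functorial inclusions
\[
(K/K\m_{\chi'}^k)_\dag \;=\; \Hom(P_k, K/K\m_{\chi'}^k) \;\hookrightarrow\; \Hom_\g(\hat V, K/K\m_{\chi'}^k),
\]
with compatibility across $k$ built in by construction; taking $\varprojlim$ finishes. The point is that representing the functor produces the natural transformation and its injectivity simultaneously, with no lifting or d\'evissage required.
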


\begin{proof} Let $_\chi\HC^k_{\chi'}$ be the full subcategory of $\HC(U)$ consisting of all Harish-Chandra bimodules $M$ such that $M\m_{\chi'}^k=0,  \m_{\chi}^l M=0$ for some $l>0$. By the Bernstein-Gelfand equivalence \cite{BG80}, the category $_\chi\HC^k_{\chi'}$ is a $\C$-finite abelian category, i.e., an abelian category whose Hom sets between objects are all finite dimensional $\C$-vector spaces. The functor $\bullet_\dag: \;_\chi\HC^k_{\chi'}\rightarrow Z\bim^{C(G)}\xrightarrow[]{\textnormal{forget}} \textnormal{Vecf}$ is exact; here $\textnormal{Vecf}$ is the category of finite dimensional $\C$-vector spaces. Therefore, there is a  unique up to isomorphism projective object $P_k$ in $_\chi \HC^k_{\chi'}$ such that the functor $\bullet_\dag: \;_\chi\HC^k_{\chi'}\rightarrow \text{Vecf}$ can be represented by $\Hom(P_k, \bullet)$. One can easily show that $P_{k+1}/P_{k+1}\m_{\chi'}^k $ is projective and also represents the exact functor $\bullet_\dag: \; _\chi \HC^{k+1}_{\chi'} \rightarrow Z\bim^{C(G)} \xrightarrow[]{\text{forget}} \text{Vecf}_\C$, hence $P_{k+1}/P_{k+1}\m_{\chi'}^k \cong P_k$.

Let $\hat{V}$ be a finite dimensional $\g$-representation that generates $P_1$  as a $U(\g)$-bimodule. We have a surjective map $_\chi(\hat{V}\otimes U(\g)/\m_{\chi'})\rightarrow P_1$. We can find a system of finite dimensional $\g$-representations $V_k \subset P_k$ such that $V_1=\hat{V}$ in $P_1$ and  $V_{k+1}$ maps isomorphically onto $V_k$ under the surjective map $P_{k+1}\rightarrow P_k$. Using Lemma $\ref{cor: surj maps}$ and the fact that $_\chi(V_1\otimes U(\g)/\m_{\chi'})\rightarrow P_1$ is surjective  by the construction, we have that the following maps
\[_\chi(V_k\otimes U(\g)/\m_{\chi'}^k) \rightarrow P_k\]
are surjective for any $k$. 

Now the lemma follows:
\[ K_\dag=\varprojlim (K/K\m_{\chi'}^k)_\dag= \varprojlim \Hom(P_k, K/K\m_{\chi'}^k) \hookrightarrow \varprojlim \Hom(\hat{V}, K/K\m_{\chi'}^k)=\Hom_\g(\hat{V}, K).\]
The functoriality of this inclusion follows by the construction.
\end{proof}

\begin{proof}[Proof of Proposition \ref{prop property of complete restriction}]

a)  Let $0\rightarrow L\rightarrow M\rightarrow N\rightarrow 0$ be a short exact sequence in $\HC(U^{\chi, \chi'})$. It gives us an exact sequence
\Eq{\label{logn exact sequence} 0\rightarrow K_k \rightarrow L/L\m_{\chi'}^k\rightarrow M/M\m_{\chi'}^k \rightarrow N/N\m_{\chi'}^k \rightarrow 0.}
for each $k$. The last exact sequence can be viewed as an exact sequence in $\HC(U)$, therefore, we get an exact sequences of inverse  systems
\[ 0\rightarrow \{ K_{k, \dag}\}\rightarrow \{(L/L\m_{\chi'}^k)_\dag\}\rightarrow \{M/M\m_{\chi'}^k)_\dag\}\rightarrow \{(N/N\m_{\chi'}^k)_\dag\} \rightarrow 0.\]
Note that these inverse limit systems consist of  finite dimensional vector spaces. Therefore, we have an exact sequence:
\[0\rightarrow \varprojlim K_{k, \dag} \rightarrow L_\dag\rightarrow M_\dag \rightarrow N_\dag\rightarrow 0.\]
So it is enough to show that $\varprojlim K_{k, \dag} =0$. Let us show that $\varprojlim \Hom_\g(V, K_k)=0$ for any finite dimensional $\g$-representation $V$. By \eqref{logn exact sequence}, we also have an exact sequences of inverse systems:
\[ 0\rightarrow \Hom_\g(V, K_k)\rightarrow \Hom_\g(V, L/L\m_{\chi'}^k) \rightarrow \Hom_\g(V, M/M\m_{\chi'}^k \rightarrow \Hom_\g(V, N/N\m_{\chi'}^k)\rightarrow 0.\]
Since $V$ is finite dimensional, these inverse limit systems gives us an exact sequence:
\Eq{\label{short exact of Hom} 0\rightarrow \varprojlim \Hom_\g(V, K_k)\rightarrow \varprojlim \Hom_\g(V, L/L\m_{\chi'}^k)\rightarrow \varprojlim \Hom_\g(V, M/M\m_{\chi'}^k)\\ \notag \rightarrow \varprojlim \Hom_\g(V, N/N \m_{\chi'}^k) \rightarrow 0.}
By Corrolary \ref{prop of HCchi}, we have $\varprojlim \Hom_\g(V, L/L\m_{\chi'}^k)=\Hom_\g(V,L)$, similarly with last two terms in \eqref{short exact of Hom}. Therefore, \eqref{short exact of Hom} becomes
\[ 0\rightarrow  \varprojlim \Hom_\g(V, K_k)\rightarrow \Hom_\g(V, L) \rightarrow \Hom_\g(V, M) \rightarrow \Hom_\g(V, N)\rightarrow 0,\]
but $0\rightarrow \Hom_\g(V, L) \rightarrow \Hom_\g(V, M)\rightarrow \Hom_\g(V,N)\rightarrow 0$ is an exact sequence. Therefore,  $\varprojlim \Hom_\g(V, K_k)=0$ for any finite dimensional $\g$-representation $V$.

Now Lemma \ref{prop gemerator complete V} gives us an injective map $ \varprojlim K_{k, \dag}\hookrightarrow \varprojlim \Hom_\g(\hat{V}, K_k)$. Therefore, $\varprojlim K_{k, \dag}=0$.

b)  This is because the functor $\bullet_\dag: \HC(U) \rightarrow Z\bim^{C(G)}$ is $Z\otimes Z$-linear.

c) The proof is similar to that of the Lemma $\ref{cor: quotient and restriction functor}$.

d) Let $I_{\chi, \chi'}$ be the maximal ideal of the closed point $(\chi, \chi')$ in Spec $(Z\otimes Z)$. Then 
\[ (M\otimes_\CZ\CZ^{\chi, \chi'})_\dag\cong \varprojlim (M/MI^k_{\chi, \chi'})_\dag \cong \varprojlim M_\dag/I^k_{\chi, \chi'} \cong M_\dag \otimes_\CZ\CZ^{\chi, \chi'}.\]
The last isomorphism holds because $M_\dag$ is finitely generated over $\CZ$.

e) There is a surjective map $P^{\chi, \chi'}_V \twoheadrightarrow M$. The exactness of $\bullet_\dag$ gives us a surjective map $(P^{\chi, \chi'}_V)_\dag \twoheadrightarrow M_\dag$. Then the statement follows from part $d)$.

f) Follows by  Proposition \ref{prop property of restriction}.

\end{proof}

\begin{Prop}\label{prop monoidal}
For any $M$ in $\HC(U^{\chi'', \chi})$ and $N$ in $\HC(U^{\chi, \chi'})$, there is a natural isomorphism
\Eq{\label{monoidal iso} (M\otimes_{U^{\chi}}N)_\dag \cong M_\dag\otimes_{Z^{\wedge_\chi}} N_\dag.}
\end{Prop}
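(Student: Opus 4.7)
The plan is to deduce the result from the monoidal property of the uncompleted restriction functor $\bullet_\dag : \HC(U) \to Z\bim^{C(G)}$ (Proposition \ref{prop property of restriction}(a)), combined with its compatibility with central completion (Proposition \ref{prop property of complete restriction}(d)). First I would construct the natural map $\phi : (M \otimes_{U^\chi} N)_\dag \to M_\dag \otimes_{Z^{\wedge_\chi}} N_\dag$ by writing both sides as inverse limits, using the definition $X_\dag = \varprojlim_k (X/X\m_{\chi'}^k)_\dag$ and Proposition \ref{prop property of complete restriction}(c). Concretely,
\[
(M \otimes_{U^\chi} N)_\dag = \varprojlim_{j,k}\bigl((M/\m_{\chi''}^j M) \otimes_{U^\chi} (N/N\m_{\chi'}^k)\bigr)_\dag,
\]
and for each pair $(j,k)$ the uncompleted monoidal morphism will induce the desired arrow once one tracks how the middle $U^\chi$-tensor behaves under the central completion.

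The heart of the argument is the projective case $M = P^{\chi'',\chi}_V = C^{\chi'',\chi}(V \otimes U)$ and $N = P^{\chi,\chi'}_{V'} = C^{\chi,\chi'}(V' \otimes U)$. Since both $M$ and $N$ are $\m_\chi$-adically complete in the central direction and the left and right $Z$-actions on $V \otimes U$ coincide (with a parallel statement for $V' \otimes U$), the natural map $M \otimes_U N \to M \otimes_{U^\chi} N$ is an isomorphism and one obtains
\[
M \otimes_{U^\chi} N \cong \bigl((V \otimes U) \otimes_U (V' \otimes U)\bigr) \otimes_\CZ \CZ^{\chi'',\chi'} \cong C^{\chi'',\chi'}\bigl((V \otimes V') \otimes U\bigr).
\]
Applying Proposition \ref{prop property of complete restriction}(d) and then the uncompleted monoidal isomorphism from Proposition \ref{prop property of restriction}(a) yields
\[
(M \otimes_{U^\chi} N)_\dag \cong \bigl((V \otimes U)_\dag \otimes_Z (V' \otimes U)_\dag\bigr) \otimes_\CZ \CZ^{\chi'',\chi'}.
\]
The same Proposition \ref{prop property of complete restriction}(d) applied to $M_\dag$ and $N_\dag$ individually, followed by the same central-completion bookkeeping, unwinds $M_\dag \otimes_{Z^{\wedge_\chi}} N_\dag$ into the same expression, so $\phi$ is an isomorphism on standard projectives.

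For general $M$ and $N$, Corollary \ref{prop of HCchi}(c) produces surjections from standard projectives and hence length-one projective resolutions. By Corollary \ref{prop of HCchi}(b) these standard projectives are projective as one-sided $U^{\chi}$-modules, so $\otimes_{U^\chi}$ with any of them is exact in the other variable; combined with the exactness of $\bullet_\dag$ (Proposition \ref{prop property of complete restriction}(a)), this makes the left-hand side of $\phi$ right-exact in each variable separately. The right-hand side is handled analogously once one checks the corresponding one-sided flatness of $(P^{\chi'',\chi}_V)_\dag$ and $(P^{\chi,\chi'}_{V'})_\dag$ over $Z^{\wedge_\chi}$, which follows from the Kostant freeness of $(V \otimes U)_\dag$ as a $Z$-module. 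A two-stage five-lemma argument — first varying $N$ with $M$ projective, then varying $M$ — promotes the projective isomorphism to all $M, N$. I expect the main obstacle to be the middle paragraph: carefully reconciling $\otimes_U$ with $\otimes_{U^\chi}$ through the central completion and verifying that each isomorphism of bimodules is the one induced by $\phi$ rather than merely an abstract identification.
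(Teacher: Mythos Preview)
Your middle paragraph contains a genuine error that breaks the argument. The displayed isomorphism
\[
P^{\chi'',\chi}_V \otimes_{U^\chi} P^{\chi,\chi'}_{V'} \;\cong\; C^{\chi'',\chi'}\bigl((V\otimes V')\otimes U\bigr) \;=\; P^{\chi'',\chi'}_{V\otimes V'}
\]
is false in general: the right-hand side has forgotten the intermediate character $\chi$. For a concrete counterexample take $\g=\mathfrak{sl}_2$, $V=V'=\C^2$, and $\chi=\chi'=\chi''=\chi_0$. The weights of $V$ are $\pm 1$, so no pair $(\lambda,\mu)$ with $\lambda,\mu\in\{0,-2\}$ satisfies $\mu-\lambda\in\{\pm 1\}$; hence $P^{\chi_0,\chi_0}_V=0$ and the left side vanishes. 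But $V\otimes V'$ contains the trivial representation, so $P^{\chi_0,\chi_0}_{V\otimes V'}\neq 0$. (This does not contradict the Proposition itself, since both sides of \eqref{monoidal iso} vanish here; it shows your route to the projective case is wrong.) The claim you use to justify this step, that the left and right $Z$-actions on $V\otimes U$ coincide, is also false: the support description in Remark~\ref{rem support of HCbim} shows precisely that they differ by shifts coming from the weights of $V$.

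The paper avoids this trap by never trying to identify $M\otimes_{U^\chi}N$ with a single completed standard projective. Instead it establishes two inverse-limit identities directly: one expressing $M_\dag\otimes_{Z^{\wedge_\chi}}N_\dag$ as $\varprojlim_k\bigl(M_\dag/M_\dag\m_\chi^k\otimes_Z N_\dag/\m_\chi^k N_\dag\bigr)$ via a general lemma on tensor products of finitely generated modules over a complete local Noetherian ring, and one expressing $(M\otimes_{U^\chi}N)_\dag$ as $\varprojlim_k\bigl(M/M\m_\chi^k\otimes_U N/\m_\chi^k N\bigr)_\dag$. Only the second requires a reduction, and only in the variable $N$: it suffices to take $N=P^{\chi,\chi'}_V$, and the crucial input is that $P^{\chi,\chi'}_V$ is projective as a \emph{left} $U^\chi$-module together with the equivalence of the $\m_{\chi''}$-adic and $\m_\chi$-adic topologies on $M$ (a consequence of the finiteness in Remark~\ref{rem support of HCbim}). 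The two limits are then matched term-by-term using the monoidality of the uncompleted $\bullet_\dag$. Your five-lemma reduction in both variables is a reasonable instinct, but the projective base case cannot be handled by the formula you wrote; if you want to salvage that strategy you would have to compute $(P^{\chi'',\chi}_V\otimes_{U^\chi}P^{\chi,\chi'}_{V'})_\dag$ by a genuinely different method, and at that point the paper's inverse-limit argument is more direct.
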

\begin{proof}
We will first  prove that:
\Eq{\label{monoidal iso 1}M_\dag\otimes_{Z^{\wedge_\chi}}N_\dag &\cong \varprojlim M_\dag/M_\dag\m_{\chi}^k \otimes_Z N_\dag/\m_{\chi}^kN_\dag,\\
\label{monoidal iso 2}(M\otimes_{U^\chi}N)_\dag&\cong \varprojlim (M/M\m_{\chi}^k\otimes_UN/\m_{\chi}^k N)_\dag.}

To prove \eqref{monoidal iso 1} we need the following claim: Let $(R,\m)$ be a commutative complete local Noetherian ring and $M,N$ be finitely generated $R$-modules. Then $M\otimes_R N \cong \varprojlim M/\m^k \otimes_R N/\m^k$. Indeed, the claim holds if $N$ is free. In general, let $F_2\rightarrow F_1\rightarrow N\rightarrow 0$ be a free presentation of $N$. The following commutative diagram implies that the claim holds whence $M$ and $N$ are finitely generated $R$-modules:
\[ \begin{tikzcd} M\otimes_R F_2\arrow[d]\arrow[r]& M\otimes_R F_1\arrow[r]\arrow[d]& M\otimes_R N \arrow[r]\arrow[d]&0\\
\varprojlim M/\m^k\otimes_R F_2/\m^k\arrow[r]&\varprojlim M/\m^k \otimes_R F_1/\m^k \arrow[r]& \varprojlim M/\m^k \otimes_R N/\m^k \arrow[r]&0\end{tikzcd}\]
The claim implies \eqref{monoidal iso 1} since $M_\dag$ is a finitely generated right $Z^{\wedge_\chi}$-module and $N_\dag$ is a finitely generated left $Z^{\wedge_\chi}$-module. Note that $M_\dag/M_\dag \m_{\chi}^k \otimes_{Z^{\wedge_\chi}} N_\dag/\m_{\chi}^k N_\dag \cong M_\dag/M_\dag\m_{\chi}^k \otimes_Z N_\dag/\m_{\chi}^k N_\dag$.

It is enough to prove \eqref{monoidal iso 2}  in the case $N=P^{\chi, \chi'}_V$, since $\bullet_\dag$ is exact and any object in $\HC(U^{\chi, \chi'})$ can be covered by some projective object of the form $P^{\chi, \chi'}_V$. By construction, 
\[ (M\otimes_{U^\chi} P^{\chi, \chi'}_V)_\dag :=\varprojlim ((M\otimes_{U^\chi}P^{\chi, \chi'}_V)/\m_{\chi''}^k (M\otimes_{U^\chi} P^{\chi, \chi'}_V))_\dag.\]
Since $P^{\mu,\lambda}_V$ is projective as a left $U^\chi$-module, we have:
\[ (M\otimes_{U^\chi}P^{\chi, \chi'}_V)_\dag\cong \varprojlim (M/\m^k_{\chi''}M\otimes_{U^\chi} P^{\chi, \chi'}_V)_\dag.\]
On the other hand, the $\m_{\chi''}$-adic topology and the $\m_{\chi}$-adic topology on $M$ are equivalent, therefore:
\[ (M\otimes_{U^\chi}P^{\chi, \chi'}_V)_\dag\cong \varprojlim (M/M\m_{\chi}^k \otimes_{U^\chi} P^{\chi, \chi'}_V)_\dag \cong \varprojlim (M/M\m_{\chi}^k \otimes_U P^{\chi, \chi'}_V/\m_{\chi}^k P^{\chi, \chi'}_V)_\dag.\]
So we proved \eqref{monoidal iso 2}.

Now since $\bullet_\dag: \HC(U)\rightarrow Z\bim^{C(G)}$ is monoidal and by Lemma $\ref{cor: quotient and restriction functor}$, we have 
\[(M/M\m_{\chi}^k \otimes_U N/\m_{\chi}^k N)_\dag\cong (M/M\m_{\chi}^k)_\dag \otimes_Z (N/\m_{\chi}^k N)_\dag \cong M_\dag/M_\dag\m_{\chi}^k \otimes_Z N_\dag/\m_{\chi}^k N_\dag.\]
Combining with \eqref{monoidal iso 1} and \eqref{monoidal iso 2}, we get the natural isomorphism \eqref{monoidal iso}.
\end{proof}
\begin{Rem} The naturality of the isomorphisms in Proposition \ref{prop monoidal} says that $\bullet_\dag$ is monoidal with the  monoidal structure in Section $\ref{ssec:  complete HCbim}$. 
\end{Rem}

\begin{Prop}\label{prop ff of complete proj} Let $P$ be a projective object of $\HC(U^{\chi, \chi'})$ and $M$ be any object  of $\HC(U^{\chi, \chi'})$. Then the following map is bijective:
\Eq{\label{fully faithful iso}\Hom_{\HC(U^{\chi, \chi'})}(M,P)\rightarrow \Hom_{(Z^{\wedge_\chi}, Z^{\wedge_{\chi'}})\bim^Z}(M_\dag, P_\dag).}
\end{Prop}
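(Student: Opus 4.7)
The plan is to reduce \eqref{fully faithful iso} to its finite-level analogue given by Corollary \ref{cor: ff of restriction}, by writing both sides as inverse limits over the right $\m_{\chi'}$-adic tower. Since both functors $\Hom_{\HC(U^{\chi,\chi'})}(M,-)$ and $\Hom(M_\dag,(-)_\dag)$ are additive in $P$ and the map \eqref{fully faithful iso} is natural, Corollary \ref{prop of HCchi}(c) lets me assume $P = P^{\chi,\chi'}_V = C^{\chi,\chi'}(V\otimes U(\g))$ for some finite-dimensional $\g$-representation $V$.

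Next, both $P$ and $P_\dag$ are finitely generated as right $Z^{\wedge_{\chi'}}$-modules (the former by Corollary \ref{prop of HCchi}(b), the latter by Proposition \ref{prop property of complete restriction}(e)), and $Z^{\wedge_{\chi'}}$ is a complete Noetherian local ring, so $P = \varprojlim_k P/P\m_{\chi'}^k$ and $P_\dag = \varprojlim_k P_\dag/P_\dag\m_{\chi'}^k$. Since $\Hom$ commutes with inverse limits in the second variable, and any morphism into a bimodule annihilated by $\m_{\chi'}^k$ on the right factors uniquely through the corresponding quotient of the source, both sides of \eqref{fully faithful iso} present as the inverse limit over $k$ of the finite-level maps
\[
\Hom_{\HC(U^{\chi,\chi'})}\bigl(M/M\m_{\chi'}^k,\, P/P\m_{\chi'}^k\bigr) \longrightarrow \Hom\bigl((M/M\m_{\chi'}^k)_\dag,\, (P/P\m_{\chi'}^k)_\dag\bigr),
\]
where I have used Proposition \ref{prop property of complete restriction}(c) to identify $(M/M\m_{\chi'}^k)_\dag$ with $M_\dag/M_\dag\m_{\chi'}^k$, and similarly for $P$. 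It therefore suffices to prove these finite-level maps are bijective.

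For fixed $k$, both source and target of the finite-level map are annihilated by $\m_{\chi'}^k$ on the right. Because $U^{\chi'}/\m_{\chi'}^k U^{\chi'} = U(\g)/\m_{\chi'}^k U(\g)$ and a parallel identity holds on the $Z$-side, $\Hom$ in $\HC(U^{\chi,\chi'})$ agrees with $\Hom$ in $\HC(U)$ at this level, and similarly the target $\Hom$ agrees with $\Hom_{Z\bim^{C(G)}}$. Now $M/M\m_{\chi'}^k$ is a quotient of some $P^{\chi,\chi'}_{V'}/P^{\chi,\chi'}_{V'}\m_{\chi'}^k$, itself a direct summand of $V'\otimes U(\g)/\m_{\chi'}^k$, so its associated variety is contained in $\CN$; and $P/P\m_{\chi'}^k$ is a direct summand of $V\otimes U(\g)/\m_{\chi'}^k$. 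Corollary \ref{cor: ff of restriction} therefore applies and gives the desired finite-level bijection. The main delicate point is the passage of Hom groups across the categories $\HC(U^{\chi,\chi'})$, $\HC(U)$, and $\HC_\CN(U)$ — and analogously on the target side — at each finite level; once this bookkeeping is in place, the result follows directly from the already-established Corollary \ref{cor: ff of restriction}.
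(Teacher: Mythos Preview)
Your approach is essentially the paper's: reduce to $P=P^{\chi,\chi'}_V$, identify both sides of \eqref{fully faithful iso} with inverse limits of the finite-level maps, and invoke Corollary~\ref{cor: ff of restriction} at each level. The paper's proof is just a terser version of what you wrote.

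One correction: your claim that $P=P^{\chi,\chi'}_V$ is finitely generated as a right $Z^{\wedge_{\chi'}}$-module is false (it has infinitely many nonzero $\g$-isotypic components), and Corollary~\ref{prop of HCchi}(b) does not assert this. The identification $\Hom_{\HC(U^{\chi,\chi'})}(M,P)\cong\varprojlim_k \Hom(M/M\m_{\chi'}^k,\,P/P\m_{\chi'}^k)$ is nonetheless correct: it follows from $\bigcap_k P\m_{\chi'}^k=0$ and the fact that each isotypic component $\Hom_\g(V',P)$ is a finitely generated, hence complete, $Z^{\wedge_{\chi'}}$-module (Corollary~\ref{prop of HCchi}(d) and the argument of (f)). With that fix, your proof goes through.
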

\begin{proof} It is enough to prove the  statement for a projective object of the form $P^{\chi, \chi'}_V$ as in \eqref{Def of proj bimod}.  We have $P^{\chi, \chi'}_V /P^{\chi, \chi'}_V \m_{\chi'}^k= \;_\chi(V\otimes U(\g)/\m_{\chi'}^k)$, the direct summand of $V\otimes U(\g)/\m_{\chi'}^k$ whose support in Spec $Z\otimes Z$ is $(\chi, \chi')$. By Corrollary $\ref{cor: ff of restriction}$, for any $k$, the following map is bijective:
\[ \Hom_{\HC(U)}(M/M\m_{\chi'}^k, P^{\chi, \chi'}_V/P^{\chi, \chi'}_V \m_{\chi'}^k)\rightarrow \Hom_{Z\bim^Z}((M/M\m_{\chi'}^k)_\dag, (P^{\chi, \chi'}_V/P^{\chi, \chi'}_V \m_{\chi'}^k)_\dag).\]
These bijective maps form a map between two inverse systems so that the map \eqref{fully faithful iso} is the inverse limit of these maps. Therefore, the map \eqref{fully faithful iso} is bijective.
\end{proof}

Let  $(\mu, \lambda)$ be a pair of two dominant weights such that $\mu\in \chi, \lambda \in \chi'$ and $\lambda-\mu \in \Lambda$. Assume that $W_\lambda \subset W_\mu$. We recall the inclusion of algebras  $Z^{\wedge_\chi}\xrightarrow[]{\epsilon_\mu}R^{W_\mu}\hookrightarrow R^{W_\lambda}\xrightarrow[]{\epsilon^{-1}_\lambda} Z^{\wedge_{\chi'}}$. By this inclusion, we can view $Z^{\wedge_{\chi'}}$ as a $(Z^{\wedge_\chi}, Z^{\wedge_{\chi'}})$-bimodule and as a $(Z^{\wedge_{\chi'}},Z^{\wedge_\chi})$-bimodule.  Recall the translation bimodules $P^{\mu, \lambda}\in \HC(U^{\chi, \chi'})$ and $P^{\lambda, \mu}\in \HC(U^{\chi', \chi})$.

\begin{Prop} \label{prop image of complete tranbimod}Let  $(\mu, \lambda)$ be a pair of two dominant weights such that $\mu\in \chi, \lambda \in \chi'$,  $\lambda-\mu \in \Lambda$, and $W_\lambda \subset W_\mu$. We have an isomorphism $P^{\mu, \lambda}_\dag \cong Z^{\wedge_{\chi'}}$ as $(Z^{\wedge_\chi}, Z^{\wedge_{\chi'}})$-bimodules. Furthermore, $C(G)$ acts on $P^{\mu, \lambda}_\dag$ by the same character as on $P^{\mu, \lambda}$. There are similar statements for $P^{\lambda, \mu}_\dag$.
\end{Prop}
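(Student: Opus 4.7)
The plan is to realize $P^{\mu, \lambda}_\dag$ as a quotient of $Z^{\wedge_{\chi'}}$ by a canonical bimodule surjection, then to verify that this surjection has trivial kernel. Two ingredients drive the argument: the coincidence of the two $Z^{\wedge_\chi}$-actions on $P^{\mu,\lambda}$ supplied by Proposition~\ref{prop actions on tranbimod}, and an identification of the central fibre $P^{\mu,\lambda}_\dag/P^{\mu,\lambda}_\dag\m_{\chi'}$ with $\C$ which I will package as a separate lemma (the Lemma~\ref{lem image of complete tranbimod} alluded to in the introduction).

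First I would invoke the $Z\otimes Z$-linearity of $\bullet_\dag$ from Proposition~\ref{prop property of complete restriction}(b) to transport Proposition~\ref{prop actions on tranbimod} to $P^{\mu,\lambda}_\dag$: the left $Z^{\wedge_\chi}$-action and the right action through the inclusion~\eqref{eq Z inclusion} coincide on $P^{\mu,\lambda}_\dag$. This ensures that any right $Z^{\wedge_{\chi'}}$-linear map out of $Z^{\wedge_{\chi'}}$ landing in $P^{\mu,\lambda}_\dag$ is automatically a morphism in $(Z^{\wedge_\chi}, Z^{\wedge_{\chi'}})\bim^{C(G)}$, reducing the whole problem to producing an isomorphism of right $Z^{\wedge_{\chi'}}$-modules.

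Next I would prove the auxiliary central-fibre lemma $P^{\mu,\lambda}_\dag/P^{\mu,\lambda}_\dag\m_{\chi'} \cong \C$. By Proposition~\ref{prop property of complete restriction}(c) this equals $(P^{\mu,\lambda}/P^{\mu,\lambda}\m_{\chi'})_\dag$. The bimodule $P^{\mu,\lambda}/P^{\mu,\lambda}\m_{\chi'}$ lies in ${}_\chi\HC^1_{\chi'}$, and under the Bernstein-Gelfand equivalence of Theorem~\ref{Thm BGG equivalence}, together with Lemma~\ref{lem image of complete Verma} reduced modulo $\m_0$, it is identified with the projective Harish-Chandra bimodule corresponding to $\Delta(\mu)$; a direct computation of $\bullet_\dag$ on this indecomposable projective then yields a one-dimensional image. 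Combined with Proposition~\ref{prop property of complete restriction}(e) and Nakayama's lemma applied to the complete local ring $Z^{\wedge_{\chi'}}$, this shows $P^{\mu,\lambda}_\dag$ is cyclic as a right $Z^{\wedge_{\chi'}}$-module, so any generator provides a surjective bimodule map $\phi: Z^{\wedge_{\chi'}} \twoheadrightarrow P^{\mu,\lambda}_\dag$.

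For injectivity of $\phi$, I would compare endomorphism rings: Proposition~\ref{prop ff of complete proj} identifies $\End_{\HC(U^{\chi,\chi'})}(P^{\mu,\lambda})$ with $\End(P^{\mu,\lambda}_\dag)$, and Proposition~\ref{prop ff on HCproj} together with Lemma~\ref{lem image of complete Verma} gives $\End_{\HC(U^{\chi,\chi'})}(P^{\mu,\lambda}) \otimes_{Z^{\wedge_{\chi'}}} R \cong \End_{\hat{O}}(\hat{\Delta}(\mu)) \cong R$; matching $Z^{\wedge_{\chi'}}$-ranks against the rank of the cyclic quotient forces $\ker\phi = 0$. The $C(G)$-equivariance is then automatic from Proposition~\ref{prop property of complete restriction}(f), since $C(G)$ acts on $P^{\mu,\lambda}$ through the character corresponding to $L(\mu-\lambda)$, i.e.\ the class of $\mu-\lambda$ in $\Lambda/\Lambda_r$. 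The statement for $P^{\lambda,\mu}$ is entirely symmetric, interchanging left and right throughout. The main obstacle is the central-fibre lemma: carefully tracking $\bullet_\dag$ through the Bernstein-Gelfand equivalence and computing its value on the indecomposable Harish-Chandra bimodule associated to $\Delta(\mu)$ is by far the most delicate step, and the rest of the argument is a fairly formal Nakayama-plus-rank deduction.
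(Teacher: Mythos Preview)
Your overall architecture matches the paper's exactly: reduce to a right $Z^{\wedge_{\chi'}}$-module statement via Proposition~\ref{prop actions on tranbimod}, establish the central-fibre identity $P^{\mu,\lambda}_\dag/P^{\mu,\lambda}_\dag\m_{\chi'}\cong\C$ (this is Lemma~\ref{lem image of complete tranbimod}), conclude cyclicity by Nakayama, and then kill the kernel of the resulting surjection $Z^{\wedge_{\chi'}}\twoheadrightarrow P^{\mu,\lambda}_\dag$.

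The two places where your argument diverges from the paper are worth flagging.

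\textbf{The central-fibre lemma.} You correctly isolate this as the delicate step, but your proposed route---identify $P^{\mu,\lambda}/P^{\mu,\lambda}\m_{\chi'}$ with the Harish-Chandra bimodule corresponding to $\Delta(\mu)$ under Bernstein--Gelfand and then ``directly compute'' $\bullet_\dag$ on it---has a gap. Knowing the image of $P^{\mu,\underline\lambda}$ under $T_\lambda$ does not by itself tell you what $\bullet_\dag$ does to it: the functor $\bullet_\dag$ is defined on Harish-Chandra bimodules, and the Bernstein--Gelfand equivalence does not a priori intertwine it with anything on the category-$O$ side. The paper closes this gap in Section~\ref{sec7} by invoking a separate restriction functor $\bullet_\dag: O_{\lambda+\Lambda}\to Z\text{-mod}^{C(G)}$ constructed in \cite{IL15}, which satisfies the compatibility $(M\otimes_U X)_\dag\cong M_\dag\otimes_Z X_\dag$ and sends $\Delta(\nu)$ to a one-dimensional space (via $\textnormal{mult}_{\fb^\perp}\Delta(\nu)=1$). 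With this tool, $P^{\mu,\underline\lambda}_\dag\otimes_Z\Delta(\lambda)_\dag\cong\Delta(\mu)_\dag$ and $\Delta(\lambda)_\dag\cong Z/\m_{\chi'}$ give $P^{\mu,\underline\lambda}_\dag\cong\C$ immediately. Without it, your ``direct computation'' is not substantiated.

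\textbf{Injectivity.} Your endomorphism-ring argument is correct and is a genuinely different route: from $\End(P^{\mu,\lambda}_\dag)\cong Z^{\wedge_{\chi'}}/I$ and $\End_{\HC}(P^{\mu,\lambda})\otimes_{Z^{\wedge_{\chi'}}}R\cong R$ one gets $R/IR\cong R$, hence $I=0$ by faithful flatness. The paper instead observes that $P^{\mu,\lambda}$ is torsion-free as a right $Z^{\wedge_{\chi'}}$-module (it is a summand of a free module), and exactness plus $Z^{\wedge_{\chi'}}$-linearity of $\bullet_\dag$ transfers torsion-freeness to $P^{\mu,\lambda}_\dag$; a nonzero cyclic torsion-free module over a domain is free of rank one. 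This is shorter and avoids Propositions~\ref{prop ff on HCproj} and~\ref{prop ff of complete proj}, though your approach has the virtue of not needing to check torsion-freeness survives the limit defining $\bullet_\dag$.
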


To prove Proposition \ref{prop image of complete tranbimod}, we need the following lemma whose proof will be provided in Section $\ref{sec7}$. We set $P^{\mu, \ulambda}:=\:_\chi(L(\mu-\lambda)\otimes U(\g)/\m_{\chi'})$, the direct summand of the Harish-Chandra bimodule $L(\mu-\lambda)\otimes U(\g)/\m_{\chi'}$ whose support in Spec $Z \otimes Z$ is $(\chi, \chi')$. Then $P^{\mu, \ulambda}\cong P^{\mu, \lambda}/P^{\mu, \lambda}\m_{\chi'}$. Similarly, we set $P^{\ulambda, \mu}=(U(\g)/\m_{\chi'} \otimes L(\lambda-\mu))_\chi$ and then $P^{\ulambda, \mu}\cong  P^{\lambda, \mu}/\m_{\chi'}P^{\lambda, \mu}$.

\begin{Lem}\label{lem image of complete tranbimod}Let  $(\mu, \lambda)$ be a pair of two dominant weights such that $\mu\in \chi, \lambda \in \chi', \lambda-\mu \in \Lambda$ and $W_\lambda \subset W_\mu$. Then the image $P^{\mu, \ulambda}_\dag$ is isomorphic to $Z^{\wedge_{\chi'}}/\m_{\chi'}\cong \C$ as right $Z^{\wedge_{\chi'}}$-modules. Similarly, $P^{\ulambda, \mu}_\dag \cong \C$ as left $Z^{\wedge_{\chi'}}$-modules.
\end{Lem}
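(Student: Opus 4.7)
The plan is to show $\End(P^{\mu,\ulambda}) = \C$ using the Bernstein--Gelfand equivalence, transfer this to $(P^{\mu,\ulambda})_\dag$ via Corollary \ref{cor: ff of restriction}, and conclude through a short local-ring argument. First, since $P^{\mu,\ulambda}\m_{\chi'} = 0$, the $Z\otimes Z$-linearity of $\bullet_\dag$ (Corollary \ref{cor: quotient and restriction functor}) gives $(P^{\mu,\ulambda})_\dag \m_{\chi'} = 0$, so the right $Z^{\wedge_{\chi'}}$-action factors through $\C$. Because $P^{\mu, \ulambda}$ is a direct summand of $L(\mu-\lambda)\otimes U(\g)/\m_{\chi'}\in \HC_\CN(U)$, Proposition \ref{prop property of restriction} ensures that $(P^{\mu, \ulambda})_\dag$ is finite-dimensional over $\C$.

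Next, I would compute $\End_{\HC(U)}(P^{\mu, \ulambda})$ using the fully faithful functor $T_\lambda$ of Theorem \ref{Thm BGG equivalence}. Since $\m_{\chi'}$ annihilates $\Delta(\lambda)$, one has
\[ T_\lambda(P^{\mu, \ulambda}) = P^{\mu, \ulambda}\otimes_U \Delta(\lambda) = \pr_\chi(L(\mu-\lambda)\otimes \Delta(\lambda)), \]
which by Lemma \ref{lem image of Verma} (using the hypothesis $W_\lambda\subset W_\mu$) equals $\Delta(\mu)$. Since $\End_O(\Delta(\mu)) = \C$, one concludes $\End(P^{\mu, \ulambda}) = \C$.

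To transfer this to the image side, I apply Corollary \ref{cor: ff of restriction} with $M = P = P^{\mu, \ulambda}$, valid since $P^{\mu, \ulambda}$ is a direct summand of $L(\mu-\lambda) \otimes U(\g)/\m_{\chi'}$. This yields $\End_{Z\bim^{C(G)}}((P^{\mu, \ulambda})_\dag) = \C$. The left $Z^{\wedge_\chi}$-action then produces a ring map $Z^{\wedge_\chi}\to \End((P^{\mu, \ulambda})_\dag) = \C$ whose kernel contains $\m_\chi$, so $\m_\chi\cdot (P^{\mu, \ulambda})_\dag = 0$. Thus $(P^{\mu, \ulambda})_\dag$ is a finite-dimensional $\C$-vector space with $\End_\C = \C$, forcing $\dim_\C (P^{\mu, \ulambda})_\dag = 1$ and hence $(P^{\mu, \ulambda})_\dag \cong Z^{\wedge_{\chi'}}/\m_{\chi'}\cong \C$ as right $Z^{\wedge_{\chi'}}$-modules. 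The statement for $P^{\ulambda, \mu}_\dag$ follows by the symmetric argument using the right-module categories of Remark \ref{rem3}.

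The main hurdle is the identification $T_\lambda(P^{\mu, \ulambda}) = \Delta(\mu)$: without the hypothesis $W_\lambda\subset W_\mu$, additional Verma factors would enter the $\chi$-projection of $L(\mu-\lambda)\otimes \Delta(\lambda)$, enlarging the endomorphism ring and breaking the dimension count in the final step.
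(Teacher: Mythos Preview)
Your argument is correct and takes a genuinely different route from the paper's proof. The paper introduces in Section~5.3 a separate restriction functor $\bullet_\dag: O_{\lambda+\Lambda}\to Z\text{-mod}^{C(G)}$ for category $O$, establishes $\Delta(\mu)_\dag\cong\C$ via the multiplicity formula $\dim_\C X_\dag=\textnormal{mult}_{\fb^\perp}X$, and then uses the compatibility $(M\otimes_U X)_\dag\cong M_\dag\otimes_Z X_\dag$ together with $P^{\mu,\ulambda}\otimes_U\Delta(\lambda)\cong\Delta(\mu)$ to read off $\dim_\C P^{\mu,\ulambda}_\dag=1$ directly. Your approach bypasses this entire apparatus: you use the Bernstein--Gelfand full faithfulness (Theorem~\ref{Thm BGG equivalence}) to compute $\End(P^{\mu,\ulambda})=\End_O(\Delta(\mu))=\C$, transport this through Corollary~\ref{cor: ff of restriction}, and then run a local-ring/endomorphism argument to force the dimension to be $1$. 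This is more economical in that it uses only machinery already in place by the end of Section~5.1; the paper's route, on the other hand, sets up a tool (the category $O$ restriction functor) that has independent interest and yields the dimension by an explicit geometric formula rather than an indirect endomorphism count.

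One small point you might make explicit: to pass from $\End_{Z\bim^{C(G)}}((P^{\mu,\ulambda})_\dag)=\C$ to $\dim_\C(P^{\mu,\ulambda})_\dag=1$, you are implicitly using that once both $\m_\chi$ and $\m_{\chi'}$ act trivially, the $C(G)$-equivariant $Z$-bimodule endomorphisms are exactly $\prod_\xi\End_\C(V_\xi)$ over the graded pieces, so this product being one-dimensional forces a single one-dimensional piece. This is immediate, but worth a sentence since it is where the $C(G)$-grading enters.
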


\begin{proof}[Proof of Proposition \ref{prop image of complete tranbimod}] By Proposition \ref{prop actions on tranbimod}, it is enough to show that $P^{\mu, \lambda}_\dag \cong Z^{\wedge_{\chi'}}$ as right $Z^{\wedge_{\chi'}}$-modules. By the definition of $P^{\mu, \lambda}$ , the right action of $Z^{\wedge_{\chi'}}$ on $P^{\mu, \lambda}$ is torsion free. On the other hand, the functor $\bullet_\dag$ is exact and $Z^{\wedge_\chi}\otimes Z^{\wedge_{\chi'}}$-bilinear. Therefore, $P^{\mu, \lambda}_\dag$ is also torsion free as a right $Z^{\wedge_{\chi'}}$-module. 

So we have that $P^{\mu, \lambda}_\dag$ is finitely generated and torsion free as a right   $Z^{\wedge_{\chi'}}$-module and $P^{\mu, \lambda}_\dag/P^{\mu, \lambda}_\dag \m_{\chi'}$ is isomorphic to $\C$ by Lemma \ref{lem image of complete tranbimod}. These imply that $P^{\mu, \lambda}\cong Z^{\wedge_{\chi'}}$ as right $Z^{\wedge_{\chi'}}$-modules, completing the proof.
\end{proof}


\subsection{Proof of Lemma \ref{lem image of complete tranbimod}} \label{sec7}\

Let $\lambda$ be a weight in $\fh^*$. Let $O_{\lambda+\Lambda}$ be the full subcategory of $O$ consisting of all objects whose weights lie in $\lambda+\Lambda$. Recall $C(G)$ is the center of the group $G$ and $Z$ is the center of $U(\g)$. In \cite[$\mathsection 4.1$]{IL15}, there is the functor
\Eq{\label{restriction for O}\bullet_\dag: O_{\lambda+\Lambda}\rightarrow Z\text{-mod}^{C(G)};}
here $Z\text{-mod}^{C(G)}$ is the category of $\mathfrak{X}$-graded $Z$-modules. 

Let us briefly describe how this functor is constructed in ~\cite{IL15}. Let $M$ be an object in $O_{\lambda+\Lambda}$. For each $\lambda \in \fh^*$, there is  the corresponding character $\lambda: \fb\rightarrow \C$. We define the $\lambda$-shifted action of $\fb$ on $M$ as follows: $x\cdot_\lambda m=xm -\lambda(x)m$ for any $x\in \fb$ and $m \in M$. This $\lambda$-shifted action can be integrated into an $B$-action on $M$. The action of $C(G)\subset B$ on each object in $O_{\lambda+\Lambda}$ will change if we replace the character $\lambda$ by another character $\lambda'\in \lambda+\Lambda$ in the shift action of $\fb$ . Therefore, we will fix the  shift action of $\fb$ by a fixed character $\lambda$.

We can find some \emph{good filtration} $\{F_iM\}$ on $M$ such that $[F_iU, F_jM]\subset F_{i+j-1}M$ and this filtration is $B$-stable. The Rees module $R_\hbar M$ is a module over $\CU_\hbar$, so we can consider the completion $R_\hbar^{\wedge_\xi} M$ at the character $\xi: \CU_\hbar \rightarrow \C$.

Recall the Kirillov-Kostant skew-symmetric form on $T_\xi\g^*\cong \g$. The subspace $\fb \subset \g$ is a maximal isotropic subspace of $T_\xi\g^*$, and $\mathfrak{u}:=\fb\cap T_\xi \BO$ is a Langrangian subspace of the symplectic vector space $T_\xi \BO$. The space $\C[[\hbar, \mathfrak{u}]]$ is naturally a module over the completed Weyl algera $\CA^{\wedge_0}_\hbar$. A lifting of $V:=T_\xi \BO$ to $\CU_\hbar$ can be chosen so that the image of $\mathfrak{u}$ is contained in $\fb\CU_\hbar$, \cite[Lemma $4.1$]{IL15}. Then we have a decomposition of $\CU_\hbar^{\wedge_\xi}\cong \CA_\hbar^{\wedge_0}\widehat{\otimes}_{\C[[\hbar]]} \CW'_\hbar$-modules:
\[ R_\hbar^{\wedge_\xi}M \cong \C[[\hbar, \mathfrak{u}]]\widehat{\otimes} \CM'_\hbar.\]
The space $\CM'_\hbar$ comes with an $\C^{\x}$-action. Let $\CM_\hbar$ denote the locally $\C^{\x}$-finite part of $\CM'_\hbar$. We set:
\[ M_\dag:= \CM_\hbar/(\hbar-1)\CM_\hbar.\]

Let $\fb^\perp$ be the annihilator of $\fb$ in $\g^*$. The associated graded $\gr _F M$ is a module over $S(\g)/S(\g)\fb\cong \C[\fb^\perp]$. Define $\text{mult}_{\fb^\perp} M$ to be the multiplicity of $\gr _F M$ on the irreducible variety $\fb^\perp$.

The functor in \eqref{restriction for O} satisfies the following properties, see \cite[$\mathsection4.1.4$, $\mathsection 4.2.4$]{IL15}:
\begin{Prop}
a) The functor $\bullet_\dag$ in \eqref{restriction for O} is exact.

b) If $M\in \HC(U)$ and $ X\in O_{\lambda+\Lambda}$, then $(M\otimes_U X)_\dag \cong M_\dag\otimes_Z X_\dag$.

c) For any $X$ in $O_{\lambda+\Lambda}$, the image $X_\dag$ is a finite dimensional vector space over $\C$. Furtheremore, $\dim_\C X_\dag =\textnormal{mult}_{\fb^\perp}X$.

d) For any $X, Y$ in $O_{\lambda+\Lambda}$, the following map:
        \[ \Hom_O(X,Y)\rightarrow \Hom_{Z\textnormal{-mod}^{C(G)}}(X_\dag, Y_\dag)\]
        is $Z$-linear.
\end{Prop}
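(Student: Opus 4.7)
The plan is to prove each of the four properties by tracking them through the construction of $\bullet_\dag$. Recall that the construction factors as: choose a $B$-stable good filtration on $X \in O_{\lambda+\Lambda}$, form the Rees module $R_\hbar X$, complete at $\ker \xi$, apply the Fedosov-type splitting $R_\hbar^{\wedge_\xi} X \cong \C[[\hbar, \mathfrak{u}]] \widehat\otimes_{\C[[\hbar]]} \CM'_\hbar$, extract the $\C^\times$-locally finite part $\CM_\hbar$, and set $X_\dag = \CM_\hbar/(\hbar-1)\CM_\hbar$. For part (a), I would verify that each step is exact: the Rees functor is exact on $O_{\lambda+\Lambda}$ (good filtrations restrict and descend); completion at $\ker \xi$ is exact on Noetherian $\CU_\hbar$-modules; the Fedosov splitting is functorial and extracting $\CM'_\hbar$ amounts to taking a tensor factor; passage to the $\C^\times$-finite part is exact because the $\C^\times$-weights on $\CM'_\hbar$ are bounded below by a uniform constant; and mod-$(\hbar - 1)$ is exact on $\CM_\hbar$ since $\hbar - 1$ acts as a nonzerodivisor.

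For part (b), I would follow the pattern of Proposition \ref{prop monoidal}. Choose compatible good filtrations so that $R_\hbar(M \otimes_U X) \cong R_\hbar M \otimes_{\CU_\hbar} R_\hbar X$; this isomorphism survives completion at $\xi$. In the combined decomposition, the $\CA_\hbar^{\wedge_0}$-factor coming from $M$ acts on the $\C[[\hbar, \mathfrak{u}]]$-factor coming from $X$ through the Fock representation on the Lagrangian $\mathfrak{u} \subset V$, collapsing these two contributions back to $\C[[\hbar, \mathfrak{u}]]$. What remains is the $\CW'_\hbar$-factor $\CM'_\hbar$ tensored over $\CW'_\hbar$ with the analogous factor for $X$. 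Extracting $\C^\times$-finite parts commutes with this tensor product (boundedness of weights), and specializing $\hbar = 1$ together with $Z \cong \CW$ for regular $e$ yields $M_\dag \otimes_Z X_\dag$.

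For part (c), the identification of $\dim_\C X_\dag$ with $\textnormal{mult}_{\fb^\perp} X$ is the main obstacle. Since $X \in O_{\lambda+\Lambda}$, the associated graded $\gr X$ is annihilated by $S(\g)\fb$, hence is a coherent $\C[\fb^\perp]$-module supported on $\fb^\perp$. Comparing associated gradeds across the decomposition yields $\gr(\C[[\hbar,\mathfrak{u}]] \widehat\otimes \CM'_\hbar) \cong \C[\mathfrak{u}][[\hbar]] \otimes \gr \CM'_\hbar$, while the left side is the $\xi$-completion of $\gr R_\hbar X$, a module over $\C[\fb^\perp]^{\wedge_\xi}[[\hbar]]$. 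Using that the Kirillov-Kostant form pairs $\fb^\perp$ transversally with a complement to $\mathfrak{u}$ in $V$ inside $T_\xi\g^*$, the affine space $\fb^\perp$ meets the Kostant slice $S$ transversally along $\mathfrak{u}$ at $\xi$. This transversality lets one identify $\gr \CM'_\hbar/\hbar$ with the generic fiber of $\gr X$ along $\fb^\perp$, whose length is exactly $\textnormal{mult}_{\fb^\perp} X$. Finite-dimensionality of $X_\dag$ follows, and the dimension formula descends through extraction of the $\C^\times$-finite part and $\hbar = 1$ specialization; carefully justifying these last two reductions (i.e., that the multiplicity is preserved) is where I expect the actual technical work to lie.

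For part (d), $Z$-linearity is automatic from the construction: any morphism $\phi: X \to Y$ in $O_{\lambda+\Lambda}$ lifts to a $\CU_\hbar^{\wedge_\xi}$-linear map of completed Rees modules respecting the $\CA_\hbar^{\wedge_0} \widehat\otimes \CW'_\hbar$-decomposition, hence induces a $\CW'_\hbar$-linear map on the $\CM'_\hbar$-factors. After passage to the $\C^\times$-finite part and mod-$(\hbar - 1)$ specialization, this becomes a $\CW$-linear map, and using $\CW \cong Z$ for regular $e$ gives the claimed $Z$-linearity. The $C(G)$-equivariance is inherited from the functoriality of the shifted $B$-action used to define $R_\hbar X$.
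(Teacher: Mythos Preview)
The paper does not supply its own proof of this proposition: it simply cites \cite[\S4.1.4, \S4.2.4]{IL15} and moves on. So there is no argument in the paper to compare against; the proposition is quoted as an established result from Losev's earlier work.

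Your sketch is a plausible reconstruction of how such a proof would go, and it is in the spirit of the constructions in \cite{IL11,IL15}: exactness is checked step by step through Rees, completion, Fedosov splitting, $\C^\times$-finite part, and specialization; monoidality in (b) is obtained by tracking the $\CA_\hbar^{\wedge_0}$-factor through the Fock module; $Z$-linearity in (d) is immediate from $\CW'_\hbar$-linearity. These parts are essentially correct as outlines.

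Part (c) is the one place where your account is genuinely incomplete rather than merely terse. You assert that $\fb^\perp$ meets the Kostant slice $S$ transversally along $\mathfrak{u}$ at $\xi$ and that this lets you read off the generic fiber length; but the statement $\dim_\C X_\dag = \textnormal{mult}_{\fb^\perp} X$ requires more: one has to identify $\CM'_\hbar/\hbar\CM'_\hbar$ with the restriction of $\gr X$ to the slice direction, and then argue that passing to the $\C^\times$-finite part and modding out by $(\hbar-1)$ preserves the rank. You flag this yourself (``carefully justifying these last two reductions \ldots\ is where I expect the actual technical work to lie''), and that is accurate: this is precisely the content of \cite[\S4.1.4]{IL15}, and it does not reduce to a one-line transversality statement. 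If you intend to give a self-contained proof rather than cite \cite{IL15}, this step needs to be written out in full.
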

\begin{Rem}
    There is also an analog of the functor $\bullet_\dag$ in \eqref{restriction for O} for the category $O^r_{\lambda+\Lambda}$ of the right $U(\g)$-modules; see the end of Section $4.1.3$ in \cite{IL15}.
\end{Rem}
\begin{Cor}
    For any $\mu \in \lambda+\Lambda$, we have $\Delta(\mu)_\dag\cong \C$ as $Z/\m_{\chi_\mu}$-modules.
\end{Cor}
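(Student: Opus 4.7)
The plan is to combine the dimension formula $\dim_\C X_\dag = \textnormal{mult}_{\fb^\perp} X$ with the fact that $Z$ acts on $\Delta(\mu)$ by the central character $\chi_\mu$, so that $\Delta(\mu)_\dag$ is forced to be one-dimensional with $Z$ acting through $Z/\m_{\chi_\mu}$.

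First I would equip $\Delta(\mu) = U(\g)\otimes_{U(\fb)} \C_\mu$ with the good filtration induced from the PBW filtration on $U(\g)$, so that $\gr \Delta(\mu) \cong S(\g)/S(\g)\fb \cong S(\n_-) \cong \C[\fb^\perp]$ as graded $S(\g)$-modules. In particular, $\gr \Delta(\mu)$ is free of rank $1$ over $\C[\fb^\perp]$, so $\textnormal{mult}_{\fb^\perp} \Delta(\mu) = 1$. Invoking part c) of the proposition preceding the corollary, this gives $\dim_\C \Delta(\mu)_\dag = 1$.

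Next I would use the $Z$-linearity of $\bullet_\dag$ (part d) of the same proposition). Since $Z$ acts on $\Delta(\mu)$ through the quotient $Z \twoheadrightarrow Z/\m_{\chi_\mu}$, the induced $Z$-action on the one-dimensional space $\Delta(\mu)_\dag$ also factors through $Z/\m_{\chi_\mu} \cong \C$. Combining this with the dimension count, $\Delta(\mu)_\dag$ is a one-dimensional $Z$-module on which $\m_{\chi_\mu}$ acts by zero, hence isomorphic to $Z/\m_{\chi_\mu}$ as $Z$-modules.

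I do not anticipate a serious obstacle: the only nontrivial input is the associated graded computation, which is standard from the PBW theorem applied to the induced module $U(\g)\otimes_{U(\fb)}\C_\mu$. Everything else is a direct application of the properties of the functor $\bullet_\dag: O_{\lambda+\Lambda}\to Z\textnormal{-mod}^{C(G)}$ quoted from \cite{IL15}.
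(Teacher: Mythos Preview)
Your proof is correct and follows essentially the same approach as the paper: compute $\textnormal{mult}_{\fb^\perp}\Delta(\mu)=1$ to get $\dim_\C \Delta(\mu)_\dag=1$, then observe that $\m_{\chi_\mu}$ annihilates $\Delta(\mu)_\dag$. You are in fact slightly more explicit than the paper, which only notes that some power of $\m_{\chi_\mu}$ kills $\Delta(\mu)_\dag$ (enough, since the module is one-dimensional), whereas you use directly that $Z$ acts on $\Delta(\mu)$ by the central character; the citation of part d) is a bit off---what you really use is that the $Z$-module structure on $X_\dag$ is the one induced from $X$, which is built into the target category of the functor.
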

\begin{proof}
We have $\dim_\C\Delta(\mu)_\dag=\text{mult}_{\fb^\perp} \Delta(\mu)=1$. On the other hand, $\Delta(\mu)_\dag$ is killed by some power of $\m_{\chi_\mu}$. This implies the corollary.
\end{proof}

\begin{proof}[Proof of Lemma \ref{lem image of complete tranbimod}] Since $P^{\mu, \ulambda}_\dag$ is a right $Z/\m_{\chi'}$-module, it is enough to show that $\dim_\C P^{\mu, \ulambda}_\dag=1$. We have $P^{\mu,\ulambda}\otimes_U \Delta(\lambda)\cong \Delta(\mu)$.  Therefore, $P^{\mu, \ulambda}_\dag\otimes_Z \Delta(\lambda)_\dag\cong \Delta(\mu)_\dag$. On the other hand, since $P^{\mu, \ulambda}_\dag$ is a right $Z/\m_{\chi'}$-module and $\Delta(\lambda)_\dag\cong Z/\m_{\chi'}$ as left $Z$-modules, we have that  $P^{\mu, \ulambda}_\dag \otimes_Z \Delta(\lambda)_\dag \cong P^{\mu, \ulambda}_\dag$ as left $Z$-modules. Therefore, $P^{\mu, \ulambda}_\dag\cong \Delta(\mu)_\dag \cong \C$ as vector spaces.

The proof for $P^{\ulambda, \mu}_\dag\cong Z/\m_{\chi'}$ as left $Z$-modules is similar, but we need to consider the category $O^r_{\lambda+\Lambda}$ instead of the category $O_{\lambda+\Lambda}$.
\end{proof}

\subsection{Another realization of the category $\CB_{\mu, \lambda}$} \label{ssec:  another reaization of B(mu, lambda)}\ 

We will give another realization to the category $\CB_{\mu, \lambda}$ defined in the introduction.  For any subset $I$ of the set of simple reflections $\Pi_{[\lambda]}$ in $W_{[\lambda]}$, let $W_I$ be the subgroup generated  by the elements in $I$. Let $R^I$ be the $W_I$-invariant part of $R$ under the usual action of $W_{[\lambda]}$. We have
\[R_w\otimes_R R\otimes_{R^s} R\cong R\otimes_{R^{w^{-1}sw}} R\otimes_R R_w, \quad R_w\otimes_R R_{w'}\cong R_{w'w},\]
for $w,w' \in C$ and $s\in \Pi_{[\lambda]}$. Combining with the description of Soergel bimodules in \cite[Theorem $24.12$]{EMTW}, any object of $\CB_{[\lambda]}$ (see Section $1.2.1$ for the definition)  is isomorphic to a direct summand of some object of the form:
\Eq{\label{singular Soergel bimod} R_w\otimes_{R^{J_1}} R^{I_2}\otimes_{R^{J_2}}\dots R^{I_{n-1}}\otimes_{R^{J_{n-1}}} R,}
where $[\emptyset \subset J_1\supset I_2\subset J_2\dots I_{n-1}\subset J_{n-1}\supset \emptyset]$ is a sequence of subsets of $\Pi_{[\lambda]}$.

 Let us recall the stabilizers $W_\mu, W_\lambda$ of $\mu, \lambda$ in $W$ under the dot action, respectively.  There are two subsets $I,J$ of $\Pi_{[\lambda]}$ such that $W_\mu=W_I$ and $W_\lambda=W_J$. Recall the decomposition $\widehat{W}_{[\lambda]}=C\ltimes W_{[\lambda]}$. For any $w\in C$, the weight $w\cdot \mu$ is dominant, and its stabilizer under the dot $W$-action is $wW_{\mu}w^{-1}$. Let $I_w=wIw^{-1}$. The action map $w: R^{W_\mu}\rightarrow R^{W_{w\cdot \mu}}$ equips $R^{W_{w\cdot\mu}}$ with  an $(R^{W_\mu}, R^{W_{w\cdot \mu}})$-bimodule structure as follows: $f\cdot g=w(f)g$ and $g\cdot h=hg$ for any $f\in R^{W_{\mu}}$ and $g,h\in R^{W_{w\cdot \mu}}$. We denote this bimodule by $R^{I_w}$.

Let us consider the following $(R^{W_{\mu}}, R^{W_{\lambda}})$-bimodule:
\Eq{\label{sing Soergel bimod 2} R^{I_w}\otimes_{R^{J_1}}\otimes R^{I_{2}}\otimes_{R^{J_2}}\dots R^{I_{n-1}} \otimes_{R^{J_{n-1}}}R^{W_\lambda},}
here $I_w\subset J_1\supset I_2\subset \dots I_{n-1}\subset J_{n-1}\supset J$. We equip the bimodule \eqref{sing Soergel bimod 2} with the $C(G)$-action corresponding to the lattice $w\cdot \mu-\lambda+\Lambda_r$. This makes bimodule \eqref{sing Soergel bimod 2} into an object in $\CB_{\mu, \lambda}$.

\begin{Prop}\label{lem:2rel Soergel cat}a) The category $\CB_{\mu, \lambda}$ is the smallest full additive subcategory of  the category $(R^{W_\mu},R^{W_\lambda})\bim^{C(G)}$ consisting of all objects isomorphic to direct summands of objects of the form \eqref{sing Soergel bimod 2} for some $w\in C$.

b) The number of indecomposable objects in $\CB_{\mu, \lambda}$ is equal to the number of double cosets $W_\mu\backslash \widehat{W}_{[\lambda]}/W_\lambda$.

\end{Prop}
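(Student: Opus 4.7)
The plan is to prove part (a) by showing that the two descriptions of the category coincide as Karoubian closures of the same class of $(R^{W_\mu}, R^{W_\lambda})$-bimodules, and then to deduce part (b) from the classification of indecomposable singular Soergel bimodules in \cite{GW}. For part (a), I would first combine \cite[Theorem 24.12]{EMTW} with the braid-like identities recalled immediately before the proposition to conclude that every object of $\CB_{[\lambda]}$ is, up to direct summands, of the form \eqref{singular Soergel bimod}. The task then reduces to comparing the restriction of \eqref{singular Soergel bimod} to $(R^{W_\mu}, R^{W_\lambda})$-bimodules with the bimodules of form \eqref{sing Soergel bimod 2}.

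The key technical tool I would use is that, since $R$ is a graded polynomial ring and each of $W_\mu, W_\lambda, I_w, J_i$ is a finite reflection subgroup, $R$ is a free module over the corresponding invariants, and each invariant subring $R^H$ is a direct summand of $R$ as an $(R^H, R^H)$-bimodule via the averaging idempotent $\tfrac{1}{|H|}\sum_{h\in H} h$. Applying this with $H = W_\mu$ on the left-most factor (noting that $R^{I_w} = w(R^{W_\mu})$ sits inside $R_w$ as a direct summand of $R$ as an $(R^{W_\mu}, R^{I_w})$-bimodule via the $w$-twisted left action) and with $H = W_\lambda$ on the right-most factor exhibits \eqref{sing Soergel bimod 2} as a direct summand of the restriction of the corresponding \eqref{singular Soergel bimod}; the reverse inclusion follows because the same splittings, applied in the opposite direction, decompose the restriction of \eqref{singular Soergel bimod} into a direct sum of bimodules of form \eqref{sing Soergel bimod 2} (for various admissible choices of internal subsets). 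Finally, one checks that the $C(G)$-grading shift by $\mu - \lambda + \Lambda_r$ from the definition of $\CB_{\mu,\lambda}$ combines with the character $w\lambda - \lambda + \Lambda_r$ carried by $R_w$ to produce exactly the character $w\cdot\mu - \lambda + \Lambda_r$ attached to \eqref{sing Soergel bimod 2}; this amounts to the identity $w\cdot\mu - \mu \equiv w\lambda - \lambda \pmod{\Lambda_r}$, which holds because $w$ acts trivially on $\Lambda/\Lambda_r$ and $\mu - \lambda \in \Lambda$.

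The main obstacle I anticipate is the careful bookkeeping of these Frobenius splittings through the successive tensor products over the intermediate $R^{J_i}$, together with the verification that no stray factors survive. Once part (a) is established, part (b) follows: the description \eqref{sing Soergel bimod 2} identifies $\CB_{\mu,\lambda}$ with the category of singular Soergel bimodules for the parabolic subgroups $(W_\mu, W_\lambda)$ inside the extended group $\widehat{W}_{[\lambda]} = C \ltimes W_{[\lambda]}$, and \cite[Theorem 1]{GW} (applied after decomposing $\widehat{W}_{[\lambda]} = \bigsqcup_{w\in C} w\,W_{[\lambda]}$ to separate the classical Gobet--Williamson count inside each coset) then identifies the isomorphism classes of indecomposable objects with the double cosets $W_\mu \backslash \widehat{W}_{[\lambda]} / W_\lambda$.
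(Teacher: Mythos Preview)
Your approach is genuinely different from the paper's. The paper does not prove Proposition~\ref{lem:2rel Soergel cat} directly: it is deduced in Corollary~\ref{Cor: 2rel of Soergel cat} as a byproduct of Theorem~\ref{Thm4}. The logic there runs as follows. Lemma~\ref{cor number of singular Sorgel bimod} first establishes only the inequality $\#\mathrm{Indec}(\CB_{\mu,\lambda}) \geq \#(W_\mu\backslash \widehat{W}_{[\lambda]}/W_\lambda)$, using \cite[Theorem~1]{GW} exactly as you propose. Then the proof of Theorem~\ref{Thm4} shows that $\bullet_\dag$ induces an equivalence $\CP \cong \CB_{\mu,\lambda}$ and that $\CP$ exhausts all projectives in $\HC(U^{\chi,\chi'})$; Proposition~\ref{prop number of complete HCbim} says the latter are indexed by $W_\mu\backslash \widehat{W}_{[\lambda]}/W_\lambda$, which forces equality in part~(b), and part~(a) then follows from the count. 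So the paper's argument passes through the Harish-Chandra side, whereas yours stays entirely within Soergel bimodule combinatorics. Your route makes the proposition logically independent of the main theorem; the paper's route avoids any bookkeeping of singular sequences but renders the proposition downstream of the $\bullet_\dag$ machinery.

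Your direct argument is sound in outline, but the ``reverse inclusion'' step in part~(a) has a gap as written. The averaging idempotent $e_{I_w}$ acting on the leftmost factor $R$ of \eqref{singular Soergel bimod} is an $(R^{I_w}, R^{J_1})$-bimodule projector only when $I_w \subset J_1$, so that right multiplication by $R^{J_1}\subset R^{I_w}$ commutes with the $W_{I_w}$-averaging. For a general object of form \eqref{singular Soergel bimod} there is no such containment (the sequence there starts at $\emptyset$, not at $I_w$), and then your splitting does not decompose $\mathcal{F}(B)$ into pieces of form \eqref{sing Soergel bimod 2}. The fix is actually simpler than any splitting: the restriction of $R_w \otimes_{R^{J_1}} \cdots \otimes_{R^{J_{n-1}}} R$ to an $(R^{W_\mu}, R^{W_\lambda})$-bimodule is \emph{literally isomorphic} to an object of form \eqref{sing Soergel bimod 2}, namely the one with the extended sequence $I_w \subset I_w \supset \emptyset \subset J_1 \supset I_2 \subset \cdots \subset J_{n-1} \supset \emptyset \subset J \supset J$, because $R^{I_w}\otimes_{R^{I_w}} R \cong R$ and $R \otimes_{R^J} R^J \cong R$ as bimodules. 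With this observation both inclusions of Karoubian envelopes are immediate, and your deduction of part~(b) from \cite[Theorem~1]{GW} applied coset-by-coset over $C$ (using the injectivity of $w\mapsto w\cdot\mu-\lambda+\Lambda_r$ on $C$ to separate the $C(G)$-isotypic pieces) goes through.
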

We will prove this lemma in Corollary \ref{Cor: 2rel of Soergel cat}. Let us now prove a weaker statement of Lemma \ref{lem:2rel Soergel cat}.b).

\begin{Lem}\label{cor number of singular Sorgel bimod} The number of indecomposable objects in $\CB_{\mu, \lambda}$ is greater than or equal to the number of double cosets $W_\mu \backslash \widehat{W}_{[\lambda]}/W_\lambda$.
\end{Lem}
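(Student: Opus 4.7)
The plan is to transport the indecomposable projectives of $\HC(U^{\chi, \chi'})$ into $\CB_{\mu, \lambda}$ via the functor $\bullet_\dag$, exploiting its full faithfulness on the projective subcategory.

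By Proposition $\ref{prop number of complete HCbim}$a), the subcategory $\CP \subset \HC(U^{\chi, \chi'})$ of projective objects has exactly $N := |W_\mu\backslash \widehat{W}_{[\lambda]}/W_\lambda|$ non-isomorphic indecomposable objects $P_1, \ldots, P_N$. Theorem $\ref{Thm1}$b), applied with both arguments projective, yields isomorphisms $\Hom(P_i, P_j) \xrightarrow[]{\cong} \Hom(P_{i,\dag}, P_{j,\dag})$. In particular, $\End(P_{i,\dag}) \cong \End(P_i)$ is local, so each $P_{i,\dag}$ is indecomposable, and the $P_{i,\dag}$ are pairwise non-isomorphic. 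Hence it suffices to show that each $P_{i,\dag}$ lies in $\CB_{\mu, \lambda}$.

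To verify $P_{i,\dag} \in \CB_{\mu, \lambda}$, the strategy is to realize each $P_i$ as a direct summand of a tensor product of translation bimodules
\[
P^{\mu_0, \mu_1}\otimes_{U^{\chi_1}} P^{\mu_1, \mu_2}\otimes_{U^{\chi_2}}\cdots \otimes_{U^{\chi_n}} P^{\mu_n, \mu_{n+1}},
\]
with $\mu_0 = \mu$, $\mu_{n+1} = \lambda$, and each consecutive pair satisfying either $W_{\mu_k} \subset W_{\mu_{k+1}}$ or $W_{\mu_{k+1}} \subset W_{\mu_k}$. Then by monoidality of $\bullet_\dag$ (Theorem $\ref{Thm1}$a)) together with the image computation (Theorem $\ref{Thm1}$c)), $P_{i,\dag}$ becomes a direct summand of the corresponding tensor product of bimodules of the form $R^{W_{\mu_k}}$ or $R^{W_{\mu_{k+1}}}$ (with the restricted-scalars bimodule structures, and, when $\mu_k$ and $\mu_{k+1}$ differ by an element of $C$, a twist matching some $R_{w}$). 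This tensor product is by construction a restriction-shift of an object of $\CB_{[\lambda]}$, hence lies in $\CB_{\mu, \lambda}$.

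The main obstacle will be producing such a chain of intermediate dominant weights for each double coset representative $w \in \widehat{W}_{[\lambda]}$. Using the semi-direct decomposition $\widehat{W}_{[\lambda]} = C \ltimes W_{[\lambda]}$, write $w = w_c s_1 \cdots s_n$ with $w_c \in C$ and $s_i \in \Pi_{[\lambda]}$; Lemma $\ref{subgeneric weights}$ supplies a regular dominant weight $\nu \in \lambda + \Lambda$ (so $W_\nu = \{1\}$) and, for each $s_i$, a dominant weight $\nu_i$ with $W_{\nu_i} = \{1, s_i\}$, while $C$-invariance of the dominant chamber of $E(\lambda)$ ensures $w_c \cdot \mu$ is dominant. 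Threading translations through a chain such as $\mu, \nu, w_c\cdot\nu, w_c\cdot \mu, \nu, \nu_1, \nu, \nu_2, \ldots, \nu, \lambda$ --- going via the regular weight $\nu$ whenever two consecutive stabilizers fail to be nested --- yields the desired tensor product, and the identification of $P_i$ as a direct summand in it reduces, via Proposition $\ref{prop ff on HCproj}$ and Lemma $\ref{lem image of complete Verma}$, to the standard computation of translation functors applied to Verma modules in the deformed category $\hat{O}$.
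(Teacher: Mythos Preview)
Your approach is genuinely different from the paper's, and the first two paragraphs are fine: Proposition~\ref{prop number of complete HCbim}a) gives $N$ pairwise non-isomorphic indecomposable projectives $P_1,\dots,P_N$, and full faithfulness (Proposition~\ref{prop ff of complete proj}) forces the $P_{i,\dag}$ to be indecomposable and pairwise non-isomorphic in $(R^{W_\mu},R^{W_\lambda})\bim^{C(G)}$. The entire content of the lemma then reduces to your third claim, that each $P_{i,\dag}$ actually lies in $\CB_{\mu,\lambda}$.

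This is where the argument breaks down. Showing that every indecomposable projective $P_i$ is a direct summand of some tensor string $P^{w,I}$ of translation bimodules is \emph{exactly} the statement ``$\CP=\{\text{all projectives}\}$'' that the paper deduces \emph{from} this lemma in the proof of Theorem~\ref{Thm4}; you cannot simply assume it here. Your proposed reduction via Proposition~\ref{prop ff on HCproj} does not close the gap: that proposition gives $\Hom(P_1,P_2)\otimes_{Z^{\wedge_{\chi'}}}R\cong\Hom(T_\lambda P_1,T_\lambda P_2)$, which is a faithfully flat base change, not an equivalence, so an idempotent on the $\hat{O}$ side splitting off $T_\lambda P_i$ need not descend to an idempotent in $\End(P^{w,I})$. (Proposition~\ref{prop number of complete HCbim}b) does give an equivalence, but only when $\lambda$ is regular, which you cannot assume.) What you are calling a ``standard computation'' is essentially the Bernstein--Gelfand classification of projective functors in the deformed/singular/non-integral setting; it is true, but it is not established in the paper and is of comparable depth to what you are trying to prove.

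The paper avoids this entirely by arguing on the Soergel side alone. It grades indecomposables of $\CB_{\mu,\lambda}$ by the $C(G)$-character, so that for each $w\in C$ the ``type $w$'' summands of the generators~\eqref{sing Soergel bimod 2}, viewed as $(R^{W_{w\cdot\mu}},R^{W_\lambda})$-bimodules, generate Williamson's singular Soergel category $^{I_w}\CB^J$; Williamson's Theorem~1 then gives $\#W_{w\cdot\mu}\backslash W_{[\lambda]}/W_\lambda$ indecomposables for each $w$, and summing over $C$ yields the bound. No input from $\HC(U^{\chi,\chi'})$ is used.
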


\begin{proof}
    For any $ w\in C$, let us call indecomposable objects of $\CB_{\mu, \lambda}$ on which $C(G)$ acts via the character $w\cdot \mu -\lambda +\Lambda_r$  \emph{indecomposable objects of type $w$}. We see that the direct summands of some object in \eqref{sing Soergel bimod 2} are indecomposable objects of type $w$ .
    
    Under taking direct sums and direct summands, the objects in \eqref{sing Soergel bimod 2} viewed as $(R^{W_w\cdot\mu}, R^{W_\lambda})$-bimodules generate the category of singular Soergel bimodules $^{I_w}\CB^J$ attached to the Coxeter system $(W_{[\lambda]}, \Pi_{[\lambda]})$ and the pair $(I^w, J)$ of subsets in $\Pi_{[\lambda]}$; see \cite[]{GW} for details about the category of singular Soergel bimodules. By Theorem $1$ in \cite{GW}, the number of indecomposable objects in $^{I_w}\CB^J$ is equal to the number of double cosets $W_{w\cdot \mu}\backslash W_{[\lambda]}/W_\lambda$. On the other hand, the direct summands of the bimodules \eqref{sing Soergel bimod 2} viewed as   $(R^{W_\mu}, R^{W_\lambda})$-bimodules are the same as the direct summands of the bimodules \eqref{sing Soergel bimod 2} viewed as $(R^{W_{w\cdot \mu}}, R^{W_\lambda})$-bimodules. Thus, the number of indecomposable objects of type $w$ which are direct summands of objects in \eqref{sing Soergel bimod 2} is equal to the number of double cosets $W_{w\cdot \mu}\backslash W_{[\lambda]}/W_\lambda$. Therefore, the number of indecomposable objects in $\CB_{\mu, \lambda}$ is at least
    \[ \sum_{w\in C} \#W_{w\cdot \mu}\backslash W_{[\lambda]} /W_\lambda =\#W_\mu\backslash \widehat{W}_{[\lambda]}/W_\lambda.\]
    
\end{proof}

\subsection{Proof of Theorem $\ref{Thm4}$}\

Under the identifications $Z^{\wedge_\chi}\overset{\epsilon_\mu}{\cong} R^{W_\mu}$ and $Z^{\wedge_{\chi'}}\overset{\epsilon_\lambda}{\cong} R^{W_{\lambda}}$, the functor $\bullet_\dag$ in \eqref{complete restriction} becomes:
\[\bullet_\dag: \HC(U^{\chi, \chi'})\rightarrow (R^{W_{\mu}}, R^{W_{\lambda}})\bim^{C(G)}.\]

We now introduce a full Karoubian additive subcategory $\CP$ of the category $\HC(U^{\chi, \chi'})$. Recall the decomposition $\widehat{W}_{[\lambda]}=C\ltimes W_{[\lambda]}$ and the set of simple reflections $\Pi_{[\lambda]}=\{s_{\a^\lambda_1}, \dots, s_{\a^\lambda_k}\}$ in $W_{[\lambda]}$. For any $w\in C$, the weight $w \cdot \mu$ is dominant. By Lemma \ref{subgeneric weights}, there are dominant weights $\nu, \nu_i, 1\leq i \leq k$ in $\lambda +\Lambda$ such that $\nu$ is regular and $\text{Stab}_W(\nu_i)=\{1, s_{\a^\lambda_i}\}$. By abuse of notation, we will denote $s_{\a^\lambda_{i}}$ by $s_i$.

Let $I=\{i_1, \dots, i_l\}$ be a sequence of integers with $1\leq i_j \leq k$ for all $i_j$. To a pair $(w,I)$ with $w\in C$ and a sequence of integers $I$ as above, we define the following object  in $\HC(U^{\chi, \chi'})$:
\Eq{\label{P objects}P^{w, I}:=P^{w\cdot \mu, \nu} \otimes_{U^{\chi_\nu}}(P^{\nu, \nu_{i_1}}\otimes_{U^{\chi_{\nu_{i_1}}}}P^{\nu_{i_1}, \nu})\otimes_{U^{\chi_\nu}}\dots \otimes_{U^{\chi_{\nu}}}(P^{\nu, \nu_{i_l}}\otimes_{U^{\chi_{\nu_{i_l}}}} P^{\nu_{i_l}, \nu})\otimes_{U^{\chi_\nu}} P^{\nu, \lambda}.}

\begin{Def}
    Let $\CP$ be the full Karoubian additive subcategory of the category $\HC(U^{\chi, \chi'})$ consisting of all  direct sums of objects which are isomorphic to direct summands of some object of the form \eqref{P objects} for some $w\in C$.
\end{Def}
It is obvious that any object in $\CP$ is  projective  in $\HC(U^{\chi, \chi'})$. The next lemma concerns the image of $\CP$ under the functor $\bullet_\dag$.

\begin{Lem} Under the functor $\bullet_\dag$, the image $P^{w,I}_\dag $ is isomorphic to the bimodule:
\Eq{\label{image of P object} R_w\otimes_R (R\otimes_{R^{s_{i_1}}}R) \otimes_R \dots \otimes_R (R\otimes_{R^{s_{i_l}}} R)\otimes_R R}
in the category $\CB_{\mu, \lambda}$; the last component $R$ is viewed as a $(R,R^{W_\lambda})$-bimodule in the obvious way.
\end{Lem}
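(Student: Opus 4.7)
The plan is to apply the monoidality of $\bullet_\dag$ (Proposition \ref{prop monoidal}) iteratively to $P^{w,I}$ and then identify each translation-bimodule factor using Theorem \ref{Thm1}.c) (equivalently, Proposition \ref{prop image of complete tranbimod}). By the monoidal isomorphism $(M\otimes_{U^\chi} N)_\dag\cong M_\dag\otimes_{Z^{\wedge_\chi}}N_\dag$ applied repeatedly,
\[
P^{w,I}_\dag \cong P^{w\cdot\mu,\nu}_\dag\otimes_{Z^{\wedge_{\chi_\nu}}}\bigl(P^{\nu,\nu_{i_1}}_\dag\otimes_{Z^{\wedge_{\chi_{\nu_{i_1}}}}}P^{\nu_{i_1},\nu}_\dag\bigr)\otimes_{Z^{\wedge_{\chi_\nu}}}\cdots\otimes_{Z^{\wedge_{\chi_\nu}}}P^{\nu,\lambda}_\dag.
\]

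Next I will compute each factor. Since $\nu$ is regular, $W_\nu=\{1\}$ is contained in all of $W_{\nu_{i_j}}=\{1,s_{i_j}\}$, $W_{w\cdot\mu}$ and $W_\lambda$, so Theorem \ref{Thm1}.c) applies in both directions at every stage. Using the identifications $\epsilon_\nu\colon Z^{\wedge_{\chi_\nu}}\cong R$, $\epsilon_{\nu_{i_j}}\colon Z^{\wedge_{\chi_{\nu_{i_j}}}}\cong R^{s_{i_j}}$, and $\epsilon_\lambda\colon Z^{\wedge_{\chi_\lambda}}\cong R^{W_\lambda}$, the middle pair collapses to $P^{\nu,\nu_{i_j}}_\dag\otimes_{Z^{\wedge_{\chi_{\nu_{i_j}}}}}P^{\nu_{i_j},\nu}_\dag\cong R\otimes_{R^{s_{i_j}}}R$ as $(R,R)$-bimodules, while the rightmost factor $P^{\nu,\lambda}_\dag$ becomes $R$ viewed as an $(R,R^{W_\lambda})$-bimodule with the right action restricted along $R^{W_\lambda}\hookrightarrow R$.

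The main subtlety lies in the leftmost factor $P^{w\cdot\mu,\nu}_\dag$. Theorem \ref{Thm1}.c) supplies $P^{w\cdot\mu,\nu}_\dag\cong Z^{\wedge_{\chi_\nu}}$, where $Z^{\wedge_\chi}$ acts on the left via $\epsilon_{w\cdot\mu}\colon Z^{\wedge_\chi}\hookrightarrow R^{W_{w\cdot\mu}}\hookrightarrow R$. However, in the definition of $\CB_{\mu,\lambda}$ we have used the different identification $\epsilon_\mu\colon Z^{\wedge_\chi}\cong R^{W_\mu}$. Tracing through the Harish-Chandra isomorphism together with the translations $t_\mu$ and $t_{w\cdot\mu}$ from \eqref{identify complete Z} yields the identity $\epsilon_{w\cdot\mu}(z)=w\bigl(\epsilon_\mu(z)\bigr)$ for all $z\in Z^{\wedge_\chi}$, where $w$ acts on $R$ through the linear $W$-action. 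This is precisely the twist that defines $R_w$ in Section \ref{ssec: Sbim at non integral }, so $P^{w\cdot\mu,\nu}_\dag$ becomes $R_w$ restricted to an $(R^{W_\mu},R)$-bimodule. Assembling the pieces and collapsing the $Z^{\wedge_{\chi_\nu}}$-tensors to $R$-tensors gives exactly the bimodule \eqref{image of P object}.

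Finally, for the $C(G)$-grading: by Proposition \ref{prop property of complete restriction}.f), $C(G)$ acts on $P^{w,I}_\dag$ through the same character as on $P^{w,I}$, namely the telescoping sum $(w\cdot\mu-\nu)+\sum_j\bigl((\nu-\nu_{i_j})+(\nu_{i_j}-\nu)\bigr)+(\nu-\lambda)\equiv w\cdot\mu-\lambda\pmod{\Lambda_r}$. Using $w\cdot\mu-\mu\equiv\tau(w)\pmod{\Lambda_r}$ from Section \ref{non integral weight}, this equals the intrinsic $C(G)$-grading $\tau(w)$ on $R_w\otimes_R\cdots\otimes_R R$ combined with the grading shift by $\mu-\lambda+\Lambda_r$ built into $\CB_{\mu,\lambda}$, matching the target. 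The main obstacle is the twist identification in Step 3: verifying $\epsilon_{w\cdot\mu}=w\circ\epsilon_\mu$ and interpreting this discrepancy as the bimodule $R_w$. Once that is in place, the rest is direct bookkeeping via monoidality and the translation-bimodule computation already established.
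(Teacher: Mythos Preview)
Your proposal is correct and follows essentially the same approach as the paper: apply monoidality to split $P^{w,I}_\dag$ into a tensor product of translation-bimodule images, compute each factor via Proposition \ref{prop image of complete tranbimod}, and then account for the change of identification from $\epsilon_{w\cdot\mu}$ to $\epsilon_\mu$ to produce the twist $R_w$. You are actually more explicit than the paper on two points it leaves to the reader: the verification that $\epsilon_{w\cdot\mu}=w\circ\epsilon_\mu$ (which is exactly what ``replacing the identification'' means), and the decomposition of the $C(G)$-character $w\cdot\mu-\lambda+\Lambda_r$ as $\tau(w)$ plus the global shift $\mu-\lambda+\Lambda_r$ built into $\CB_{\mu,\lambda}$.
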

\begin{proof}
Using Proposition \ref{prop image of complete tranbimod}, if we use the identifications $Z^{\wedge_\chi} \overset{\epsilon_{w\cdot \mu}}{\cong} R^{W_{w\cdot \mu}}$ and $Z^{\wedge_{\chi'}}\overset{\epsilon_\lambda}{\cong} R^{W_\lambda}$, we can see that $P^{w,I}_\dag$ is isomorphic to :
\[ R\otimes_R (R\otimes_{R^{s_{i_1}}} R)\otimes_R \dots \otimes_R (R\otimes_{R^{s_{i_l}}} R)\otimes_R R\]
as $(R^{W_{w\cdot \mu}}, R^{W_\lambda})$-bimodules. So if we replace the identification $Z^{\wedge_\chi}\overset{\epsilon_{w\cdot \mu}}{\cong} R^{W_{w\cdot \mu}}$ by the identification $Z^{\wedge_\chi}\overset{\epsilon_\mu}{\cong}R^{W_\mu}$, we see that $P^{w, I}_\dag$ is isomorphic to \eqref{image of P object} as $(R^{W_\mu}, R^{W_\lambda})$-bimodules.

On the other hand, since $P^{w, I}$ is the direct summand of the bimodule:
\[(L(w\cdot \mu-\nu)\otimes L(\nu-\nu_{i_1})\otimes L(\nu_{i_1}-\nu)\otimes\dots \otimes L(\nu_{i_l}-\nu)\otimes L(\nu-\lambda)\otimes U(\g))\otimes_{\CZ}\CZ^{\chi, \chi'},\]
the group $C(G)$ acts on $P^{w,I}$ by the character corresponding to the coset $w\cdot \mu-\lambda +\Lambda_r$, hence $C(G)$ also acts on $P^{w,I}_\dag$ by that character.  
\end{proof}
The next two lemmas concern the homological properties of the category $\HC(U^{\chi, \chi'})$.
\begin{Lem}\label{cor20} Let $\mu,\lambda$ be two dominant weights such that $\mu\in \chi, \lambda \in\chi', \mu-\lambda\in \Lambda$ and $W_{\lambda}\subset W_\mu$. Then:
\[ P^{\mu, \lambda}\otimes_{U^{\chi'}} P^{\lambda, \mu} \cong (U^{\chi})^{\oplus|W_\mu/W_\lambda|}.\]
\end{Lem}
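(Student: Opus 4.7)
The plan is to apply the functor $\bullet_\dag$ and exploit its monoidality (Proposition~\ref{prop monoidal}) together with its full faithfulness on projectives (Proposition~\ref{prop ff of complete proj}) to reduce the statement to an isomorphism of bimodules that can be verified by invariant theory. First, I would compute both sides under $\bullet_\dag$: by monoidality and two applications of part~\ref{Thm1c} of Theorem~\ref{Thm1} (once applied directly to $P^{\mu,\lambda}$, and once with $\mu$ and $\lambda$ interchanged to handle $P^{\lambda,\mu}$, both valid under $W_\lambda\subset W_\mu$),
\[
(P^{\mu,\lambda}\otimes_{U^{\chi'}}P^{\lambda,\mu})_\dag
\cong Z^{\wedge_{\chi'}}\otimes_{Z^{\wedge_{\chi'}}}Z^{\wedge_{\chi'}}\cong Z^{\wedge_{\chi'}},
\]
with both $Z^{\wedge_\chi}$-actions factoring through the inclusion in~\eqref{eq Z inclusion}, while $((U^\chi)^{\oplus|W_\mu/W_\lambda|})_\dag\cong (Z^{\wedge_\chi})^{\oplus|W_\mu/W_\lambda|}$. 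After identifying $Z^{\wedge_\chi}\cong R^{W_\mu}$ and $Z^{\wedge_{\chi'}}\cong R^{W_\lambda}$ via $\epsilon_\mu,\epsilon_\lambda$, it suffices to prove $R^{W_\lambda}\cong(R^{W_\mu})^{\oplus[W_\mu:W_\lambda]}$ as $R^{W_\mu}$-bimodules.

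The latter is standard invariant theory. By Steinberg's theorem $W_\mu$ and $W_\lambda$ are both reflection subgroups of $W$, so by Chevalley--Shephard--Todd $R$ is free of rank $|W_\mu|$ over $R^{W_\mu}$ and of rank $|W_\lambda|$ over $R^{W_\lambda}$. The Reynolds operator $\tfrac{1}{|W_\lambda|}\sum_{w\in W_\lambda}w$ gives an $R^{W_\mu}$-linear retraction of the inclusion $R^{W_\lambda}\hookrightarrow R$, exhibiting $R^{W_\lambda}$ as a finitely generated projective, hence free, module over the complete local ring $R^{W_\mu}$; comparing ranks yields rank $[W_\mu:W_\lambda]$. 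Commutativity then promotes this to a bimodule isomorphism, and the $\mathfrak{X}$-grading matches since the $C(G)$-characters of $P^{\mu,\lambda}$ and $P^{\lambda,\mu}$ cancel.

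Finally, I would lift this back to $\HC(U^{\chi,\chi})$ via Proposition~\ref{prop ff of complete proj}. Both sides must be shown projective: $(U^\chi)^{\oplus|W_\mu/W_\lambda|}$ is immediate since $U^\chi=P^{\chi,\chi}_{\C}$, and for $P^{\mu,\lambda}\otimes_{U^{\chi'}}P^{\lambda,\mu}$ I would localize the standard isomorphism $V\otimes U(\g)\otimes_U V'\otimes U(\g)\cong(V\otimes V')\otimes U(\g)$ in $\HC(U)$ and decompose by the intermediate central character to obtain $P^{\chi,\chi}_{V\otimes V'}\cong\bigoplus_{\chi_1}P^{\chi,\chi_1}_V\otimes_{U^{\chi_1}}P^{\chi_1,\chi}_{V'}$ (a finite sum by Remark~\ref{rem support of HCbim}), so that our tensor product is a direct summand of $P^{\chi,\chi}_{L(\mu-\lambda)\otimes L(\lambda-\mu)}$ and hence projective. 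With both sides projective, the bimodule isomorphism and its inverse lift to morphisms $f,g$ in $\HC(U^{\chi,\chi})$; since $(fg)_\dag$ and $(gf)_\dag$ are the identity and $\bullet_\dag$ is faithful on endomorphisms of projectives, $fg$ and $gf$ are the identity, finishing the proof. The main obstacle will be carefully justifying the intermediate-central-character decomposition used for projectivity; the rest is a formal consequence of the machinery already developed.
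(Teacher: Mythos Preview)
Your proposal is correct and follows essentially the same approach as the paper: compute both sides under $\bullet_\dag$ using monoidality and Theorem~\ref{Thm1}\ref{Thm1c}, identify the result with $(R^{W_\mu})^{\oplus[W_\mu:W_\lambda]}$, and then lift via full faithfulness on projectives. You are in fact more careful than the paper, which leaves implicit both the invariant-theoretic freeness of $R^{W_\lambda}$ over $R^{W_\mu}$ and the projectivity of $P^{\mu,\lambda}\otimes_{U^{\chi'}}P^{\lambda,\mu}$; your direct-summand argument via the intermediate-central-character decomposition $P^{\chi,\chi}_{V\otimes V'}\cong\bigoplus_{\chi_1}P^{\chi,\chi_1}_V\otimes_{U^{\chi_1}}P^{\chi_1,\chi}_{V'}$ is a clean way to handle the latter.
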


\begin{proof}We have $(P^{\mu, \lambda}\otimes_{U^{\chi'}}P^{\lambda, \mu})_\dag\cong (R^{W_\mu})^{\oplus |W_\mu/W_\lambda|}$ and $U^{\chi}_\dag \cong R^{W_{\mu}}$. On the other hand, the functor $\bullet_\dag: \HC(U^{\chi, \chi}) \rightarrow (R^{W_\mu}, R^{W_\mu})\bim^{C(G)}$ is fully faithful on projective objects. Hence, the lemma follows.
\end{proof}

\begin{Lem}\label{lem: fin-proj-resol} Any object in $\HC(U^{\chi, \chi'})$ has a finite resolution  by projective objects.
\end{Lem}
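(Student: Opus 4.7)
The plan is to reduce to the case of a regular second central character by means of translation bimodules, and then invoke the finite global dimension of the deformed category $\hat{O}$. Since $\HC(U^{\chi,\chi'})$ vanishes unless $\chi$ and $\chi'$ are compatible by Remark \ref{rem HCbim is zero}, I may fix dominant weights $\mu\in\chi$ and $\lambda\in\chi'$ with $\mu-\lambda\in\Lambda$, and choose by Lemma \ref{subgeneric weights} a regular dominant weight $\nu\in\lambda+\Lambda$, so that $W_\nu=\{1\}\subset W_\lambda$.

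First, I would apply Lemma \ref{cor20} to the pair $(\lambda,\nu)$, obtaining
\[ P^{\lambda,\nu}\otimes_{U^{\chi_\nu}} P^{\nu,\lambda}\cong (U^{\chi'})^{\oplus |W_\lambda|}. \]
In particular $U^{\chi'}$ is a direct summand, so for any $M\in\HC(U^{\chi,\chi'})$ the object $M$ is a direct summand of $N:=M\otimes_{U^{\chi'}} P^{\lambda,\nu}\otimes_{U^{\chi_\nu}} P^{\nu,\lambda}$ in $\HC(U^{\chi,\chi'})$. Hence it suffices to produce a finite projective resolution of $N$ and then pass to the direct summand $M$.

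Second, set $M_1 := M\otimes_{U^{\chi'}} P^{\lambda,\nu}\in\HC(U^{\chi,\chi_\nu})$. Since $\nu$ is regular, Proposition \ref{prop number of complete HCbim}.b) gives an equivalence $\HC(U^{\chi,\chi_\nu})\cong \bigoplus_{\mu'} \hat{O}_{\mu'}$, and Lemma \ref{prop finite homological O} guarantees that every object in each block $\hat{O}_{\mu'}$ admits a finite resolution by projective objects. Transporting such a resolution $Q_\bullet\to M_1$ back to $\HC(U^{\chi,\chi_\nu})$ and applying the functor $(-)\otimes_{U^{\chi_\nu}} P^{\nu,\lambda}$ yields a complex in $\HC(U^{\chi,\chi'})$ whose only homology is $N$. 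Exactness of this functor follows from Corollary \ref{prop of HCchi}.b), because $P^{\nu,\lambda}=P^{\chi_\nu,\chi'}_{L(\nu-\lambda)}$ is projective as a left $U^{\chi_\nu}$-module; preservation of projectivity follows from the natural identification $P^{\chi,\chi_\nu}_V\otimes_{U^{\chi_\nu}} P^{\chi_\nu,\chi'}_{V'}\cong P^{\chi,\chi'}_{V\otimes V'}$ coming from the monoidal structure on $\HC(U)$ and the compatibility of the completions $\CZ^{\chi,\chi'}$.

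Finally, since $M$ is a direct summand of $N$ and $N$ has finite projective dimension, so does $M$; combined with the existence of enough projectives in $\HC(U^{\chi,\chi'})$ (Corollary \ref{prop of HCchi}), this yields a finite projective resolution of $M$ by the usual inductive construction. The main technical point to verify carefully is that convolution of projective Harish-Chandra bimodules is again projective, i.e.~that the distinguished projectives $P^{\chi,\chi'}_V$ are closed under the monoidal tensor product of Section \ref{ssec:  complete HCbim}; once this compatibility is established, the rest of the argument is a routine assembly of the exactness, projectivity and direct-summand facts above.
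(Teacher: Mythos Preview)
Your argument is correct and follows essentially the same strategy as the paper: reduce to a regular central character via translation bimodules and Lemma~\ref{cor20}, then invoke finite projective dimension in $\hat O$ through Proposition~\ref{prop number of complete HCbim} and Lemma~\ref{prop finite homological O}. The only cosmetic difference is that you tensor on the right with $P^{\lambda,\nu}\otimes P^{\nu,\lambda}$ (making the \emph{right} character regular), whereas the paper tensors on the left with $P^{\mu,\nu}\otimes P^{\nu,\mu}$ (making the \emph{left} character regular and hence invoking the $\hat O^r$ variant); your choice has the minor advantage of using Proposition~\ref{prop number of complete HCbim}(b) exactly as stated.
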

\begin{proof} First, let us assume $\chi'$ is regular. Then the lemma follows by Lemma \ref{prop finite homological O} and Proposition \ref{prop number of complete HCbim}. The proof for the case when $\chi$ is regular is the same by replacing the role of category $\hat{O}$ by the category $\hat{O}^r$, the analog  of $\hat{O}$ for right $U_R(\g)$-modules.

We now prove the lemma in general. Let $\nu$ be the regular dominant weight such that $\nu \in \lambda+  \Lambda$. By Corollary $\ref{cor20}$, we have:
\Eq{\label{eq32} P^{\mu, \nu}\otimes_{U^{\chi_\nu}}P^{\nu, \mu} \cong (U^{\chi_\nu})^{\oplus|W_{\mu}|}.}
Hence, for any $M\in \HC(U^{\chi, \chi'})$, we have:
\[ M^{\oplus |W_\mu|} \cong P^{\mu, \nu}\otimes_{U^{\chi_\nu}}P^{\nu, \mu}\otimes_{U^\chi} M.\]
Note that the  functor $P^{\mu, \nu}\otimes_{U^{\chi_\nu}}\bullet: \HC(U^{\chi_\nu, \chi'}) \rightarrow \HC(U^{\chi,\chi'})$ is exact and maps projective objects to projective objects. Therefore, $M^{\oplus|W_\mu|}$  has a finite homological dimension, hence, so does $M$.
\end{proof}
We will need the following categorical lemma in the proof of Theorem $\ref{Thm4}$.

\begin{Lem} \label{categorical lem}
    Let $\CA$ and $\mathcal{C}$ be two Karoubian additive categories. Let $F: \CA\rightarrow \mathcal{C}$ be an additive functor such that: 
    \begin{itemize}
        \item $F$ is fully faithful.
        \item For any $C\in \mathcal{C}$, there are $A\in \CA$ and $D\in \mathcal{C}$ such that $F(A)\cong C\oplus D$.
    \end{itemize}
    Then $F$ is an equivalence of Karoubian additive categories.
\end{Lem}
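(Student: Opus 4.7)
The plan is to show that $F$ is essentially surjective; combined with full faithfulness this will give an equivalence of categories.

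Fix an object $C \in \mathcal{C}$. By the second hypothesis, choose $A \in \mathcal{A}$ and $D \in \mathcal{C}$ together with an isomorphism $\phi: F(A) \xrightarrow{\cong} C \oplus D$. Let $e \in \textnormal{End}_{\mathcal{C}}(F(A))$ be the idempotent obtained by composing $\phi$ with the projection $C \oplus D \twoheadrightarrow C$, the inclusion $C \hookrightarrow C \oplus D$, and $\phi^{-1}$. Because $F$ is fully faithful, there is a unique morphism $\tilde{e}: A \to A$ with $F(\tilde{e}) = e$, and the identities $F(\tilde{e}^2) = F(\tilde{e})^2 = e^2 = e = F(\tilde{e})$ combined with faithfulness force $\tilde{e}^2 = \tilde{e}$.

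Next I would invoke the assumption that $\mathcal{A}$ is Karoubian: the idempotent $\tilde{e}$ splits, giving a decomposition $A \cong A_1 \oplus A_2$ in $\mathcal{A}$ such that $\tilde{e}$ corresponds to the projection onto $A_1$. Applying the additive functor $F$, we get $F(A) \cong F(A_1) \oplus F(A_2)$ with $F(\tilde{e}) = e$ identified with the projection onto $F(A_1)$. On the other hand, via $\phi$, the idempotent $e$ also splits as the projection of $C \oplus D$ onto $C$. Since any two splittings of the same idempotent in an additive category are canonically isomorphic (the image object is determined up to unique isomorphism by its universal property as the equalizer of $\tilde{e}$ and $\mathrm{id}$, or equivalently by its description as a direct summand), we conclude $F(A_1) \cong C$ in $\mathcal{C}$.

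This proves essential surjectivity of $F$, and since $F$ is fully faithful by hypothesis, it is an equivalence of categories. There is no real obstacle here; the only subtlety to be careful about is ensuring that the lifted morphism $\tilde{e}$ is genuinely an idempotent (handled by faithfulness) and that splittings of idempotents are unique up to unique isomorphism, both of which are standard facts about Karoubian additive categories.
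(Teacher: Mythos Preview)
Your proof is correct and is the standard argument for this elementary categorical fact. The paper itself does not supply a proof of this lemma, stating it without justification; your argument fills that gap exactly as one would expect, lifting the idempotent via full faithfulness and splitting it in $\mathcal{A}$ using the Karoubian hypothesis.
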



\begin{proof}[Proof of Theorem $\ref{Thm4}$] The images of objects in $\CP$ under the functor $\bullet_\dag$ are contained in the category $\CB_{\mu, \lambda}$. So by Proposition \ref{prop ff of complete proj}, we have a fully faithful functor between two Karoubian additive categories $\bullet_\dag: \CP\rightarrow \CB_{\mu, \lambda}$. It is easy to see  that the functor $\bullet_\dag: \CP\rightarrow \CB_{\mu, \lambda}$ satisfies the conditions in Lemma \ref{categorical lem}. Therefore, we have an equivalence of Karoubian additive categories:
\Eq{\label{eq21} \CP\cong \CB_{\mu, \lambda}.}

By Lemma \ref{cor number of singular Sorgel bimod},  the number of indecomposable objects in $\CP$ is greater than or equal to  the number of  double cosets $W_\mu\backslash \widehat{W}_{[\lambda]}/W_\lambda$. The latter number is equal to the number of indecomposable projective objects in $\HC(U^{\chi, \chi'})$ by Proposition \ref{prop number of complete HCbim}. Therefore, the number of indecomposable objects in $\CP$ is equal to the number of indecomposable projective objects in $\HC(U^{\chi, \chi'})$. This implies that $\CP$ must be the full subcategory  consisting of all projective objects in  $\HC(U^{\chi, \chi'})$. 

On the other hand, by Lemma \ref{lem: fin-proj-resol}, any object in $\HC(U^{\chi, \chi'})$ has a finite resolution by projective objects. Therefore, 
\[D^b(\HC(U^{\mu,\lambda}))\cong K^b(\CP) .\]
Combining with the equivalence (\ref{eq21}), we have:  
\[ D^b(\HC(U^{\chi, \chi'}))\cong K^b(\CB_{\mu, \lambda})=: \widehat{\CH}.\]
\end{proof}

\begin{Cor}\label{Cor: 2rel of Soergel cat} Proposition \ref{lem:2rel Soergel cat} holds.
\end{Cor}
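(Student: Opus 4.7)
The plan is to derive both parts of the proposition from the proof of Theorem~\ref{Thm4}, combined with the lower bound of Lemma~\ref{cor number of singular Sorgel bimod}.

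For part (b), recall that in the proof of Theorem~\ref{Thm4} we established an equivalence of Karoubian additive categories $\CP \cong \CB_{\mu,\lambda}$ via $\bullet_\dag$, and moreover identified $\CP$ with the full subcategory of all projective objects of $\HC(U^{\chi,\chi'})$. Combining this with Proposition~\ref{prop number of complete HCbim}a, which says that indecomposable projectives in $\HC(U^{\chi,\chi'})$ biject with the double cosets $W_\mu \backslash \widehat{W}_{[\lambda]}/W_\lambda$, immediately yields part (b).

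For part (a), the same equivalence $\CP \cong \CB_{\mu,\lambda}$ tells us that $\CB_{\mu,\lambda}$ is the Karoubian closure of the images $P^{w,I}_\dag$, i.e.\ of the bimodules \eqref{image of P object}. The heart of the proof is to show that each bimodule \eqref{sing Soergel bimod 2} is a direct summand of the restriction of some object \eqref{singular Soergel bimod} (viewed as an $R$-bimodule) to an $(R^{W_\mu},R^{W_\lambda})$-bimodule, so that the Karoubian closure of \eqref{sing Soergel bimod 2} is contained in $\CB_{\mu,\lambda}$. To carry this out, I would first use the isomorphism $R_w \cong R^{I_w} \otimes_{R^{I_w}} R$ of $(R^{W_\mu}, R)$-bimodules (where the left $R^{W_\mu}$-action on $R^{I_w}$ is the twisted one defining the bimodule $R^{I_w}$) to rewrite \eqref{singular Soergel bimod} as $R^{I_w} \otimes_{R^{I_w}} R \otimes_{R^{J_1}} R^{I_2} \otimes \ldots \otimes R$. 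Then, since by Chevalley--Shephard--Todd the algebra $R$ is graded free over $R^{W_I}$ for every finite reflection subgroup $W_I$, both $R^{I_w}$ and $R^{W_\lambda}$ sit as graded direct summands of $R$ at the two ends of the expression; splitting them off exhibits \eqref{sing Soergel bimod 2} as a direct summand after restriction.

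Once this containment is in hand, part (a) follows from a counting argument: the Karoubian closure of the bimodules \eqref{sing Soergel bimod 2} contains at least $\#(W_\mu \backslash \widehat{W}_{[\lambda]}/W_\lambda)$ isomorphism classes of indecomposables by Lemma~\ref{cor number of singular Sorgel bimod}, while $\CB_{\mu,\lambda}$ contains exactly this many by part~(b). Since the former sits inside the latter, equality of the two Karoubian subcategories must hold. The main obstacle is the invariant-theoretic direct-summand extraction in the middle step; this is routine in the singular Soergel bimodule formalism of \cite{GW}, but requires careful bookkeeping of the twist $w \in C$ that distinguishes this setting from the purely integral case treated in \cite{So92}.
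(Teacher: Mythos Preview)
Your argument is correct and follows the same route as the paper: part~(b) via the equivalence $\CP\cong\CB_{\mu,\lambda}$ together with Proposition~\ref{prop number of complete HCbim}, and part~(a) by the counting argument matching the lower bound from the proof of Lemma~\ref{cor number of singular Sorgel bimod} against the exact count from part~(b). The paper's own proof is terser because it simply quotes the assertion, made just before Proposition~\ref{lem:2rel Soergel cat}, that the bimodules \eqref{sing Soergel bimod 2} already lie in $\CB_{\mu,\lambda}$; you instead supply a direct justification of this containment via the splitting of $R^{I_w}$ (resp.\ $R^{W_\lambda}$) out of $R$ as an $(R^{I_w},R^{J_1})$-bimodule (resp.\ $(R^{J_{n-1}},R^{W_\lambda})$-bimodule), which is a welcome clarification. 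One cosmetic point: you invoke ``graded'' freeness and ``graded'' direct summands, but $R=\C[\fh^*]^{\wedge_0}$ is a completion and carries no grading; the splitting you need is obtained instead by the averaging projection $f\mapsto |W_I|^{-1}\sum_{s\in W_I}s(f)$, which is bilinear for the relevant invariant subrings and works equally well after completion.
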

\begin{proof} By $\CP \cong \CB_{\mu, \lambda}$, the number of indecomposable objects in $\CB_{\mu, \lambda}$ is equal to the number of double cosets $W_\mu \backslash \widehat{W}_{[\lambda]} /W_\lambda$, so part $b)$ holds. Therefore, we can produce all indecomposable objects in $\CB_{\mu, \lambda}$ by taking direct summands of objects of the form \eqref{sing Soergel bimod 2}, hence, part $a)$ holds.
\end{proof}

\printbibliography

\end{document}